\newtheorem{theorem}{Theorem}[section]
\newtheorem{lemma}[theorem]{Lemma}
\newtheorem{proposition}[theorem]{Proposition}
\theoremstyle{definition}
\newtheorem{definition}[theorem]{Definition}
\theoremstyle{notation}
\newtheorem{notation}[theorem]{Notation}
\newtheorem{example}[theorem]{Example}
\newtheorem{terminology}[theorem]{Terminology}
\theoremstyle{remark}
\newtheorem{remark}[theorem]{Remark}
\numberwithin{equation}{section}
\numberwithin{equation}{subsection}
\newcommand{\be}%
  {\protect\setcounter{equation}{\value{subsubsection}}}
  \newcommand{\ee}%
   {\protect\setcounter{subsubsection}{\value{equation}}}
\def \rma{\rm a}
\def \rmA{\rm A}
\def \cB{\mathcal B}
\def \obeta{\overset o \beta}
\def \BG{\rm BG}
\def \EG1{\rm EG}
\def \BN(T){\rm BN_{G}(T)}
\def \rmB{\rm B}
\def \BH{\rm BH}
\def \rmC{\rm C}
\def \C{\mathcal C}
\def \Cl{\mathbb C}
\def \colim{\underset \rightarrow  {\hbox {lim}}}
\def \colimm{\underset {m \rightarrow \infty}  {\hbox {lim}}}
\def \colimalpha{\underset {\alpha}  {\hbox {colim}}}
\def \colimK.{\underset {\underset K^.  \rightarrow}  {\hbox {lim}}}
\def \colimU.{\underset {\underset U_.  \rightarrow}  {\hbox {lim}}}
\def \compl{\, \, {\widehat {}}}
\def \D{\mathcal D}
\def \rmD{\rm D}
\def \rmd{\rm d}
\def \DS1X{\rm {DS^1X}}
\def \rmD{\rm D}
\def \cE{\mathcal E}
\def \EG1{E{(G \times {\mathbb C}^*)}{\underset {G\times {\mathbb C}^*} 
\times}}
\def \EZ(s)1{E{(Z(s) \times {\mathbb C}^*)}{\underset {(Z(s)\times {\mathbb
C}^*)}  \times}}
\newcommand{\eps}{ \, {\boldsymbol\varepsilon} \,}
\def \EM(u){EM(u){\underset {M(u)}  \times}}
\def \EM(us){EM(u,s){\underset {M(u, s)}  \times}}
\def \EG{\rm EG}
\def \rmE{\rm E}
\def \EH{\rm EH}
\def \et{\rm et}
\def \F{\mathcal F}
\def \rmf{\rm f}
\def \rmF{\rm F}
\def \group{\rm G}
\def \rmG{\rm G}
\def \GL{\rm {GL}}
\def \rmg{\rm g}
\def\holimD{\mathop{\textrm{holim}}\limits_{\Delta }}
\def \H{\mathbb H}
\def \rmH{\rm H}
\def \Hom{\underline {Hom}}
\def \Hom{{\mathcal H}om}
\def \rmh{\rm h}
\def \invlimn{\underset {\infty \leftarrow n}  {\hbox {lim}}}
\def \invlim1{\underset {\infty \leftarrow q}  {\hbox {lim}}^1}
\def \oI{\rm I}
\def \rmI{\rm I}
\def \oJ{\rm J}
\def \rmJ{\rm J}
\def \rmK{\rm K}
\def \k{\it k}
\def \L3{\Lambda \times \Lambda \times \Lambda}
\def \L2{\Lambda \times \Lambda}
\def \lim{\underset \leftarrow  {\hbox {lim}}}
\def \limm{\underset {\infty \leftarrow m}  {\hbox {lim}}}
\def \invlimn{\underset {\infty \leftarrow n}  {\hbox {lim}}}
\def \longright2arrow{{\overset \longrightarrow  {\overset {} 
\longrightarrow}}}
\def \L{L\times \Cl ^*}
\def \rmL{\rm L}
\def \ellh{\rm \ell.h}
\def \rmM{\rm M}
\def \cN{\mathcal N}
\def \N(T){\rm {N_{G}(T)}}
\def \rmN{\rm N}
\def \Nis{\rm {Nis}}
\def \rmp{\rm p}
\def \rmP{\rm P}
\def \Spt{\rm {Spt}}
\def \rmQ{\rm Q}
\def \rmq{\rm q}
\def \ra{\rightarrow}
\def \RG^{R(G)^{\hat {}}\ }
\def \resp{respectively}
\def \res{respectively}
\def \rmS{\rm S}
\def \RHom{{{\mathcal R}{\mathcal H}om}}
\def \R{{\mathcal R}}
\def \rmR{\rm R}
\def \Sm{\rm {Sm}}
\def \Speck{{\rm {Spec}}\, {\it k}}
\def\Spt{\rm {\bf Spt}}
\def \Sph{\rm {Sph}}
\def \SH{{\mathcal S}{\mathcal H}}
\def\Spt{\rm {\bf Spt}}
\def\Spc{\rm {\bf Spc}}
\def \mbS{\mathbb S}
\def \rmS{\rm S}
\def \rms{\rm s}
\def \topGcoh*{^{top, *} _{G}}
\def \topGho*{ _{top,*} ^{G}}
\def \T{{\mathbf T}}
\def \tr{\it {tr}}
\def \rmT{\rm T}
\def \Th{\rm Th}
\def \rmU{\rm U}
\def \U{\mathcal U}
\def \rmV{\rm V}
\def \W{\mathbf W}
\def \rmW{\rm W}
\def \rmX{\rm X}
\def \itX{\it X}
\def \X{\mathcal X}
\def \Y{\mathcal Y}
\def \rmY{\rm Y}
\def \Z(s){Z(s) \times {\mathbb C}^*}
\def \Z{\mathcal Z}
\def \bZ{\mathbb Z}
\def \rmZ{\rm Z}
\def \Zar{\rm Zar}
\def\Zl{{\mathbb Z}/\ell}
\begin{document}

\title[The Motivic and \'Etale Becker-Gottlieb transfer: the construction]{The Motivic and \'Etale Becker-Gottlieb transfer: the construction of the transfer}
\author{Gunnar Carlsson}
\address{Department of Mathematics, Stanford University, Building 380, Stanford,
California 94305}
\email{gunnar@math.stanford.edu}
\thanks{  }  
\author{Roy Joshua}
\address{Department of Mathematics, Ohio State University, Columbus, Ohio,
43210, USA}
\email{joshua@math.ohio-state.edu}
\thanks{}


\thanks{2010 AMS Subject classification: 14F20, 14F42, 14L30.\\ \indent The  authors were supported by  grants
from the NSF at various stages on work on this paper. The second author would also like to thank the Isaac Newton Institute for Mathematical Sciences, Cambridge, for support and hospitality during the programme {\it K-Theory, Algebraic Cycles and Motivic Homotopy Theory} where part of the work on this paper was carried out. This work was also supported by EPSRC grant no EP/R014604/1 and by the Simons Foundation.}
\begin{abstract} The main goal of the paper is the construction of a variant of the Becker-Gottlieb transfer in the motivic and \'etale frameworks. 
This needs considerable work in equivariant motivic and \'etale homotopy theory, equivariant for the action of linear algebraic groups, which is discussed
in the first half of the paper.
\end{abstract}
\maketitle

\centerline{\bf Table of contents}
\vskip .2cm 
1. Introduction
\vskip .2cm
2. Basic Model category framework for simplicial presheaves
\vskip .2cm
3. Model structures for equivariant simplicial presheaves
\vskip .2cm
4. Categories of spectra
\vskip .2cm
5. Model structures for categories of spectra
\vskip .2cm
6. Comparison of model structures
\vskip .2cm
7. Spanier-Whitehead duality in the motivic and \'etale framework
\vskip .2cm
8. Construction of the motivic and \'etale transfer
\vskip .2cm
9. Appendix: Spherical fibrations and Thom spaces
\input xypic
\vfill \eject
\section{\bf Introduction}
We begin with the following example to motivate the discussion in the present paper. One knows from the work of Atiyah (see \cite{At}) that 
for a compact manifold $\rmM$ without boundary, the suspension spectrum of the Thom-space of the normal bundle $\nu$ to imbedding $\rmM$ in a large dimensional Euclidean 
space is a Spanier-Whitehead dual to the suspension spectrum of $\rmM_+$ modulo a certain shift. Now assume that the manifold $\rmM$ is provided
with the action of a compact Lie group $\rmG$. Then one may find a $\rmG$-equivariant imbedding of $\rmM$ in a large dimensional Euclidean $\rmV= {\mathbb R}^{\rm N}$
space with a $\rmG$-action. Now the normal bundle $\nu$ and its Thom-space inherit $\rmG$-actions. The resulting Thom-Pontrjagin collapse
map from the sphere ${\rm TP}:\rmS(\rmV \oplus 1) \ra {\rm Th}(\nu)$ shows that the $\rmG$-suspension spectrum of ${\rm Th}(\nu)$ will
be a Spanier-Whitehead dual to the $\rmG$-suspension spectrum of $\rmM_+$ (again modulo certain shifts). Observe that the map ${\rm TP}$ followed by the 
diagonal map ${\rm Th}(\nu) \ra \rmM_+ \wedge {\rm Th}(\nu)$ then
provides a $\rmG$-equivariant pre-transfer map which is then fed into the Borel construction to obtain the  first step in the construction
of the classical Becker-Gottlieb transfer. (See \cite[section 3]{BG75}.)
\vskip .2cm
The point we want to emphasize here is that the Spanier-Whitehead dual used in the above construction of the transfer 
is the $\rmG$-suspension spectrum of $\Th(\nu)$, so that on forgetting the $\rmG$-action we obtain the Spanier-Whitehead dual in the
non-equivariant setting. In other words, the Spanier-Whitehead dual used in the construction of the transfer is the same Spanier-Whitehead dual
in the non-equivariant setting, but made equivariant simply by making the imbedding of $\rmM$ in a large Euclidean space $\rmG$-equivariant.
{\it One of our goals in this paper} is to set up a suitable  framework so that the Spanier -Whitehead dual of the suspension spectra of simplicial presheaves
in the motivic and \'etale setting provided with $\rmG$-action, reduce to the corresponding non-equivariant Spanier-Whitehead dual on
forgetting the $\rmG$-action. We also would like to point out that this approach seems essential, at least for now in the motivic setting, since
no variant of Gabber's refined alterations exists that is compatible with a group action.
\vskip .2cm
At the same time, since no account of equivariant (stable) motivic homotopy theory for actions of all linear algebraic groups
on smooth schemes exists yet in this generality, (analogous to the discussion in \cite{LMS} for the topological situation), we need to set up 
the basic machinery in place. One may observe that when the groups involved are finite discrete groups, it is possible to incorporate
the corresponding equivariant homotopy theory into the theory of motivic symmetric spectra: see \cite{DRO2}.
This approach fails when the groups are no longer finite. 
\vskip .2cm
Throughout the paper, we will work with {\it smooth schemes of finite type over a given field $k$, which we refer to 
 as the base field}.
 The following is {\it the basic framework} we adopt throughout the paper:
 \vskip .1cm
 {\bf Basic framework adopted throughout the paper.}
 \label{basic.framework}
 \vskip .1cm \noindent
 {\it Basic assumptions on the base field}. 
\begin{enumerate}
\label{basic.assumpts.field}
 \item A {\it standing assumption throughout} is that the base field $k$ is a {perfect field of arbitrary characteristic.} \index{perfect field}
 \item When considering actions by linear algebraic groups $\rmG$
  that are {\it not special}, we {\it will also assume the base field is  infinite} to prevent certain
unpleasant situations. 
\item On considering \'etale realizations of the transfer, it is important to assume
 that the base field \index{finite cohomological dimension}
 \be \begin{multline}
   \label{etale.finiteness.hyps}
    \begin{split}
 k \mbox{ has finite } \ell-\mbox{cohomological dimension,  for } \ell \ne char(k) \mbox{ and satisfies the finiteness conditions  that}\\
 	 \rmH^n_{et}(Spec \, {\it k}, {\mathbb Z}/\ell^{\nu}) \mbox{ is finitely generated in each degree } n \mbox{ and vanishes for all } n>> {\rm 0}, \mbox { all } \nu > 0. \\
 	 \mbox{(Such an assumption is not needed on dealing with the motivic transfer alone.)} 
   \end{split}
 \end{multline} \ee
 \vskip .1cm \noindent
One should be able to see that such an assumption is necessary to get any theory of Spanier-Whitehead duality on the \'etale site of $\Speck$.
\end{enumerate}
\vskip .2cm
{\it Basic assumptions on the linear algebraic groups} considered: 
\begin{enumerate}
\label{basic.assmpts.group}
 \item 
we allow any linear algebraic group over $k$, {\it irrespective} of whether 
 it is {\it connected or not} and 
\item
 we are { not assuming it is special in the sense of Grothendieck (see \cite{Ch}). This means, in particular,  we allow groups such
 as all orthogonal groups and finite groups, which are all known to be non-special.}
\end{enumerate}

\vskip .2cm
\begin{definition} 
\label{Zl.local}
Let $\rmM \in \SH(\k)$ ($\SH(\k_{et}$). For each prime number $\ell$, let ${\mathbb Z}_{(\ell)}$  denote the localization of the integers at the prime ideal $\ell$ and let ${\mathbb Z}\compl_{\ell} =\invlimn {\mathbb Z}/\ell^n$.
Then we say $\rmM$ is ${\mathbb Z}_{(\ell)}$-local ($\ell$-complete, $\ell$-primary torsion), if each $[{\rmS^1}^{\wedge s} \wedge \T^t \wedge {\Sigma^{\infty}_{\T}}\rmU_+, \rmM]$ is a ${\mathbb Z}_{(\ell)}$-module (${\mathbb Z}\compl_{\ell}$-module, ${\mathbb Z}\compl_{\ell}$-module which is torsion, \res) as $\rmU$ varies among the objects of the given site, and where 
$[{\rmS^1}^{\wedge s}\wedge \T^t \wedge {\Sigma^{\infty}_{\T}}\rmU_+, \rmM]$
 denotes $Hom$ in the stable homotopy category $\SH(\k)$ ($\SH(\k_{et})$, \res). \index{$\ell$-complete spectra} \index{${\mathbb Z}_{(\ell)}$-local spectra}
\end{definition}
Let $\rmM \in \SH(\k)$ ($\SH(\k_{et})$). Then one may observe that if $\ell$ is a prime number, and $\rmM$ is $\ell$-complete, then $\rmM$ is ${\mathbb Z}_{(\ell)}$-local. This follows readily by observing that the natural map ${\mathbb Z} \ra {\mathbb Z}\compl_{\ell}$ factors through ${\mathbb Z}_{(\ell)}$ since
 every prime different from $\ell$ is inverted in ${\mathbb Z}\compl_{\ell}$. One may also observe that if $\cE$ is a commutative ring spectrum which
is ${\mathbb Z}_{(\ell)}$-local ($\ell$-complete), then any module spectrum $\rmM$ over $\cE$ is also ${\mathbb Z}_{(\ell)}$-local 
($\ell$-complete, \res). $\ell$-completion in the motivic framework is discussed in some detail in \cite[Appendix]{CJ23-T2}.
\vskip .2cm
We summarize the results on dualizability in the following Theorem.
\begin{theorem}(Dualizability)
\label{duality}
 \begin{enumerate}[\rm(i)]
  \item Over a field $\k$ of characteristic $0$, the $\T$-suspension spectrum of any smooth scheme of finite type over $\k$ is dualizable
  in $\SH(\k_{\rm mot})$.
  \item Assume the base field $k$ is of positive characteristic $p$. Let $\cE$ denote a motivic ring spectrum which is ${\mathbb Z}_{(\ell)}$-local as in
  Definition  ~\ref{Zl.local}, for a prime $\ell$ different from $p$. Then $\cE\wedge X_+$ is dualizable in $\SH(\k_{\rm mot}, \cE)$, for any smooth 
  scheme $\rmX$ of finite type over $\k$.
  \item Assume the base field $\k$ satisfies the finiteness conditions in ~\eqref{etale.finiteness.hyps}. Let $\ell$ be a prime different
   from $char(\k)$ and let $\cE$ denote
  a motivic ring spectrum whose homotopy groups are all $\ell$-primary torsion. Then $ \epsilon^*(\cE \wedge \rmX_+)$ is dualizable in $\SH(\k_{et}, \epsilon^*(\cE))$
  for any smooth scheme $\rmX$ of finite type over $\k$.
 \end{enumerate}
\end{theorem}
\vskip .2cm
The following is an overview of our main results on the construction of the transfer. We start with an equivariant form of the pre-transfer which is
defined using a theory of Spanier-Whitehead duality, which can be made to be compatible with a group action as discussed below. 
The next step in the construction of the transfer is to feed an equivariant form of the pre-transfer, such as in ~\eqref{G.equiv.pretransfer} 
into a Borel construction, which is discussed in section ~\ref{Borel.construct}.
Let ${\group}$ denote a linear algebraic group. We need to carry out the construction of the transfer in two distinct contexts:
(i) when the group ${\group}$ is {\it special} in Grothendieck's terminology: see \cite{Ch}. For example, ${\group}$ could be a $\GL_n$ for some $n$ or a finite product of $\GL_n$s \mbox{ and } (ii) when ${\group}$ is not
necessarily special. In the first case, every $\rmG$-torsor is locally trivial on the Zariski (and hence the Nisnevich) topology 
while in the second case $\rmG$-torsors are locally trivial only in the \'etale topology.
\vskip .2cm
In both cases, we will let ${\BG}^{\it gm,m}$ (${\EG}^{\it gm,m}$) denote the $m$-th degree approximation to the classifying 
space of the group $\group$ (its principal $\group$-bundle, \res) as in  \cite{MV} (see also \cite{Tot}).
These are, in general, quasi-projective smooth schemes over $k$. 
\vskip .2cm
More generally,  we may start with a $\rmG$-torsor $\rmE \ra \rmB$, with both $\rmE$ and $\rmB$ smooth quasi-projective schemes over $\rmS$. 
We will further assume that $\rmB$ is {\it always connected}. A basic example of such a torsor is ${\EG}^{\it gm,m} \ra {\BG}^{\it gm,m}$.
\footnote{By default, whenever a group acts on a scheme or a simplicial presheaf, we will view it as a left action.}
\vskip .1cm
{\it The Borel construction}. 
Given a simplicial presheaf $\rmX$ with an action by $\rmG$, one forms the quotient $\rmE\times _{\rmG} \rmX$
of the product $\rmE \times \rmX$ by the diagonal action: here $\rmG$ acts on the right on $\rmE$ through the involution of $\rmG$ given by $g \mapsto g^{-1}$ and
on the left on $\rmX$ in the usual manner. The construction of such a quotient is the Borel construction and
it needs to be carried out carefully so that if $\rmX$ is a smooth scheme, one obtains the correct object. This construction is discussed
in detail in section ~\ref{Borel.construct}.
\vskip .1cm
Let $\rmX$ and $\rmY$ denote two simplicial presheaves provided with $\rmG$-actions. We will consider the following three {\it basic contexts} for the transfer: \index{torsor} \index{The three basic contexts}
 \vskip .1cm
 (a) $\rmp: \rmE \ra \rmB$ is a $\rmG$-torsor for the action of a linear algebraic group $\rmG$ with both $\rmE$ and $\rmB$ smooth quasi-projective schemes over $k$, with $\rmB$ {\it connected} and 
 \[\pi_{\rmY}: \rmE \times_{\rmG}  (\rmY \times \rmX) \ra \rmE \times_{\rmG}  \rmY\]
 the induced map, where
 $\rmG$ acts diagonally on $\rmY \times \rmX$. One may observe that, on taking $\rmY= Spec \, \k$ with the trivial action of $\rmG$, the map $\pi_{\rmY}$ becomes $\pi:\rmE\times_{\rmG} \rmX \ra \rmB$ (the induced projection), which is an
 important special case.
\vskip .1cm 
(b) ${\BG}^{\it gm,m}$ will denote the $m$-th degree approximation to the geometric classifying space of the linear algebraic group $\rmG$ as in \cite{MV} (see also \cite{Tot}),
$\rmp: {\EG}^{\it gm, m} \ra {\BG}^{\it gm, m}$ is the corresponding universal $\rmG$-torsor and 
\[\pi_{\rmY}: {\EG}^{\it gm, m}\times_{\rmG} (\rmY \times \rmX) \ra {\EG}^{\it gm, m}\times_{\rmG} \rmY\]
is the induced map.
\vskip .1cm
(c) If $ \rmp_m$ ($\pi_{\rmY, m}$) denotes the map denoted $\rmp$ ($\pi_{\rmY}$) in (b), here we let $\rmp = \colimm \rmp_m$ and let 
\[\pi_{\rmY} = \colimm \pi_{\rmY,m}: {\EG}^{\it gm}\times_{\rmG} (\rmY \times \rmX) =\colimm {\EG}^{\it gm, m}\times_{\rmG}(\rmY \times \rmX) \ra \colimm {\EG}^{\it gm, m}\times_{\rmG} \rmY = {\EG}^{\it gm}\times_{\rmG} \rmY.\]
Strictly speaking, the above definitions apply only to the case where $\rmG$ is { special}. 
 When $\rmG$ is { not special}, the above objects will in fact need to be replaced by the derived push-forward of the above objects viewed as sheaves on the
big \'etale site of $k$ to the corresponding big Nisnevich site of $k$, as discussed in ~\eqref{et.case.Borel.const.1}.  However, we will denote
these new objects also by the same notation throughout, except when it is necessary to distinguish between them. Recall that, for $\rmG$ { not special}, we {\it will assume the base field is also infinite} to prevent certain
unpleasant situations. 
\vskip .2cm
Throughout the following discussion, $\cE^{\rmG}$ will denote any one of the $\rmG$-equivariant spectra considered in ~\eqref{choice.ring.spectra}, with $\cE$ denoting the 
 corresponding non-equivariant spectrum: see Definition ~\ref{equiv.vs.nonequiv.spectra}.

\vskip .2cm

Then we obtain the following key result.
\vskip .2cm
\begin{theorem} 
\label{thm.transf.const}
 Let $\rmf: \rmX \ra \rmX$ denote
a ${\group}$-equivariant map and let $\pi_{\rmY}: \rmE\times _{\rm G}(\rmY \times \rmX) \ra \rmE\times _{\rm G}\rmY$ denote any one of the maps considered in {\rm (a)} through {\rm (c)} above. Let $\rmf_{\rmY} = id_{\rmY} \times \rmf:\rmY \times \rmX \ra \rmY \times \rmX$ denote
 the induced map. 
\vskip .2cm
Then in case {\rm (a)}, we obtain a map (called {\it the transfer })
\[\tr(\rmf_{\rmY}):{\Sigma^{\infty}_{\T}}(\rmE\times _{\rm G}\rmY)_+ \ra {\Sigma^{\infty}_{\T}} (\rmE\times _G({\rmY}\times \rmX))_+\quad  (\tr(\rmf_{\rmY}): \cE \wedge (\rmE\times _{\rm G}\rmY)_+ \ra \cE \wedge (\rmE\times _G({\rmY}\times \rmX))_+ )\]
in ${\SH}(k)$ ($\SH(k, {\cE})$, \res) if ${\Sigma^{\infty}_{\T}}\rmX_+$ is dualizable in ${\SH}(k)$ 
(if $ \cE \wedge \rmX_+$ is dualizable in $\SH(k, {\cE})$, \res)
having the following properties.
\begin{enumerate}[\rm(i)]
\item If $\tr(\rmf_{\rmY})^m:{\Sigma^{\infty}_{\T}} (\EG^{\it gm, m}\times_{\rmG}\rmY) _+ \ra {\Sigma^{\infty}_{\T}} (\EG^{\it gm,m}\times_{\rmG}(\rmY \times \rmX))_+\quad  (\tr(\rmf_{\rmY})^m: \cE \wedge (\EG^{\it gm, m}\times_{\rmG}\rmY) _+ \ra \cE \wedge (\EG^{\it gm,m}\times_{\rmG}(\rmY \times \rmX))_+ )$
denotes the corresponding transfer maps in case {\rm (b)}, the maps $\{\tr(\rmf_{\rmY})^m|m\}$ are compatible as $m$ varies. The corresponding induced map $\colimm \, \tr(\rmf_{\rmY})^m$ will be denoted $\tr(\rmf_{\rmY})$.
\item
Assume the base field $k$ satisfies the finiteness conditions in ~\eqref{etale.finiteness.hyps}. Assume
$\cE$ (which belongs to $\Spt(\k_{\rm mot})$) is  $\ell$-complete, in the sense of Definition ~\ref{Zl.local}, 
for some prime $\ell \ne char(\k)$.  Let $\epsilon^*: \Spt(\k_{\rm mot}) \ra \Spt(\k_{et})$
denote the functor induced by the obvious map sites from the \'etale site of $k$ to the Nisnevich site of $k$. 
\vskip .1cm
If $ \epsilon^*(\cE \wedge \rmX_+)$ is dualizable in $\SH(\k_{et}, \epsilon^*(\cE))$,  then there exists a transfer 
$\tr(\rmf_{\rmY})$ in $\SH(\k_{et}, \epsilon^*(\cE))$ satisfying similar properties.
\end{enumerate}
\end{theorem}
\begin{remark} Further properties of the transfer are discussed in \cite[Theorem 2.3]{CJ23-T2}.
\end{remark}
\vskip .2cm
We begin section 2, with a quick review of the basic model category framework for simplicial presheaves both in the motivic and 
\'etale settings. This is followed by a brief discussion of a model structure on the category of pointed simplicial presheaves provided with 
the action of a presheaf of groups.  The next three sections discuss the categories of spectra used in the construction of the
transfer. We let $\rmS$ denote either the base scheme or a fixed simplicial presheaf. 
Section 4 then starts with the category of equivariant spectra, denoted $\Spt^{\rmG}(\rmS)$: such equivariant spectra
will be indexed by the Thom spaces of finite dimensional representations over the given base $\rmS$. $\Spt(\rmS)$ will denote the 
category of spectra that are indexed by the non-negative integers. To relate these two categories of spectra we also introduce
 intermediate categories of spectra denoted ${\widetilde \Spt}^{\rmG}(\rmS)$ and ${\widetilde \Spt}(\rmS)$. The above categories of spectra are considered in both the 
motivic and \'etale contexts. Section 5 discusses various model structures on these categories. In section 6, we relate the 
model structures on the above categories of spectra. 
\vskip .2cm
The following is proven there.
\begin{theorem} (See Proposition ~\ref{comp.1}.)
 \label{thm.comp.spectra}
 \begin{enumerate}
  \item The categories of spectra $\Spt(\rmS)$ and ${\widetilde \Spt}(\rmS)$ are related by adjoint functors 
  $i^*:{\widetilde \Spt}(\rmS) \ra \Spt(\rmS)$ and ${\mathbb P}: {\widetilde \Spt}(\rmS) \ra \Spt(\rmS)$ which define a 
  Quillen equivalence between the corresponding projective stable model structures.
  \item The categories of spectra ${\widetilde \Spt}^{\rmG}(\rmS)$ and ${\widetilde \Spt}(\rmS)$ are related by adjoint functors 
  $j^*:{\widetilde \Spt}(\rmS) \ra {\widetilde \Spt}^{\rmG}(\rmS)$ and $\tilde{\mathbb P}: {\widetilde \Spt}^{\rmG}(\rmS) \ra {\widetilde \Spt}(\rmS)$ which define a 
  Quillen equivalence between the corresponding projective stable model structures.
  \item The functors ${\mathbb P}$ and $\tilde {\mathbb P}$ are strict monoidal functors.
 \end{enumerate}

\end{theorem}
In addition, there is an obvious forgetful functor $\tilde \rmU: \Spt^{\rmG}(\rmS) \ra {\widetilde \Spt}^{\rmG}(\rmS)$.
It is shown in Proposition ~\ref{comp.2}, that if $\X$ belongs to $\Spt^{\rmG}(\rmS)$, then a functorial fibrant or cofibrant
replacement of $\tilde \rmU(\X)$ in fact belongs to $\Spt^{\rmG}(\rmS)$. This observation, then enables us to
show that one can define Spanier-whitehead duals of spectra $\X$ in $\Spt^{\rmG}(\rmS)$ so that $\tilde \rmU(\X)$ 
are dualizable as objects in ${\widetilde \Spt}^{\rmG}(\rmS)$, and that then the Spanier-Whitehead duals in fact belong to $\Spt^{\rmG}(\rmS)$. This is similar to 
the discussion in the first two paragraphs, on the 
Spanier-Whitehead dual in the topological setting of the suspension spectra of compact manifolds provided  with group-actions and plays a key role in the
construction of the transfer in section 8. Section 7 discusses Spanier-Whitehead duality in the motivic and \'etale settings and an appendix
summarizes basic results on spherical fibrations and Thom spaces in the motivic and \'etale settings.

\label{intro}
\section{\bf Basic Model category framework for simplicial presheaves}
\label{basic.framework.2}
\vskip .2cm
We will fix a {\it perfect} field $k$ as the base scheme, and then restrict to the category of smooth
schemes of finite type over $k$.  This category will be denoted ${\rm Sm}_{\k}$. 
 This category will be provided
with either the big Zariski, big Nisnevich or big \'etale topologies and the corresponding site will be denoted ${\rm Sm}_{\k, \Zar}$, ${\rm Sm}_{\k, \Nis}$ or
${\rm Sm}_{\k, \et}$. (Observe as a result, that the objects of these categories are all smooth schemes of finite type over $\k$ and hence
have $\Speck$ as the terminal object, and the coverings of a given scheme will be either Zariski, Nisnevich or \'etale coverings.) If $\k=\Cl$ is the field of complex numbers, one also considers the local homeomorphism
 topology. Here  the coverings of an object $\rmU$ are collections $\{\rmU_i \ra \rmU(\Cl)|i\}$, with each $\rmU_i \ra \rmU(\Cl)$ 
a quasi-finite map of
topological spaces which are local-homeomorphisms when $\rmU(\Cl)$ is provided with the transcendental topology.
 ${\rm Sm}_{\k,\ellh}$ will denote the corresponding big site, where the objects are again smooth schemes over $k$.
\vskip .2cm
The goal of this section is to establish a general framework for the rest of our work: though much of our work takes place in the 
motivic setting on the Nisnevich site, the \'etale and Betti realization functors make it essential that we state our results in 
this section so that they hold on any of the above sites. Results of a technical nature on the various model categories considered 
in this section will be discussed separately in later sections.
Given the above choices for the categories of schemes, the following will be the main choice for a category of simplicial presheaves on it.
\subsection{Simplicial presheaves on ${\rm Sm}_{\k}$}
\label{model.simpl.presh}
The category of all unpointed
simplicial presheaves on ${\rm Sm}_{\k}$ will be denoted $\Spc(\k)$, while the corresponding category of all pointed 
simplicial presheaves on this category will be denoted $\Spc_*(\k)$. Observe that the latter category is a symmetric monoidal
category with the usual smash product of pointed simplicial presheaves as the product: this will be denoted $\wedge$. \index{$\Spc(\k)$} \index{$\Spc_*(\k)$}
\vskip .1cm
Next one has several possible choices of {\it model structures} on the  categories of simplicial presheaves on $\Spc(\k)$ and 
$\Spc_*(\k)$. For example, one has the {\it projective} model structure, where
the fibrations and weak-equivalences are defined section-wise, with the cofibrations defined by  the lifting property. One also
has the {\it injective} model structure (which is also often called the object-wise model structure), where weak-equivalences and cofibrations are defined section-wise, with the fibrations defined
by the lifting property. One of the main advantages of the injective model structure is that every object is cofibrant and every
injective map of simplicial presheaves is a cofibration. \index{injective model structure on simplicial presheaves} \index{object-wise model structure on simplicial presheaves}
All the model structures considered above are cofibrantly generated, and in fact combinatorial model categories: see, for example, \cite[Proposition A.2.8.2]{Lur}.
These are also {\it tractable} model structures, in the sense that the sources of the generating cofibrations and trivial cofibrations are 
cofibrant. The projective model structure is also {\it cellular} (see \cite[Definition 12.1.1]{Hirsch}) and both model structures are left proper (see \cite[Definition 13.1.1]{Hirsch}.
\vskip .1cm
Next we let $\rmS$ denote a fixed simplicial presheaf in $\Spc(\k)$, which could be either the presheaf represented by an object
of the site ${\rm Sm}_{\k}$ or any simplicial presheaf in $\Spc(\k)$. Then we let $(\Spc(\k) \downarrow \rmS)$ denote the 
category of objects over $\rmS$ in $\Spc(\k)$: an object in this category is an object $\rmP \in \Spc(\k)$ together with a map
$\rmp_{\rmP}: \rmP \ra \rmS$ and where a map from  $(\rmP, \rmp_{\rmP})$ to $(\rmQ, \rmq_{\rmQ})$ is a map $\rmf: \rmP \ra \rmQ$ 
so that $\rmp_{\\rmQ} \circ \rmf = \rmp_{\rmP}$. Clearly there is a forgetful functor $\rmU: (\Spc(\k)\downarrow \rmS) \ra \Spc(\k)$.
It is shown in \cite[Theorem 1.5]{Hirsch15} that this model category is also a cofibrantly generated
model category where a map $\rmf: (\rmP, \rmp_{\rmP}) \ra (\rmQ, \rmp_{\rmQ})$ is a cofibration (fibration, weak-equivalence, generating cofibration, generating trivial cofibration) 
if and only if $\rmU(f)$ is a cofibration (fibration, weak-equivalence, generating cofibration, generating trivial cofibration, \res) in $\Spc(\k)$. 
Moreover, it is shown in \cite[Theorem 1.7]{Hirsch15} that the model category $(\Spc(\k)\downarrow \rmS)$ is cellular and 
left proper, when the given model structure on $\Spc(\k)$ is cellular and left proper.
\vskip .2cm
\subsubsection{\bf Pointed simplicial presheaves}
\label{simpl.presheaf.0}
 \index{simplicial presheaf} 
 \vskip .1cm
(i) The main choice for the category of simplicial presheaves, (which will be important in considering fiber-wise duality), will be the following. 
In general $\rmS$ will denote a simplicial presheaf on ${\rm Sm}_{\k}$ as in the last paragraph.
We will restrict to the category of simplicial presheaves that are pointed over $\rmS$, i.e., it is the category 
 consisting of pairs $(\rmP, \rmP_{\rmP})$ in $(\Spc(\k)\downarrow \rmS)$ together with a section $\rms_{\rmP}$ to $\rmp_{\rmP}$.
A map $(\rmP, \rmp_{\rmP}, \rms_{\rmP}) \ra (\rmQ, \rmp_{\rmQ}, \rms_{\rmQ})$ will denote a map $\rmf: \rmP \ra \rmQ$ so
that $\rmp_{\rmQ} \circ \rmf = \rmp_{\rmP}$ and $\rmf \circ \rms_{\rmP} = \rms_{\rmQ}$. {\it This category will henceforth be denoted $\Spc_*(\rmS)$.} 
Therefore, $\rms_{\rmP}$ sends $\rmS$ isomorphically to
a sub-object of $\rmP$, which we denote by $\rms_{\rmP}(\rmS)$. We let the forgetful functor sending a triple $(\rmP, \rmp_{\rmP}, \rms_{\rmP})$ to
$\rmP$ be denoted by $\rmU$ again.
\vskip .1cm
(An example of the case where $\rmS$ is a simplicial presheaf
appears in ~\eqref{et.case.Borel.const.1}. (See also ~\eqref{compat.sections.V}.) In fact that is the reason for working in this generality.) 
\vskip .1cm
It is shown in \cite[Lemma 2.1.21]{Hov99} that one may define the structure of a cofibrantly generated model category 
on $\Spc_*(\rmS)$ by defining a map $\rmf: (\rmP, \rmp_{\rmP}, \rms_{\rmP}) \ra (\rmQ, \rmp_{\rmQ}, \rms_{\rmQ})$ to be a cofibration (fibration,
 weak-equivalence) if $\rmU(f)$ is a cofibration (fibration, weak-equivalence, \res) in $\Spc(\k)$. Moreover the generating cofibrations
 (generating trivial cofibrations) 
 for this model structure is given by 
 \be \begin{align}
  \label{IJ+}
\rmI_{\rmS} &=\{i\sqcup \rmS : \rmA \sqcup \rmS \ra \rmB \sqcup \rmS| i:\rmA \ra \rmB  \in \rmI, \rmA, \rmB, i \in (\Spc(\k)\downarrow \rmS)\}, \\
 (\rmJ_{\rmS} &= \{j\sqcup \rmS: \rmC\sqcup \rmS \ra \rmD \sqcup \rmS| j: \rmC \ra \rmD \in \rmJ, \rmC, \rmD, j \in (\Spc(\k)\downarrow \rmS)\}, \res) \notag
 \end{align} \ee
 if $\rmI$ ($\rmJ$) denote the set of generating cofibrations (generating trivial cofibrations) for
  the model structure on $\Spc(\k)$. It follows from \cite[Theorems 2.7 and 2.8]{Hirsch15} that the resulting model structure 
  on $\Spc_*(\rmS)$ is cellular and left proper when the model structure on $\Spc(\k)$ that one starts with is. For an object $\rmA \in
 (\Spc(\k)\downarrow \rmS)$, we will henceforth refer to $\rmA \sqcup S$ as {\it the object $\rmA$ pointed by $\rmS$} and {\it denote it by} $\rmA_+$ for convenience.  \index{$\Spc_*(\rmS)$}
\vskip .2cm
We next define a monoidal structure on
$\Spc_*(\rmS)$ as follows. Let $\rmP, \rmQ \in \Spc_*(\rmS)$. Then we let:
\be \begin{equation}
     \label{wedgeS}
\rmP \wedge^{\rmS} \rmQ = (\rmP\times_{\rmS} \rmQ)/(\rms_{\rmP}(\rmS) \times_{\rmS} \rmQ \cup \rmP \times_{\rmS} \rms_{\rmQ}(\rmS)).
    \end{equation} \ee
\vskip .2cm \noindent \index{$\wedge^{\rmS}$}
It may be important to point out that the term on the right is the quotient over $\rmS$, i.e. {\it the pushout} of:
$\rmS \leftarrow \rms_{\rmP}(\rmS) \times_{\rmS} \rmQ \cup \rmP \times_{\rmS} \rms_Q(S) \ra P \times _{\rmS}\rmQ$.
We skip the verification that $\Spc_*(\rmS)$ with above smash product $\wedge ^{\rmS}$ is a closed symmetric monoidal 
category. 
\vskip .1cm
If the base object $\rmS$ represents a {\it point} in the site, for example, is the spectrum of a field
for the Zariski and Nisnevich sites and is the spectrum of a separably closed field for the \'etale site, then
every simplicial presheaf has an obvious map to $\rmS$, so that the above monoidal structure  reduces to the familiar one.  The main difference
between the two cases is therefore, when $\rmS$ is a general scheme or a chosen simplicial presheaf. In this case, the smash product $\wedge^{\rmS}$ defines
what corresponds to {\it a fiber-wise} smash product over $\rmS$. \index{fiber-wise smash product}  The discussion of the transfer map in  section 8 (see 
~\eqref{Ch4.Thom.sp.1} through  ~\eqref{tr(f)G.3}) and Appendix A, Lemma ~\ref{AppA.fiberwise.join.2}
show that indeed the fiber-wise smash product is important for us.
\begin{terminology}
 \label{termin.0}
 It is convenient for us to work with a general simplicial presheaf $\rmS$ as the base for a considerable part of our discussion, in this introductory section. As a result we will let $\rmS$ denote
 such a general simplicial presheaf for the most part in this section, except in those special cases where we need to require this to be the base field $\k$.
\end{terminology}

\vskip .2cm
{\bf Further refinements of the above model structures.}
We need to modify these model structures, so that
 the resulting model structures satisfy { the following basic requirements}:
 \vskip .1cm
\label{model.struct.req.0}
\begin{enumerate}[\rm(i)]
 \item the pushout-product axiom and the monoidal axiom with respect to
the above monoidal structures hold.
\item Since one of the main focus is on motivic applications, we will always invert all maps of the form 
\[\{{\rm {pr}}:\rmX \times {\mathbb A}^1 \ra \rmX|\rmX\},\]
where $\rmX$ varies over all the objects in the given site.  We will perform this localization even when considering the \'etale sites,
since ${\mathbb A}^1$ is acyclic in the \'etale topology only with respect to constant sheaves like $\Zl^{\nu}$,
where $\ell $ is different from the residue characteristics.
\end{enumerate}
\subsubsection{\bf The motivic model structure on Nisnevich presheaves}
\label{Nis.presh}
We will accomplish this on the Nisnevich site as follows. One defines a presheaf $\rmP \in \Spc_*(\rmS)$
to be {\it motivically fibrant} if (i) $\rmP \in \Spc_*(\rmS)$ is fibrant, (ii) $\rmP$ sends an {\it elementary 
distinguished square} as in \cite[p. 96, Definition 1.3]{MV} whose component schemes when pointed by $\rmS$ belong to $\Spc_*(\rmS)$ to a homotopy cartesian square and (iii) the obvious pull-back 
$\Gamma (\rmU, \rmP) \ra \Gamma (\rmU \times {\mathbb A}^1, \rmP)$ is a weak-equivalence, for all $\rmU$ in the site ${\rm Sm}_{\k}$ when pointed by $\rmS$  belong to 
$\Spc_*(\rmS)$. Then a map $\rmf: \rmP \ra \rmQ$ 
 in  $\Spc_*(\rmS)$ is a motivic weak-equivalence if the induced map $Map(\rmf, \rmP)$ is
a weak-equivalence for every motivically fibrant object $\rmP$, with $Map$ denoting the simplicial
mapping space. One then localizes such weak-equivalences.  The resulting model structure will be denoted 
 $\Spc_*(\rmS_{\rm mot})$. \index{$\Spc_*(\rmS_{\rm mot})$}
\vskip .2cm
One may specify the generating trivial cofibrations for the localized model category considered above as in \cite[section 2]{DRO2}.

\vskip .3cm
{\it An alternate approach} that applies in general to any of the sites we consider is to localize by inverting hypercovers as in \cite{DHI}.
For the convenience of the reader, we will discuss a little bit of the background here. First one needs to fix 
a Grothendieck topology on ${\rm Sm}_{\k}$, which we will assume is one of the topologies considered before, namely,
the Zariski, the Nisnevich or the \'etale topologies. We will denote this site by ${\rm Sm}_{\k?}$. Next one defines the notion of
local fibrations for simplicial presheaves in $\Spc_*(\rmS)$, which in particular, implies that the induced map on the stalks are all 
fibrations: see \cite[section 3]{DHI}. A map of simplicial presheaves in $\Spc_*(\rmS)$ is a {\it local weak-equivalence} if it induces
a weak-equivalence at all the stalks. A map of simplicial presheaves in $\Spc_*(\rmS)$ is a {\it local acyclic fibration} if 
it is a local fibration and a local weak-equivalence. Let $\rmU \eps {\rm Sm}_{\k}$ so that $\rmU \sqcup \rmS$ belongs to $\Spc_*(\rmS)$. 
A {\it hypercover} $\rmU_{\bullet}\sqcup \rmS \ra \rmU \sqcup \rmS$ is a simplicial presheaf in $\Spc_*(\rmS)$ so that (i) each $\rmU_n$ is a co-product
of representable objects from the site ${\rm Sm}_{k?}$ with each $\rmU_n \sqcup \rmS$ belonging to $\Spc_*(\rmS)$, (ii) comes equipped with a map $\rmU_{\bullet}\sqcup \rmS \ra \rmU \sqcup \rmS$
in $\Spc_*(\rmS)$ that is a local acyclic fibration.
\vskip .2cm
Next it is shown in \cite[Proposition 6.4]{DHI} that the class of all hypercovers contains {\it a dense set} of hypercovers
so that any hypercover may be refined by one belonging to the above set. 
Following \cite{DHI}, a simplicial presheaf $\rmP$ has the {\it descent property}
for all hypercovers if for all $\rmU$ in ${\rm {Sm}}_{\k?}$, and all hypercovers $\rmU_{\bullet} \ra \rmU$,
the induced map $\rmP(\rmU \sqcup \rmS) \ra \holimD \{\Gamma (\rmU_n \sqcup \rmS, \rmP)|n\}$ is a weak-equivalence.
\vskip .1cm
By localizing with respect to
maps of the form $\rmU_{\bullet}\sqcup \rmS \ra \rmU \sqcup \rmS$ where $\rmU_{\bullet}$ belongs to a dense set of hypercovers of 
 of $\rmU$ and also maps of the form $\rmU \times {\mathbb A}^1 \sqcup \rmS \ra \rmU \sqcup \rmS$, it is proven in
 \cite[Theorem 8.1]{Dug} and \cite[Example A. 10]{DHI} that we obtain
a model category which is Quillen equivalent to the model category  $\Spc_*(\rmS_{\rm mot})$ considered above.
In this case one defines a generating set of trivial cofibrations as follows, where $\rmh_X$ denotes the simplicial presheaf associated to 
an object $\rmX$ in the site. Let 
\vskip .1cm
\be \begin{multline} 
     \label{tilde.Jet}
      \begin{split}
 {\oJ}' = \{ u: \rmh_{\rmU \times {\mathbb A}^1}\sqcup \rmS \ra  \rmC_u \sqcup \rmS \mid \rmU \eps {\rm {Sm}}_{\k} \} \cup \{\sqcup_{\alpha}h_{\rmU_{\alpha}}\sqcup \rmS \ra \rmh_{\rmU} \sqcup \rmS \in \Sm_{\k}| \rmU \mbox{ is the disjoint union of } \rmU_{\alpha}\}\\
\cup \{ \rmh_{\rmU_{\bullet}} \sqcup \rmS \ra Cyl(h_{\rmU_{\bullet}} \ra \rmh_{\rmU}) \sqcup \rmS \mid \rmU \eps {\rm {Sm}}_{\k} \mbox{ and }\rmU_{\bullet} \ra \rmU  \mbox{ is a given hypercover } \}
  \end{split} 
\end{multline} \ee
\noindent 
where  we factor
the obvious map  $\rmh_{\rmU \times {\mathbb A}^1} \ra \rmh_{\rmU}$ into a cofibration $u:\rmh_{\rmU \times {\mathbb A}^1} \ra 
\rmC_u$ followed by a simplicial homotopy equivalence $\rmC_u \ra \rmh_{\rmU}$. (See \cite[section 2]{Isak}.)

\vskip .2cm
 Though the resulting localized category
is cellular and left-proper (see \cite[Chapters 12 and 13]{Hirsch}) it is unlikely to be weakly finitely generated:
the main issue  is that the hypercoverings, being simplicial objects, need not be small. 
\vskip .2cm
\subsubsection{The injective model structures}
\label{inj.model}
In case one starts with the {\it injective model structure} on $\Spc_*(\rmS)$, one needs to modify the above
set-up as follows. First the generating cofibrations for the injective model structure on $\Spc(\k)$ will be some set of injective maps $\{\rmA_{\alpha} \ra \rmB_{\alpha}|\alpha\}$
and not the set $\{\delta \Delta[n]\times \rmh_{\rmU} \ra \Delta[n] \times \rmh_{\rmU} \in \Spc(\k) |\rm n \ge  {\rm 0}\}$. The resulting model structure will be a combinatorial model structure. Next in order 
to consider the left Bousfield localization for the motivic model structure, one needs to modify the set $\oJ$ of the generating trivial cofibrations as follows:
it will be the pushout product of maps of the form $\rmf \square \rmg$, where $\rmf$ denotes a map in the set ${\oJ}'$, and $\rmg: A_{\alpha} \ra B_{\alpha}$ considered above.
\vskip .1cm
Then the monoidal axiom and the pushout-product axiom may be verified readily as every object in this model structure is cofibrant. As a result it also provides
the structure of a monoidal model structure.
\vskip .2cm
\subsubsection{\bf The motivic model structure on \'etale simplicial presheaves: $\Spc_*(\rmS_{et})$}
\label{et.presh}
Localizing with respect to
maps of the form $\rmU_{\bullet}\sqcup \rmS \ra \rmU \sqcup \rmS$ where $\rmU_{\bullet} \sqcup$ belongs to a dense set of hypercovers 
 of $\rmU \sqcup \rmS$ and also maps of the form $\rmU \times {\mathbb A}^1 \sqcup \rmS\ra \rmU \sqcup \rmS$, as $\rmU$ varies among the objects of the site 
 is the only possibility in this case.
 We discuss the model structure on $\Spc(\k_{et})$ which may be then modified as in ~\eqref{IJ+} to define
 a corresponding model structure on  $\Spc_*(\rmS_{et})$. 
In this case one defines a generating set of trivial cofibrations ${\oJ}'$ just as in ~\eqref{tilde.Jet} by  considering \'etale hypercovers in the 
place of Nisnevich hypercovers.
\vskip .1cm \noindent
Let $\oJ$ be the pushout product of maps of the form $\rmf \square \rmg$, where $\rmf$ denotes a map in the set ${\oJ}'$, and $\rmg: \delta \Delta[n] \ra \Delta[n]$
denotes the obvious cofibration of simplicial sets. The set $\oJ$ will then denote 
a {\it generating set} for the {\it trivial cofibrations} in the above motivic model structure. 

\vskip .2cm
Then we obtain the following result:
\begin{theorem} \index{$\Spc_*(\rmS_{et})$}
 \label{et.model.struct}
 On starting with the projective model structure on $\Spc_*(\rmS)$,  the resulting model structure on $\Spc_*(\rmS_{et})$ is cofibrantly generated, cellular and left proper. 
 On starting with the injective model structure on $\Spc_*(\rmS)$, the resulting model structure on $\Spc_*(\rmS_{et})$ is combinatorial and left proper. Both  also
 satisfy the pushout-product axiom and the monoidal axiom so that they are monoidal model categories with respect to the smash product in ~\eqref{wedgeS}.
\end{theorem}
\begin{proof} The proof follows along the lines of the proof of the corresponding result for the Nisnevich simplicial presheaves
 as discussed in \cite[section 2]{DRO2}. The main difference is in the fact that we will be  considering the set 
 $\{ \rmh_{\rmU_{\bullet}} \sqcup \rmS \ra Cyl(\rmh_{\rmU_{\bullet}} \ra \rmh_{\rmU})\sqcup \rmS \mid \rmU \eps 
{\rm {Sm}}_{\k} \mbox{ that belongs to } \Spc_*(\rmS) \}$ in the place of the set $\{\rmq: {\rm sq} \ra  {\rm tq} \mid q \mbox { is an elementary distinguished square in } \Spc_*(\rmS) \}$.
However, one may observe that if $\phi:\rmU_{\bullet} \ra \rmU$ is a hypercover and $\rmV$ is any object in the site, then 
$\phi\times \rmV: \rmU_{\bullet} \times \rmV \ra \rmU \times \rmV$ is also a hypercover and that moreover, the mapping cylinder
${\rm Cyl}(\rmh_{\rmU_{\bullet} \times \rmV} \ra \rmh_{\rmU \times \rmV}) = Cyl(\rmh_{\rmU_{\bullet}} \ra \rmh_{\rmU}) \times \rmV$. With this modification,
the same arguments as in \cite[section 2]{DRO2} apply to complete the proof.
\end{proof}
\vskip .2cm
\begin{terminology}
 \label{generic.notation} In order to carry out our discussions in as much generality as possible, we will adopt the following convention.
 Throughout the rest of the volume, we will let $\Spc_*(\rmS)$ denote one of the following, unless further clarified: (i) $\Spc_*(\rmS_{\rm mot})$,
 (ii) $\Spc_*(\rmS_{et})$ or (iii) the category of pointed simplicial presheaves on $\Sm_{\k}$ pointed over a fixed simplicial presheaf $\rmS$ as in 
 ~\ref{simpl.presheaf.0}, i.e., without further Bousfield localizations. Moreover, when $\rmS = Spec \, \k$, $\Spc_*(\rmS_{\rm mot})$ ($\Spc_*(\rmS_{et})$)
 will be denoted $\Spc_*(\k_{\rm mot})$ or $\Spc_*(\k)$ ($\Spc_*(\k_{et})$, \res). \index{$\Spc_*(\k_{\rm mot})$} \index{$\Spc_*(\k_{et})$}
\end{terminology}

\subsubsection{\bf Pointed equivariant simplicial presheaves}
We start with a linear algebraic group $\group$ defined over $\k$ (or equivalently a smooth affine group scheme of finite type over $\k$). We will next do a base-extension to $\rmS$,
i.e., replace $\group$ by $\group_{\rmS} = \group \times_{\Speck} \rmS$. But we will continue to denote $\group_{\rmS}$ by $\group$. This way, we may assume,
without loss of generality that $\group \in \Spc_*(\rmS)$. {\it In the following discussion, we will view $\group$ as the corresponding presheaf of
groups on the given site}. 
\begin{definition}
\label{equiv.prshvs}
\begin{enumerate}[\rm(i)]
\item Then 
\be \begin{equation} \index{$\Spc_*^{\rmG}(\rmS)$}
     \label{equiv.framework.0}
\Spc_*^{\rmG}(\rmS) 
    \end{equation} \ee
\vskip .1cm \noindent
will denote the category of 
those presheaves $\rmP \in \Spc_*(\rmS)$ 
provided with an action by the presheaf represented by $\rmG$, with the morphisms being $\rmG$-equivariant maps.  (In particular, this means 
the group action preserves the base point of any pointed simplicial presheaf $\rmP \in \Spc_*^{\rmG}(\rmS)$.)
\item $\Spc_*^{\rmG}(\rmS_{\rm mot})$ ($\Spc_*^{\rmG}(\rmS_{et})$) will denote the corresponding
category of $\rmG$-equivariant presheaves associated to presheaves in $\Spc_*(\rmS_{\rm mot})$ ($\Spc_*(\rmS_{et})$, \res).
\end{enumerate}
\end{definition}
 Here it is important that the base scheme $\rmS$ has trivial action by the 
group $\group$ so that the maps $\rms:S\ra \rmP$ and $\rmp:\rmP \ra \rmS$ are $\group$-equivariant. 
Then maps between such $\group$-equivariant simplicial presheaves will be ${\group}$-equivariant
maps of simplicial presheaves, compatible with the structure maps $\rms$ and $\rmp$. \index{equivariant simplicial presheaf}
\vskip .2cm 
Let $\rmU: \Spc_*^{\rmG}(\rmS) \ra \Spc_*(\rmS)$ 
denote the forgetful functor
forgetting the group action. 
Observe that if $\rmP, \rmQ \in \Spc_*^{\rmG}(\rmS) $, then $\rmP \wedge ^{\rmS}\rmQ$ defined above (i.e., with $\rmP$ and $\rmQ$ viewed as objects in $\Spc_*(\rmS)$) has a natural induced
$\group$-action and therefore, defines an object in $\Spc_*^{\rmG}(\rmS)$. Therefore, we let the monoidal structure on
$\Spc_*^{\rmG}(\rmS)$ be defined by $\wedge ^{\rmS}$ as in ~\eqref{wedgeS}, \res).
Similarly, if $\rmP, \rmQ \in  \Spc_*^{\rmG}(\rmS)$, then the internal $\Hom(\rmP, \rmQ)$ in $\Spc_*(\rmS)$ belongs to
$\Spc_*^{\rmG}(\rmS)$. These basically prove:
\be \begin{equation}
\label{U.1}
\rmU(\rmP \wedge \rmQ) = \rmU(P) \wedge \rmU(Q) \mbox { and } \rmU(\Hom_{\rmG}(\rmP, \rmQ) = \Hom(\rmU(P), \rmU(Q)), \rmP, \rmQ \in  \Spc_*^{\rmG}(\rmS),
\end{equation} \ee
\vskip .1cm \noindent
where $\Hom_G$ denotes the internal hom in $\Spc_*^{\rmG}(\rmS)$.
\vskip .2cm
\begin{proposition}
	\label{functorial.rep.inhert.G.act}
Let $\Spc_*(\rmS)$ be provided with one of the model structures defined above. Let $\rmP \in \Spc_*^{\rmG}(\rmS)$.
\begin{enumerate}[\rm(i)]
 \item 
If $\rmP' \ra \rmU(P)$ is a functorial cofibrant replacement in $\Spc_*(\rmS)$, then $\rmP' \in \Spc_*^{\group}(\rmS)$.
\item
If $ \rmU(P) \ra \rmP''$ is a functorial fibrant replacement in $\Spc_*(\rmS)$, then $\rmP''\in \Spc_*^{\group}(\rmS)$. 
\end{enumerate}
\end{proposition}
\begin{proof} As the proof of (ii) is entirely similar to the proof of (i), we will explicitly consider only the proof of (i).
	Recall ${\group}$ acts on $\rmP$ as a presheaf, i.e., for each scheme $\rmX$ in the given site, ${\group}(\rmX)$ is given
	an action on $\rmP(\rmX)$, compatible with restrictions for maps $\rmU \ra \rmX$ in the site. The functoriality in the choice of the cofibrant replacement $\rmP'$ shows that
	each $g_s \in \rmG(\rmX)$ then has an induced action on $\rmP'(\rmX)$, that the square
	\[ \xymatrix{{\rmP'(\rmX)}\ar@<1ex>[r]^{g_s} \ar@<1ex>[d] & {\rmP'(\rmX)} \ar@<1ex>[d]\\
			      {\rmP(\rmX)} \ar@<1ex>[r]^{g_s} & {\rmP(\rmX)}} \]
commutes, and that the corresponding squares for $\rmU$ and $\rmX$, for a map $\rmU \ra \rmX$ in the given site are compatible. (See \cite[Definition 1.1.1]{Hov99} for details on functorial fibrant and cofibrant replacements.) \end{proof} 

\section{Model structures on equivariant simplicial presheaves}
\index{unstable model structures: equivariant presheaves}
\label{model.eq.psh}
Here we will be making strong use of the model structures on simplicial presheaves in the non-equivariant setting, discussed already in the last section.
One may recall from the discussion in the last section that the following are some of {\it the main choices for the category of simplicial presheaves} we consider. 
 We will let $\rmS$ denote either a scheme in ${\rm Sm}_k$ or a fixed simplicial presheaf and then restrict to the 
 category of simplicial presheaves on ${\rm Sm}_k$ that are pointed over $\rmS$, i.e., those 
 simplicial presheaves $\rmP$ on ${\rm Sm}_{\k}$ that come
 equipped with maps $\rmp: \rmP \ra \rmS$ and $\rms: \rmS \ra P$ so that $\rmp \circ \rms = id_{\rmS}$. 
 \vskip .2cm
 We will follow the {\it generic notation} adopted
 in  Terminology ~\ref{generic.notation} so that $\Spc_{*}(\rmS)$ will denote one of the three categories defined there, i.e.  
 (i) $\Spc_*(\rmS_{\rm mot})$, (ii) $\Spc_*(\rmS_{et})$  or (iii) the category of pointed simplicial presheaves on $\Sm_{\k}$ pointed over the 
 fixed simplicial presheaf $\rmS$ as in 
 ~\ref{simpl.presheaf.0}, i.e., without any further Bousfield localizations.
 \vskip .1cm
Moreover, if $\rmG$ denotes a linear algebraic group of finite type over $\k$,
we will assume that $\rmG$ acts trivially on $\rmS$ and $\Spc_{*}^{\rmG}(\rmS)$ will denote the corresponding category of $\rmG$-equivariant 
simplicial presheaves pointed over $\rmS$. Observe that $\rmG$ itself identifies with the presheaf of groups represented by $\rmG$,
and therefore an action by $\rmG$ on a simplicial presheaf has the obvious meaning. We let $\rmG_{\rmS} = \rmG\times_{\Speck}\rmS$. 
\vskip .2cm
We let $\W$ denote a family of subgroup-schemes of $\group$ so that it has the following properties 
\be  \begin{enumerate}[ \rm (i)]
        \item it is an inverse system ordered by inclusion,
        \item if $\rmH \eps \W$, $\rmH_{\group}$ =the core of $\rmH$, i.e., the largest subgroup of $\rmH$ that is normal in $\group$, belongs to $\W$, and
         \item if $\rmH \eps \W$ and $\rmH' \supseteq \rmH$ is a closed smooth subgroup-scheme of $\group$, then $\rmH' \eps \W$. 
\end{enumerate} \ee
In the case $\group$ is a finite
group, $\W$ will denote all subgroups of $\group$. When $\group$ is a smooth group-scheme, we will leave $\W$ unspecified, for now. Clearly the family of all closed smooth subgroup-schemes
of a given smooth group-scheme satisfies all of the above properties, so that we will use this as a default choice of $\W$ when nothing 
else is specified.
\vskip .3cm
For each  subgroup-scheme $\rmH \eps\W$, let $\rmP^{\rmH}$ denote the sub-presheaf
of $\rmP$ of sections fixed by $\rmH$, i.e., $\Gamma (\rmU, \rmP^{\rmH}) = \Gamma (\rmU, \rmP)^{\rmH}$. If $\rmH$ is a normal subgroup-scheme of $\group$ and 
$\bar \rmH= \group /\rmH$, then
\[\Gamma (\rmU, \rmP)^{\rmH} = \{s \eps \Gamma (\rmU, \rmP)| \mbox { the action of $\group$ on $s$ factors through } \bar \rmH\}.\]
\vskip .3cm
\subsubsection{The $\group$-equivariant sheaves $\group/H_+$, $\rmH \eps \W$}
\label{GmodH}
We let $\group/H \otimes \rmS = \sqcup_{\group/H}S$, i.e., 
\[\Gamma(\rmU, \rmG/\rmH \otimes \rmS) = \sqcup _{\Gamma(\rmU, \rmG/\rmH)} \Gamma(U, \rmS).\]
Then we let $\group/H_+ = ((\group/H) \otimes S) \sqcup S$.
This will be viewed as an object of $\Spc_*^{\rmG}(\rmS)$ where the structure map $\rmp:\group/H_+ \ra S$ sends all the summands $\rmS$ appearing in
$((\group/H) \otimes S) \sqcup S$ to $\rmS$ by the identity map of $\rmS$. The section to $\rmp$ is the map $\rms$ that sends $\rmS$ by the identity to
the outer summand $\rmS$ in $((\group/H) \otimes S) \sqcup \rmS$. We observe that we obtain the adjunction:
\[\Spc_*^{\rmG}(\rmS) \ra \Spc_*(\rmS), Q \mapsto Q^{\rmH} \mbox{ has as left adjoint the functor } \rmP \mapsto (\group/H)_+ \wedge^{\rmS} \rmP.\]

\begin{proposition} 
\label{key.props}
\begin{enumerate}[\rm(i)]
\item Let $\phi: \rmP' \ra \rmP$ denote a map of simplicial presheaves in $\Spc_*^{\rmG}(\rmS)$. Then $\phi$ induces a
 map $\phi^{\rmH} : {\rmP'}^{\rmH} \ra \rmP^{\rmH}$ for each subgroup $\rmH \eps \W$. The association 
$\phi \mapsto \phi^{\rmH}$ is functorial in $\phi$ in the sense that if $\psi: P'' \ra P'$ is another map,
then the composition $(\phi\circ \psi)^{\rmH} = \phi^{\rmH} \circ \psi^{\rmH}$.
\item Let $\{\rmQ_{\alpha}|\alpha\}$ denote a direct system  of simplicial sub-presheaves of a simplicial
presheaf $\rmQ \eps \Spc_{*}^{ \group}(\rmS)$ indexed by a small filtered category. If $\rmK$ is any subgroup of $\group$, then
$({\underset {\alpha} \colim} \, \rmQ_{\alpha})^{\rmK }= {\underset {\alpha} \colim} \, \rmQ_{\alpha}^{\rmK}$.
\end{enumerate}
\end{proposition}
\begin{proof} (i) is clear. 
(ii) Observe  that for a simplicial sub-presheaf $\rmQ'$ of $\rmQ$,
 with 
$\rmQ' \eps \Spc_{*}^{ \group}(\rmS)$, $({\rmQ'})^{\rmK} = {\rmQ'} \cap \rmQ^{\rmK}$. Now each $\rmQ_{\alpha}$ maps injectively into $\rmQ$ and the structure maps
of the direct system $\{\rmQ_{\alpha}|\alpha\}$ are all injective maps. Therefore (ii) follows readily. 
\end{proof}
\vskip .3cm 
 \subsubsection{\it Finitely presented objects} \index{finitely presented}
Recall an object $C$ in a category $\C$ is {\it finitely presented} or {\it compact} if
$Hom_{\C}(C, \quad)$ commutes with all small filtered colimits in the second
argument. Here $Hom_{\C}$ denotes the eternal hom in the category $\C$.
\vskip .2cm
Next we define the following structure of a cofibrantly generated simplicial model
category on $\Spc_*^{\group}(\rmS)$ 
starting with the projective
model structure or the injective model structure on $\Spc_*(\rmS)$: see  ~\ref{model.simpl.presh}. Let $\oI$ ($\oJ$) denote the generating cofibrations (generating trivial cofibrations) in
$\Spc_*(\rmS)$ for the corresponding model structure. Let $\wedge ^{\rmS}$ denote the smash product defined  in section 1, ~\eqref{wedgeS}.
\vskip .3cm 
\begin{definition} (The model structure)
\label{equiv.model.struct}
(i) The {\it generating cofibrations} are of the form 
\[\oI_{\group}=\{(\group/H )_+ \wedge^{\rmS} i \mid   i \eps I, \quad H \eps \W \}, \]
\vskip .3cm 
(ii) the {\it generating trivial cofibrations} are of the form 
\[\oJ_{\group} = \{(\group/H ) _+ \wedge^{\rmS}  j \mid  j \eps J, 
H \subseteq \group, \quad H \eps \W \} \, \mbox{ and }\]
\vskip .3cm 
(iii) and the  {\it weak-equivalences} ({\it fibrations}) are maps  $\rmf:\rmP' \ra \rmP$ in
$\Spc_*^{\group}(\rmS)$ so that $\rmf^{\rmH}:{\rmP'}^{\rmH} \ra \rmP^{\rmH}$ is a weak-equivalence
(fibration, \res) in $\Spc_{*}(\rmS)$ for all $\rmH \eps \W$.
\end{definition}
\begin{theorem} 
\label{model.structs.equiv.presh}
\begin{enumerate}[\rm(i)]
\item The above structure defines a cofibrantly generated
simplicial model structure on
\newline \noindent
$\Spc_*^{\group}(\rmS)$ that is proper. 
\item The above model category is combinatorial and
tractable (in the sense that it is locally presentable, and has sets of generating cofibrations and trivial cofibrations whose sources are also cofibrant.)
\item In addition, the 
smash product of pointed simplicial presheaves (defined as in ~\eqref{wedgeS}) 
makes the above category symmetric monoidal and it satisfies the
pushout-product axiom in both the injective and projective model structures. The unit for the smash product
in  $\Spc_*^{\group}(\rmS)$ is cofibrant in both the injective and projective model structures.
\end{enumerate}
\end{theorem}
\begin{proof} We first consider (i). A key observation is the following. Let $\rmH \subseteq \group$ denote a
subgroup belonging to $\W$. Then  the functor $\rmP \mapsto \rmP^{\rmH},  \, \Spc_*^{\group}(\rmS) \ra \Spc_{*}(\rmS) $ has a left-adjoint, namely, the functor
\be \begin{equation}
     \label{basic.adj}
\rmQ \mapsto (\group/\rmH)_+ \wedge^{\rmS}  \rmQ.
\end{equation} \ee
We now proceed to verify that the hypotheses of 
\cite[Theorem 2.1.19]{Hov99} are satisfied. It is obvious that the subcategory of
weak-equivalences is closed under composition and retracts and has the two-out-of-three
property.  Next we proceed to verify that the domains of $\oI_{\group}$ are small relative to
$\oI_{\group}$-cell. (Given a set $\oI$ of maps in a category containing all small colimits, a relative
$\oI$-cell complex is a transfinite composition of pushouts of elements of $\oI$. We denote this by $\oI$-cell.) Recall first that if 
$\{ \rmP _{\alpha}| \alpha \}$ denotes a small filtered direct system of objects in $\Spc_*^{\group}(\rmS)$
that are sub-objects of  a given $\rmP \eps \Spc_*^{\group}(\rmS)$, then by Proposition ~\ref{key.props}(ii), one obtains the 
identification 
\[ (\colimalpha \rmP_{\alpha})^{\rmH} \cong \colimalpha (\rmP_{\alpha})^{\rmH}. \]
\vskip .3cm
We consider this first in the projective model structure. \index{projective model structure}
One may first recall that the projective model structure on $\Spc_*(\rmS)$ has as generating cofibrations: $\{ (\delta \Delta [n]_+ \wedge \rmh_{\rmX+}) \ra   ( \Delta [n]_+ \wedge \rmh_{\rmX+})|n\}$.
\vskip 1cm
Let $ (\group/\rmH )_+ \wedge  (\delta \Delta [n]_+ \wedge^{\rmS} \rmh_{\rmX+}) \ra (\group/\rmH )_+ \wedge^{\rmS}  ( \Delta [n]_+ \wedge^{\rmS} \rmh_{\rmX+})$
denote a generating cofibration in $\oI_G$. Suppose one is given a map $\rmf:  (\group/\rmH )_+ \wedge^{\rmS} (\delta \Delta [n]_+ \wedge^{\rmS} \rmh_{\rmX+}) \ra 
\colimalpha \rmP_{\alpha}$.
By the adjunction in ~\eqref{basic.adj}, this map corresponds to a map $\rmf^{\rmH}:(\delta \Delta [n]_+ \wedge^{\rmS} \rmh_{\rmX+}) \ra (\colimalpha \rmP_{\alpha})^{\rmH}
= \colimalpha (\rmP_{\alpha})^{\rmH}$.
Since $(\delta \Delta [n]_+ \wedge^{\rmS} \rmh_{\rmX+})$ is small in $\Spc_*(\rmS)$, the map
$(\delta \Delta [n]_+ \wedge^{\rmS} \rmh_{\rmX+}) \ra \colimalpha (\rmP_{\alpha}^{\rmH})$ factors through some $\rmP_{\alpha_0}^{\rmH}$. Therefore, its adjoint
$\rmf:(\group/\rmH )_+ \wedge^{\rmS} (\delta \Delta [n]_+ \wedge^{\rmS} \rmh_{\rmX+}) \ra \colimalpha \rmP_{\alpha}$ factors through $\rmP_{\alpha_0}$.
This proves the domains
of $\oI_{\group}$ are small relative to $\oI_{\group}$-cell. 

An entirely similar argument proves that the domains of $\oJ_{\group}$ are small 
relative to $\oJ_{\group}$-cell.
Observe that these two steps make use of the fact that the model structure on $\Spc_{*}(\rmS)$ is indeed the projective
model structure.
\vskip .3cm
Next we consider the injective model structure. In fact the following arguments work in both model structures. Let 
$id \wedge i: (\group/\rmK)_+ \wedge^{\rmS}  A \ra (\group/\rmK)_+ \wedge B$ denote a generating cofibration in $\oI_{\group}$, that is,
the map $i: \rmA \ra \rmB$ is in $\oI$. Here we make use of the observation that the fixed point functor $\rmQ \ra \rmQ^{\rmH}$ is {\it cellular}, that is,
it has the following two properties for simplicial presheaves (see \cite[Proposition 3.10]{Guill}): \index{injective model structure}
\be \begin{multline}
\begin{split}
     \label{pushouts.and.colimits.fixed.points}
\mbox{ (i) The fixed point functor }  \rmQ \ra \rmQ^{\rmH} \mbox{ preserves pushouts along any map of the form }\\
id \wedge^{\rmS} i: (\group/K)_+ \wedge^{\rmS} \rmA \ra (\group/K)_+ \wedge^{\rmS}  \rmB, \mbox{ with } i \eps \oI \mbox{ and }
\end{split}
\end{multline} \ee
\be \begin{multline}
     \begin{split}
\mbox{ (ii) if } \{ \rmP _{\alpha}| \alpha \} \mbox{ denotes a small filtered direct system of objects in } \Spc_*^{\group}(\rmS) 
\mbox{ that are sub-objects of  a  } \notag \\  
\rmP \eps \Spc_*^{\group}(\rmS), \mbox{ one obtains the identification }
(\colimalpha \rmP_{\alpha})^{\rmH} \cong \colimalpha (\rmP_{\alpha})^{\rmH}. \notag
\end{split} \end{multline} \ee
\vskip .3cm \noindent
 (The first property may be verified readily for the action of any group on
presheaves of pointed sets. The last property has already been observed above in general.)
Observe that $(id \wedge^{\rmS} i)^{\rmH} = \vee _{\group/H \ra \group/K} i$ which is clearly a cofibration. The combined effect of the above two properties is that
the fixed point functor $\rmQ \ra \rmQ^{\rmH}$ sends any $\oI_G-cell$ ($\oJ_G-cell$) to an $\oI-cell$ ($\oJ-cell$, \res). Therefore since the domains 
of maps in $\oI$
 ($\oJ$) are small relative to $\oI$-cell ($\oJ$-cell, \res), it follows that the domains of the maps in $\oI_{\group}$ ($\oJ_{\group}$) are
also small relative $\oI_{\group}-cell$ ($\oJ_{\group}-cell$, \res.)
\vskip .3cm
Recall that given a collection of maps $\oJ$ in a category, $\oJ-inj$ denotes those maps that have the right lifting property 
 with respect to every map in $\oJ$. 
One may now observe using the adjunction  that the fibrations defined above identify with $\oJ_{\group}-inj$  and that the trivial 
fibrations (that is, 
the fibrations that are also weak-equivalences) identify with $\oI_{\group}-inj$. Recall from \cite[2.1.2]{Hov99} that $\oI_{\group}-cof$ (that is, the $\oI_{\group}$-cofibrations)
are the maps $(\oI_{\group}-inj)-proj$, that is,  those maps that have the left lifting property with respect to every trivial fibration. We now observe that
every map in $\oJ_{\group}-cell$ is clearly a weak-equivalence. Therefore, suppose we are given a commutative square in $\Spc_*^{\group}(\rmS)$:
\[\xymatrix{{(\group /\rmH)_+ \wedge^{\rmS} \rmA} \ar@<1ex>[r] \ar@<1ex>[d] ^{id \wedge^{\rmS} j} & {\rmX} \ar@<1ex>[d]^{\rmp}\\
            {(\group/\rmH)_+ \wedge^{\rmS} \rmB} \ar@<1ex>[r] & \rmY}
\]
with $j \eps \oJ$ and $\rmp \eps \oI_{\group}-inj$. Then, by adjunction this corresponds to the commutative square:
\[\xymatrix{{ \rmA} \ar@<1ex>[r] \ar@<1ex>[d] ^{ j} & {\rmX^{\rmH}} \ar@<1ex>[d]^{\rmp^{\rmH}}\\
            { \rmB} \ar@<1ex>[r] & {\rmY^{\rmH}}}
\]
in $\Spc_{*}(\rmS)$. Now $\rmp^{\rmH}$ is a trivial fibration and $j$ is a generating trivial cofibration, so that one obtains a 
lifting: $\rmB \ra X^{\rmH}$ making the two triangles commute. By adjunction this lift corresponds to a lift $(\group/\rmH )_+ \wedge \rmB \ra \rmX$ in the
first diagram. This proves any map in $\oJ_{\group}-cell $ is in $\oI_{\group}-cof$. 
\vskip .3cm
Since $\oI_{\group}-inj$ corresponds to trivial fibrations, every map in $\oI_{\group}-inj$ is a weak-equivalence and  it is in 
 $\oJ_{\group}-inj$ (which  denote the  fibrations). Since $\oJ_{\group}-inj$ denotes fibrations, it is clear that any map that is in
$\oJ_{\group}-inj$ and is also a weak-equivalence is also in $\oI_{\group}-inj$ (which denotes trivial fibrations.)
Therefore, we have verified all the hypotheses in \cite[Theorem 2.1.19]{Hov99} and therefore the {\it first} statement that the structures
in Definition ~\ref{equiv.model.struct} define a cofibrantly generated model category in the theorem is proved.
\vskip .3cm
The left-properness may be established using the cellularity of the fixed point functors considered above. The right properness is clear since
the fixed point functor preserves pull-backs. 
\vskip .3cm
Next observe that for any $\group$, the objects of the form $(\group/\rmH)_+ \wedge^{\rmS} (\Delta [n]_+ \wedge^{\rmS} {\rm h_{U+}})$ as $\rmU \eps \Sm_S$, $\rmH \eps \W$ and $n \ge 0$
 vary form a set of generators for $\Spc_*^{\rmG}(\rmS)$. 
Therefore,  every object in the above categories is a filtered colimits of objects $\{G_{\alpha}|\alpha\}$ 
 obtained as finite colimits of the above generators.
 In the projective model structure, it is clear from the choices of the sets $\oI_{\group}$ and $\oJ_{\group}$ that
every object $(\group/\rmH)_+ \wedge^{\rmS}  {\rm h_{X+}}$, $\rmX \eps \Sm/S$ is cofibrant. (These follow from the fact that each object ${\rm h_{X+}}$ is cofibrant in
the projective model structure on $\Spc_*(\rmS)$.) In the injective model structure, all monomorphisms are cofibrations, 
so that the
domains of the sets $\oI_{\group}$ and $\oJ_{\group}$ are cofibrant. Therefore, it follows that these model structures are also tractable. These prove (ii).
\vskip .3cm
Finally we prove that the model structures in Definition ~\ref{equiv.model.struct} define a symmetric monoidal model category structure on $\Spc_*^{\group}(\rmS)$
 with respect to the monoidal structure given by $\wedge ^{\rmS}$. Observe from 
\cite[Corollary 4.2.5]{Hov03} that in order to prove the pushout-product axiom holds in general, it suffices to prove that the pushout product of two generating cofibrations is a cofibration and
that this pushout-product is also a weak-equivalence when one of the arguments is a generating trivial cofibration.
Therefore, let $(\group/H)_+ \wedge^{\rmS} i: (\group/H)_+ \wedge ^{\rmS} \rmA \ra (\group/H)_+ \wedge ^{\rmS} \rmB$ and let 
$(\group/K)_+ \wedge ^{\rmS} j: (\group/K)_+ \wedge ^{\rmS} \rmX \ra
(\group/K)_+ \wedge ^{\rmS} \rmY$ denote two generating cofibrations in $\Spc_*(\rmS)^{\group}$. Then
a key observation is that $(\group/H)_+ \wedge ^{\rmS} (\group/K)_+ \cong \vee (\group/(H \cap K))_+$ where the $\vee$ is over the 
orbits of $\group$ for the diagonal action of $\group$ on $\group/H \times \group/K$. Therefore, it suffices to prove that for ${\rm E}$
a fibrant object in $\Spc_*^{\group}(\rmS)$, the induced map
\be \begin{multline}
     \begin{split}
  \Hom((\group/(H\cap K))_+ \wedge ^{\rmS} B \wedge ^{\rmS} Y, E) \ra\\
    \Hom((\group/(H \cap K))_+ \wedge ^{\rmS} A \wedge ^{\rmS} Y, E) {\underset {\Hom((\group/(H\cap K))_+ \wedge ^{\rmS} A \wedge ^{\rmS} X, E)} \times} \Hom((\group/(H\cap K))_+ \wedge ^{\rmS} B \wedge ^{\rmS} X, E)
     \end{split}
\end{multline} \ee
is a fibration in $\Spc_*^{\group}(\rmS)$, which is a weak-equivalence if $i$ or $j$ is also weak-equivalence and where $\Hom$
denotes the appropriate internal hom.
The above map now identifies with
\[ \Hom( {\rm B \wedge ^{\rmS} Y}, \rmE^{\rm H \cap K}) \ra \Hom( {\rm A \wedge ^{\rmS} Y}, \rmE^{\rm H\cap K}) {\underset {\Hom( {\rm A \wedge ^{\rmS} X}, \rmE^{\rm H \cap K})} \times} \Hom({\rm B \wedge ^{\rmS} X}, {\rm E}^{\rm H \cap K}) \]
where $\Hom$ now denotes the internal hom in $\Spc_*(\rmS)$. Therefore, the fact that the above map is a fibration 
and that it is a trivial fibration if $i$ or $j$ is also a weak-equivalence follows from the fact that the pushout-product axiom
holds in $\Spc_*(\rmS)$.
\vskip .3cm
 The unit for the smash-product $\wedge ^{\rm S}$ on $\Spc_*^{\group}(\rmS)$ defined in ~\eqref{wedgeS} is ${\rm S}_+$ and this is
cofibrant in both the injective and projective model structures on $\Spc_*^{\group}(\rmS)$. 
\end{proof}

\vskip .3cm

\begin{remark}
With the above model structure on $\Spc_*^{\rmG}(\rmS)$, one may readily see that the forgetful functor $\rmU: \Spc_*^{\rmG}(\rmS) \ra \Spc_*(\rmS)$
is a left Quillen functor, but an object $\rmP \in \Spc_*^{\rmG}(\rmS)$ so that $\rmU(\rmP)$ is cofibrant in $\Spc_*(\rmS)$ need not be
cofibrant in $\Spc_*^{\rmG}(\rmS)$. 
An alternate way to put a model structure on the category
$\Spc_*^{\rmG}(\rmS)$ is to transfer the model structure on $\Spc_*(\rmS)$ by means of the underlying functor $\rmU$ and a left adjoint to it. This adjoint is given by the functor sending a simplicial presheaf $\rmP$ to $\group \otimes \rmP$ (which is
defined by $(\group \otimes P)(\rmX) = {\underset {G(\rmX)} \vee } P(\rmX)$. Again an object $\rmP \in \Spc_*^{\rmG}(\rmS)$ so that
$\rmU(\rmP)$ is cofibrant as an object in $\Spc_*(\rmS)$ need not be cofibrant in $\Spc_*^{\rmG}(\rmS)$.
\end{remark}
\subsubsection{\bf A Key observation}
\label{key.observ.1}
As a result the composite functor $\RHom( \quad, \quad) \circ \rmU$ will be in general distinct from $\rmU \circ\RHom^{\rmG} (\quad, \quad)$, where
$\RHom (\quad, \quad) $( $\RHom^{\rmG}(\quad, \quad)$) denotes the internal derived Hom in $\Spc_*(\rmS)$ ($\Spc_*^{\rmG}(\rmS)$, \res).
 Recall that the notion of Spanier-Whitehead duality we will need to use involves  stable versions of the corresponding functors
in the non-equivariant framework: hence the above approach is not helpful for us. (See the introductory paragraph to section 8 for more on this.)
\vskip .1cm
Therefore, we need to obtain an analogue of Proposition ~\ref{functorial.rep.inhert.G.act}  
for spectra: one of the  goals of the discussion in the next section, is to accomplish this while at the same time 
setting up a category of spectra with group actions to be used throughout the rest of the paper.
\vskip .2cm
\section{\bf Categories of spectra} Spectra play {\it two distinct roles} in our context:
\label{spectra.types}
	\begin{enumerate} [\rm(i) ]
	\item {One may observe that the definition of the transfer is as a stable map of certain spectra, and its applications are to splitting  maps of generalized cohomology theories defined with respect to spectra. Here spectra mean either motivic or \'etale spectra which { are not necessarily equivariant}. { Moreover, the notion of Spanier-Whitehead dual that is needed for the transfer is essentially in the non-equivariant setting.}}	
    \item {In contrast, the construction of the transfer as a stable map starts with a { pre-transfer,
    	which will have to be an equivariant map of equivariant spectra, which is then fed into the Borel-construction to obtain the transfer for generalized (Borel-style) equivariant cohomology theories}. (Equivariant spectra are defined below.)}
    \item{{ Thus, the spectra that enter into the construction of a pre-transfer (which have to be equivariant)
    	  are all equivariant spectra, though the  transfer is applied to generalized cohomology theories 
    	  that are defined with respect to spectra that need not be equivariant.} This dual role of spectra, makes it necessary for us to proceed carefully and explaining how the two roles are related.
    	  }
    \item {When the group ${\group}$ is a finite group, the regular representation of ${\group}$ will contain all
		the irreducible representations (at least in characteristic $0$), so that one may define a suspension functor
		by taking the smash product with the Thom-space of the regular representation. As a result one can then
		define symmetric ${\group}$-equivariant spectra readily as one does in the non-equivariant case. Since our interest
		is mainly when the group ${\group}$ is a linear algebraic group of positive dimension, one cannot adopt this framework of symmetric spectra, 
		which is why we have defined the category $\Spt^{\rmG}(\rmS)$ as in the following discussion.}
    \item {As pointed out earlier, we consider actions by {\it all linear algebraic groups} on schemes both in the \'etale and motivic frameworks. Prior work
in this area has been restricted to actions by special classes of algebraic groups, such as those that are {\it special} in the sense of \cite{Ch} (as in \cite{Lev18})
and {\it linearly reductive groups} such as in \cite{Ho}, which in positive characteristics, are just products of tori and finite abelian groups with torsion
 prime to the characteristic. The need to consider actions by all linear algebraic groups is essential to obtain the full range of applications of the transfer,
 and this makes it necessary for us to consider equivariant unstable and stable homotopy theory in the \'etale and motivic framework.  We do this as
 concisely and briefly as possible.}
       \end{enumerate}
\subsubsection{\bf Equivariant spectra} 
\label{equiv.sph.sp.0} \index{Equivariant spectra} \index{Enriched symmetric monoidal category}
Throughout the following discussion, we will adopt the following { terminology}: ${\group}$ will denote a fixed linear algebraic group
defined over the base scheme (which we assume again is a perfect field) and $\C$ will denote the category $\Spc_*(\rmS)$, while $\C^{\group}$ will denote the 
category $\Spc_*^{\rmG}(\rmS)$. Here $\rmS$ could be either the base scheme or a fixed
simplicial presheaf, so that all the simplicial presheaves we consider will be pointed over $\rmS$. 
\vskip .2cm	 
	 The ${\group}$-spectra will be indexed not by the non-negative integers, but by the Thom-spaces of finite dimensional representations 
	 of the group-scheme ${\group}$ (i.e., affine spaces over $\k$ provided with a linear action by $\rmG$). 
	 We will fix a set of finite dimensional representations of $\rmG$, which contains all irreducible representations, and is 
	 closed under finite direct sums, just as is done in the topological framework in \cite[1.2]{LMS}. Henceforth, we will only consider representations that belong to this set. 
	 For each finite dimensional representation $\rmV$ of $\rmG$, we let $\rmT_{\rmV}^{\rmS} = \rmT_{\rmV} \times_{\Speck} \rmS$.
	 Then we let
	$\Sph^{\group}_{\rmS}$ denote the {\it subcategory} of $\C^{\group}$ whose {\it objects} are $\{\rmT_{\rmV}^{\rmS} |\rmV\}$, and
	where $\rmV$ varies over all finite dimensional representations of the group ${\group}$ and $\rmT_{\rmV}$ denotes its Thom-space. 
	We let the {\it morphisms} in this category be given by the maps $\rmT_{\rmV}^{\rmS} \ra \rmT_{\rmV \oplus \rmW}^{\rmS}$
	induced by homothety classes of $k$-linear injective and $\rmG$-equivariant maps $\rmV \ra \rmV \oplus \rmW$. One may observe that
	$\rmT_{\rmV}$ identifies with the quotient sheaf ${\rm Proj}(\rmV \oplus 1) /{\rm Proj}(\rmV)$, so that there is an
	injection  $\rmV \ra \rmT_{\rmV}$ for every $\rmG$-representation $\rmV$. 
	\vskip .1cm
	Let $\T={\mathbb P}^1$ pointed by $\infty$. \index{$\T$} We also let $\Sph_{\rmS}$ denote the category whose objects are
	 $\{\T^{\wedge n}_{\rmS} | n \ge 0 \}$, but given the structure of $\C$-enriched category as follows.
	 First, the morphisms in this category are given by the maps $\T^{\wedge n}_{\rmS} \ra \T^{\wedge n+m}_{\rmS}$
	induced by homothety classes of $k$-linear injective  maps ${\mathbb A}^n \ra {\mathbb A}^{n+m}$.
	\vskip .1cm
	We will make $\Sph^{\group}_{\rmS}$ ($\Sph_{\rmS}$) an {\it enriched monoidal category, enriched over the category $\C^{\group}$ ($\C$, \res)} as follows.
	 First let $\rmS^0 = \rmS_+ = \rmS \sqcup \rmS$. \index{$\Sph^{\group}_{\rmS}$} \index{$\Sph_{\rmS}$} Then for
	 $\rmV, \rmW$ that are ${\group}$-representations, we let the $\C^{\rmG}$-enriched internal hom in $\Sph^{\rmG}_{\rmS}$ be defined by: 
	 \be \begin{align}
	 \label{G.enrich.hom}
	 \Hom_{\C^{\group}}(\rmT_{\rmV}^{\rmS}, \rmT_{\rmV \oplus \rmW}^{\rmS}) &= (\underset {\alpha: \rmV \ra \rmV \oplus \rmW} \sqcup \rmT_{\rmW}^{\rmS}) \sqcup \rmS, \rmW \neq \{0\}\\
	 								  &= (\underset {\alpha: \rmV \ra \rmV } \bigvee \rmS^0) \sqcup \rmS, \rmW=\{0\}. \notag
	 \end{align} \ee
	\vskip .1cm \noindent
	Here  the sum varies over all {\it homothety classes of ${\group}$-equivariant and $k$-linear injective maps} $\alpha:\rmV \ra \rmV \oplus \rmW$ and the summand $\rmS$ denotes
	a {\it base point} added so that the above enriched $\Hom$s are pointed simplicial presheaves over $\rmS$.
	 The base points in
	each of the summands $\rmT_{\rmW}^{\rmS}$ correspond bijectively with the corresponding $\alpha$; similarly the 
	unique $0$-simplex other than the base point in each of the summands $\rmS^0$ corresponds bijectively with the
	corresponding $\alpha$. As a result, the $0$-simplices in  $\Hom_{\C^{\group}}(\rmT_{\rmV}^{\rmS}, \rmT_{\rmV \oplus \rmW}^{\rmS})$
	correspond bijectively with the morphisms $\rmT_{\rmV} \ra \rmT_{\rmV \oplus \rmW}$ in the { category} underlying the
	enriched category $\Sph^{\rmG}_{\rmS}$.
	 One defines
	the $\C$-enriched internal hom in $\Sph_{\rmS}$ by a similar formula as in ~\eqref{G.enrich.hom}:
	\be \begin{align}
	\label{enrich.hom}
	\Hom_{\C}(\T^{\wedge n}_{\rmS}, \T^{\wedge n+m}_{\rmS}) &= (\underset {\alpha: {\mathbb A}^n \ra {\mathbb A}^{n+m}} \sqcup \T^{\wedge m}_{\rmS}) \sqcup \rmS, {\it m} > 0\\
	&= (\underset {\alpha: {\mathbb A}^n \ra {\mathbb A}^n}  \bigvee \rmS^0 )\sqcup \rmS, {\it m}=0, \notag
	\end{align} \ee
	\vskip .1cm \noindent
	where now $\alpha$ varies over  homothety classes of  $k$-linear injective maps ${\mathbb A}^n \ra {\mathbb A}^{n+m}$. 
	In particular, when $m=0$, the general linear group ${\rm GL}_n$ acts on $\T^{\wedge n}$.
	\begin{proposition}
	   \label{SphG.symm.mon}
	   With the above definitions, the category $\Sph^{\group}_{\rmS}$ is a symmetric monoidal $\C^{\group}$-enriched category, where the
	   monoidal structure is given by $\rmT_{\rmV}^{\rmS} \wedge \rmT_{\rmW}^{\rmS} = \rmT_{\rmV \oplus \rmW}^{\rmS}$.  $\Sph$ is a symmetric monoidal $\C$-enriched category, where the
	   monoidal structure is given by $\rmT_{\rmS}^{n} \wedge \rmT_{\rmS}^m = \rmT_{\rmS}^{n+m}$. The forgetful functor
$j:\Sph^{\rmG}_{\rmS} \ra \Sph_{\rmS}$ is an enriched functor of $\C$-enriched categories.	
	\end{proposition}
	\begin{proof} We first verify that $\Sph^{\group}_{\rmS}$ is a $\C^{\group}$-enriched category. To see this, observe that if
	$\rmf:\rmU \ra \rmU\oplus \rmV$ is a ${\group}$-equivariant injective linear map and ${\rm g}: \rmV \ra \rmV\oplus \rmW$ is a ${\group}$-equivariant injective
	 linear map, the composition $(id \oplus {\rm g})\circ \rmf: \rmU \ra \rmU \oplus \rmV \oplus \rmW$ is an injective linear map that is
	 also ${\group}$-equivariant. The composition $\Hom_{\C^{\group}}(\rmT_{\rmU}^{\rmS}, \rmT_{\rmU\oplus \rmV}^{\rmS}) \times \Hom_{\C^{\group}}(\rmT_{\rmV}^{\rmS}, T_{\rmV \oplus \rmW}^{\rmS})  \ra
	 \Hom_{\C^{\group}}(\rmT_{\rmU}^{\rmS}, \rmT_{\rmU \oplus \rmV \oplus \rmW}^{\rmS})$ sends the summand $\rmT_{\rmV}^{\rmS} $ indexed by $\rmf$ and the summand $\rmT_{\rmW}^{\rmS}$ indexed by	 ${\rm g}$ to the summand $\rmT_{\rmV \oplus \rmW}^{\rmS}$ indexed by $(id \oplus {\rm g}) \circ \rmf: \rmU \ra \rmU \oplus \rmV \oplus \rmW$. 
	 One may now see readily that this pairing is associative and unital, so that $\Sph^{\group}_{\rmS}$ is a $\C^{\group}$-enriched category: see \cite[6.2]{Bor}.
	  \vskip .1cm
	  The monoidal structure sends $(\rmT_{\rmU}^{\rmS}, \rmT_{\rmV}^{\rmS}) \mapsto \rmT_{\rmU}^{\rmS} \wedge \rmT_{\rmV}^{\rmS} = \rmT_{\rmU \oplus \rmV}^{\rmS}$. One may now observe that the
	  associativity isomorphisms $(\rmU \oplus \rmV) \oplus \rmW \cong \rmU \oplus (\rmV \oplus \rmW)$ and the commutativity
	  isomorphism $\rmU \oplus \rmV \cong \rmV \oplus \rmU$ are both ${\group}$-equivariant maps. Therefore, one observes that the
	  monoidal structure defined by the smash product $(\rmT_{\rmU}^{\rmS}, \rmT_{\rmV}^{\rmS}) \mapsto \rmT_{\rmU}^{\rmS} \wedge \rmT_{\rmV}^{\rmS} = \rmT_{\rmU \oplus \rmV}^{\rmS}$ makes the
	  category $\Sph^{\group}_{\rmS}$ a  symmetric monoidal category. One may then readily verify that the same pairing is
	  a functor of $\C^{\group}$-enriched categories, which will prove $\Sph^{\group}_{\rmS}$ is a symmetric monoidal $\C^{\group}$-enriched category.
	  \vskip .1cm
	  The statements regarding the $\C$-enriched category $\Sph_{\rmS}$ may be proven
	  similarly. We skip the proof that $j$ is a simplicially enriched functor.
	\end{proof}
\vskip .2cm
Next we proceed to  define various categories of spectra as enriched functors. Given a symmetric monoidal category ${\mathbf {\nu}}$, and two 
${\mathbf {\nu}}$-categories ${\mathcal A}$ and  ${\mathcal B}$, $[{\mathcal A}, {\mathcal B}]$
will denote the ${\mathbf {\nu}}$-enriched category of ${\mathbf {\nu}}$-enriched functors ${\mathcal A} \ra {\mathcal B}$. (See \cite[6.2, 6.3]{Bor}.)
	\begin{definition} (The category $\Spt^{\rmG}(\rmS)$, Smash products and internal Hom in $\Spt^{\rmG}(\rmS)$). 
		\label{smash.prdcts}
		\begin{enumerate}[\rm(i)]
	\item	We define $\Spt^{\rmG}(\rmS)$ to denote the $\C^{\rmG}$-enriched category of $\C^{\rmG}$-enriched functors
		$\Sph^{\rmG}_{\rmS} \ra \C^{\rmG}$, i.e., $\Spt^{\rmG}(\rmS) =[\Sph^{\rmG}_{\rmS}, \Spc_*^{\rmG}(\rmS)]$. \index{$\Spt^{\rmG}(\rmS)$}
	\item   In particular, $\Spt^{\rmG}(\rmS_{\rm mot}) =[\Sph^{\rmG}_{\rmS}, \Spc_*^{\rmG}(\rmS_{\rm mot})]$ and 
		$\Spt^{\rmG}(\rmS_{et}) =[\Sph^{\rmG}_{\rmS}, \Spc_*^{\rmG}(\rmS_{et})]$.
		
	\item	Observe that the $\C^{\group}$-enriched category $\Sph^{\group}$ is {\it symmetric} monoidal with respect to
		the smash product of Thom-spaces. As a result (see \cite{Day}), if  $\X$, $\Y$  are two ${\group}$-spectra, viewed
		as enriched functors $\Sph^{\group}_{\rmS} \ra \C^{\group}$, their smash product $\X \wedge \Y$ defined as the left-Kan extension
		with respect to the monoidal product $\wedge: \Sph^{\group}_{\rmS} \times \Sph^{\group}_{\rmS} \ra \Sph^{\group}_{\rmS}$, will {\it  define}
		a smash product that is symmetric monoidal on $\Spt^{\rmG}(\rmS)$. The smash product identifies with the following co-end taking values 
		in the symmetric monoidal category $\Spc_*^{\rmG}(\rmS)$:
		\be \begin{equation}
			\X \wedge \Y= \int^{Ob(\Sph^{\group}_{\rmS} \otimes \Sph^{\group}_{\rmS})} \Hom_{\Sph^{\group}_{\rmS}}(\rmT_{\rmV} \wedge \rmT_{\rmW}, \quad) \wedge \X(\rmT_{\rmV}) \wedge \Y(\rmT_{\rmW}).
		\end{equation} \ee
	\item	The internal $\Hom(\X, \Y)$ is defined by the corresponding end:
		\be \begin{equation}
			\Hom(\X, \Y)(\rmT_{\rmV}) = \int_{\rmT_{\rmW} \in Ob(\Sph^{\group}_{\rmS})} \Hom_{\C^{\rmG}}(\X(\rmT_{\rmW}), \Y(T_{V \oplus W})).
		\end{equation} \ee
                \end{enumerate}
	\end{definition} \index{smash product of spectra}
\begin{remark}
It may be important to point out that taking $\rmG$ to be trivial does {\it not} define the familiar category $\Spt(\rmS)$ 
indexed by $\{\T^{\wedge n}|n \ge 0\}$, but a category of spectra which we show (see:  section ~\ref{comparison.cat.Spt}) is Quillen equivalent to the latter category. In fact taking $\rmG$
to be trivial will produce the category of spectra denoted ${\widetilde {\Spt}}(\rmS)$ defined below in 
Definition ~\ref{tildeSpt.1}.
\end{remark}
	
\vskip .1cm
\begin{definition} (The equivariant sphere spectrum and suspension spectra) \index{The equivariant sphere spectrum $\mbS^{\rmG}_{\rmS}$} \index{$\mbS^{\rmG}_{\rmS}$}
 \label{equiv.sph.sp}
 \begin{enumerate}[\rm(i)]
\item The equivariant sphere spectrum $\mbS^{\rmG}_{\rmS}$ will be defined to be the object in $\Spt^{\rmG}(\rmS)$ given by the functor $\Sph^{\rmG}_{\rmS} \ra \C^{\rmG}$, that is, 
 $\mbS^{\rmG}(\rmT_{\rmV}^{\rmS}) = \rmT_{\rmV}^{\rmS}$, $\rmT_{\rmV}^{\rmS} \in \Sph^{\rmG}$.  
 When $\rmG$ is trivial, this defines {\it a sphere spectrum}, which will be denoted 
 $\tilde \mbS_{\rmS}$. (Observe that this defines an object in the category ${\widetilde {\Spt}}(\rmS)$ defined in Definition ~\ref{tildeSpt.1}.)
 \item
 On the Nisnevich site (\'etale site) of $\Speck$, $\mbS^{\rmG}_{\rmS}$ will define the motivic sphere spectrum (the \'etale sphere spectrum), which will be denoted 
 $\mbS^{\rmG}_{\rmS, mot}$ or often simply $\mbS^{\rmG}_{\rmS}$ ($\mbS^{\rmG}_{\rmS, et}$, \res). 
 \item
 When $\rmS= \Speck$, we will denote $\mbS^{\rmG}_{\rmS, mot}$ ($\mbS^{\rmG}_{\rmS, et}$)
 by $\mbS^{\rmG}_{\k, mot}$ or simply $\mbS^{\rmG}_{\k}$ ($\mbS^{\rmG}_{\k, et}$, \res).
 \end{enumerate}
 \end{definition}

\begin{definition} (The equivariant motivic Eilenberg-Maclane spectrum) \index{The equivariant motivic Eilenberg-Maclane spectrum}
 \label{equiv.EM.sp}
 \begin{enumerate}[\rm(i)]
\item Let $\Spc_*^{tr}(\rmS)$ denote the category of all simplicial abelian presheaves with transfers on the Nisnevich site of $\Speck$ and 
pointed over $\rmS$. Let $\U: \Spc_*^{tr}(\rmS) \ra \Spc_*(\rmS)$ denote
 the forgetful functor sending a simplicial abelian presheaf with transfers to the underlying pointed simplicial presheaf. For each representation $\rmV$
 of $\rmG$, and a commutative Noetherian ring $\rmR$ with $1$, we let ${\rmR}^{tr}(\rmT_{\rmV}) = cokernel (({\mathbb Z}^{tr}({\rm Proj}(\rmV)) \otimes \rmR) \ra {\mathbb Z}^{tr}({\rm Proj}(\rmV \oplus 1)) \otimes \rmR)$.
 Now we let ${\mathbb H}({\rmR})^{\rmG}_{\rmS}= \{\U({\rmR}^{tr}(\rmT_{\rmV}))|\rmV\}$. The structure maps are given by:
 \be \begin{equation}
 \label{equiv.EM.sp.1}
 \rmT_{\rmW} \wedge \U({\rmR}^{tr}(\rmT_{\rmV})) \ra \U({\rmR}^{tr}(\rmT_{\rmW})) \wedge \U({\rmR}^{tr}(\rmT_{\rmV})) \ra \U({\rmR}^{tr}(\rmT_{\rmW}) \otimes^{tr} {\rmR}^{tr}(\rmT_{\rmV})) \cong \U({\rmR}^{tr}(\rmT_{\rmW \oplus \rmV})),
 \end{equation} \ee
 where $\otimes^{\rm tr}$ denotes the monoidal structure on the category of simplicial abelian presheaves with transfers.
 (We skip the verification that this defines a ring spectrum
 in $\Spt^{\rmG}(\k_{\rm mot})$.) 
 \item
 Taking $\rmG$ to be trivial defines the {\it motivic Eilenberg-Maclane spectrum}, which will be denoted $\tilde {\mathbb H}({\rmR})_{\rmS}$ (and when
 $\rmS = \Speck$ by $\tilde {\mathbb H}({\rmR})_{\k}$).
 \end{enumerate}
 \index{${\mathbb H}({\rmR})^{\rmG}_{\rmS}$}
\end{definition}

	\vskip .2cm
	{\it In case $\cE^{\rmG}$ is a commutative ring spectrum} in $\Spt^{\rmG}(\rmS)$, we will let $\Spt^{\rmG}(\rmS, {\cE^{\rmG}})$ denote the category 
	consisting of module spectra over $\cE^{\rmG}$ and their
	maps. In this case, the smash product $\wedge$ will be replaced by $\wedge _{\cE^{\rmG}}$ which is defined as
	\be \begin{equation}
		\label{wedgeE}
		\rmM \wedge _{\cE^{\rmG}} \rmN = Coeq( \rmM \wedge \cE^{\rmG} \wedge  \rmN \stackrel{\rightarrow}{\rightarrow} \rmM \wedge \rmN)
	\end{equation} \ee
	\vskip .2cm \noindent
	where the two maps above make use of the module structures on $\rmM$ and $\rmN$, \res. The corresponding internal $Hom$ will be denoted $\Hom _{\cE^{\rmG}}$.
	\index{$\wedge_{\cE^{\rmG}}$} \index{$\Hom _{\cE^{\rmG}}$}
	\vskip .2cm
	The main ${\group}$-equivariant ring spectra of interest to us,
	including the sphere spectrum $\mbS^{\group}$, will be the following: 
	\be \begin{multline}
	    \label{choice.ring.spectra}
		\begin{split}
			(i)\quad \mbS^{\rmG}, \quad (ii)\quad \mbS^{\group}_{\rmS}[{\it p}^{-1}] \mbox{ if the base scheme } \rmS \mbox{ is a field of characteristic } p, \\
			{\it (iii)} \quad \mbS^{\group}_{{\rmS},(\ell)}, \mbox{ which denotes the localization of } \mbS^{\group}_{{\rmS}} \mbox{ at the prime ideal } (\ell), \ell \mbox{ is a prime} \ne char(\k),\\
			(iv) \quad {\widehat {\mbS}}^{\group}_{{\rmS},\ell}, \mbox{ where } \ell \mbox{ is a prime } \ne char(\k) \mbox{ and } \\
			{\widehat {\mbS}}^{\group}_{{\rmS},\ell} \mbox{ denotes the }\ell-\mbox{completed }\rmG-\mbox{equivariant sphere spectrum} \mbox{ as well as } \\
			(v) \quad {\mathbb H}({\rmR})^{\rmG}_{\rmS} \mbox{ with } \rmR= {\mathbb Z}/\ell^n, \mbox{ with } \ell \mbox{ a prime }  \ne char(\k).
		\end{split}
	\end{multline} \ee
	Here the completion
	 at the prime $\ell$ is the Bousfield-Kan completion discussed in \cite[Appendix]{CJ23-T2}.
	\index{$\mbS^{\group}[{\it p}^{-1}]$} \index{$ \mbS^{\group}_{(\ell)}$} \index{${\widehat {\mbS}}^{\group}_{\ell}$}
	\vskip .2cm
 \vskip .2cm
 \begin{definition}
 \label{usual.spectra}
 \begin{enumerate}[\rm(i)]
  \item 
  Let $\Spt(\rmS_{\rm mot})$ denote the (usual) category of motivic spectra defined as follows.  Its objects are 
 $\X=\{\rmX_n \in \Spc_*(\rmS_{\rm mot}), \mbox{ along with structure maps } \T^{\wedge m} \wedge \rmX_n \ra \rmX_{n+m} |n, m \in {\mathbb N}\}$.  Morphisms between two such objects  $\X$ and $\Y$ are defined as compatible collection of maps
  $\X_n \ra \Y_n$, $n \in {\mathbb N}$ compatible with suspensions by $\T^{\wedge m}$, $m \in {\mathbb N}$. 
  When $\rmS = \Speck$, this category will be denoted $\Spt(\k_{\rm mot})$ of often simply $\Spt(\k)$.
  \item
  The unit of this category
  is the motivic sphere spectrum denoted $\mbS_{\k}$. For a simplicial presheaf $\rmP \in \Spc_*(\k_{\rm mot})$, the suspension spectrum $\mbS_{\k} \wedge \rmP$ will be denoted $\Sigma_{\T}^{\infty}\rmP$.
  \item
  $\Spt(\rmS_{et})$ will denote the corresponding category of $\T$-spectra defined by starting with the category $\Spc_*(\rmS_{et})$
  of pointed simplicial presheaves on the big \'etale site of $\Speck$ and pointed over $\rmS$. 
  \item
  The unit of this category
  is the  sphere spectrum denoted $\mbS_{\k_{et}}$.
  \end{enumerate}
\end{definition}

 \begin{remark}
 \label{tricky.aspect}
 We begin with the following remarks to motivate the constructions below.
 Here is {\it a particularly tricky aspect} of the construction of the pre-transfer.
The Spanier-Whitehead duality one needs to invoke in the construction of the pre-transfer is in the setting of non-equivariant spectra
and {\it not} in a corresponding  category of equivariant spectra, such as the ones discussed above. There are several
reasons for this choice, some of which are: 
\begin{enumerate}[\rm(i)]
\item Currently one does {not} have Spanier-Whitehead duality for algebraic varieties in the equivariant framework, since one
does not yet have equivariant versions of Gabber's refined alterations.
\item 
For the construction of the transfer in the context of Borel-style generalized equivariant cohomology theories this is all
that is needed: see, for example, \cite{BG75}. In more detail: all one needs in this context is Spanier-Whitehead duality in a {\it non-equivariant setting, but applied to spectra with group actions.} 
\item On the other hand, we still need
the Spanier-Whitehead dual of an object with a ${\group}$-action to inherit a nice ${\group}$-action
and we need to use sphere-spectra which also have non-trivial ${\group}$-actions. 
{ In fact, it is crucial that the source of the co-evaluation maps will have to be ${\group}$-equivariant (sphere) spectra: otherwise the spectra showing up as the target of the co-evaluation maps will have no ${\group}$-action}: see Definition ~\ref{equiv.sph.sp} and
~\ref{pretransfer}. In more detail:  though we only need a non-equivariant form of Spanier-Whitehead duality, one needs
to make all the constructions sufficiently equivariant so as to be able to feed them into the Borel construction.
\item
In \cite{BG75}, the way these issues are resolved is by making sure the Thom-Pontrjagin collapse map (which plays the role of the co-evaluation map) can be made equivariant. In our framework, 
the way we resolve
these problems is as follows. First we use ${\group}$-equivariant spectra to serve as the source of the co-evaluation maps. Then we observe that 
for the underlying non-equivariant spectrum, associated to an equivariant spectrum, but viewed as an object in the category
${\widetilde {\Spt}}^{\rmG}({\rmS_{\rm mot}})$ (defined below), one can find functorial fibrant and cofibrant replacements in the latter category, and the functoriality implies that these objects come equipped with
compatible ${\group}$-actions. Further, we show in section ~\ref{comparison.cat.Spt} that the model category ${\widetilde {\Spt}}^{\rmG}({\rmS_{\rm mot}})$ is Quillen equivalent to the usual category of non-equivariant spectra $\Spt$ considered in
Definition ~\ref{usual.spectra}.
Therefore, the dual we define will be making use of such functorial cofibrant and fibrant replacements of the underlying non-equivariant spectra in ${\widetilde {\Spt}}^{\rmG}({\rmS_{\rm mot}})$ 
 and therefore, though they correspond to  duals in $\Spt$, they still come equipped with nice ${\group}$-actions. It is precisely these issues that make it necessary for us to introduce and work with the categories ${\widetilde {\Spt}}^{\rmG}({\rmS_{\rm mot}})$ 
	 and ${\widetilde {\Spt}}({\rmS_{\rm mot}})$ of spectra that come in between, and relate the category of equivariant spectra $\Spt^{\rmG}(\rmS_{\rm mot})$ with the category of spectra $\Spt(\rmS_{\rm mot})$.
\end{enumerate}
\end{remark}
\vskip .2cm
We now introduce the following
intermediate categories, denoted ${\widetilde {\Spt}}^{\rmG}({\rmS})$,
 and ${\widetilde {\Spt}}({\rmS})$, intermediate between $\Spt^{\rmG}(\rmS)$ and $\Spt(\rmS)$ defined in Definition ~\ref{usual.spectra}. \index{${\widetilde {\Spt}}^{\rmG}({\rmS})$} \index{${\widetilde {\Spt}}({\rmS})$}
 \begin{definition}
 \label{tildeSptG}
 \begin{enumerate}[\rm(i)]
\item The  $\C$-enriched category ${\widetilde {\Spt}}^{\rmG}({\rmS}) =[\Sph^{\rmG}_{\rmS}, \Spc_*(\rmS)]$. 
Therefore, the objects of this category
are $\C$-enriched functors 
$\tilde \X':\Sph^{\rmG}_{\rmS} \ra \C$, where $\C=\Spc_*(\rmS)$.
One may observe that an object in this category is given by $\{\tilde \X' (\rmT_{\rmV}^{\rmS})|\rmT_{\rmV}^{\rmS} \in \Sph^{\group}\}$,
provided with a compatible family of structure maps $\rmT_{\rmW}^{\rmS, \alpha} \wedge \tilde \X'(\rmT_{\rmV}^{\rmS}) \ra \tilde \X'(T_{W\oplus V}^{\rmS})$ in $\Spc_*(\rmS)$, with $\rmT_{\rmW}^{\rmS, \alpha}= \rmT_{\rmW}^{\rmS}$ as $\alpha$ varies over all
 homothety classes of $k$-linear $\rmG$-equivariant injective maps $\rmV \ra \rmV \oplus \rmW$. However, the maps $\rmT_{\rmW}^{\rmS, \alpha} \wedge \tilde \X'(\rmT_{\rmV}^{\rmS}) \ra \tilde \X'(T_{W\oplus V}^{\rmS})$ are no longer required to be
${\group}$-equivariant. 
\item
The { smash product and the internal hom} of spectra in ${\widetilde {\Spt}}^{\rmG}({\rmS})$ 
 are defined 
exactly as in the case of $\Spt^{\rmG}(\rmS)$, but making use of the category  $\Spc_*(\rmS)$ in the place of $\Spc_*(\rmS)^{\rmG}$.
\item
 ${\widetilde {\Spt}}^{\rmG}({\rmS_{\rm mot}})$ (${\widetilde {\Spt}}^{\rmG}(\rmS_{et})$) will denote the corresponding category
defined on the Nisnevich site by starting with $\Spc_*(\rmS_{\rm mot})$ (on the \'etale site by starting with $\Spc_*(\rmS_{et})$, \res) of $\Speck$.
\item
When $\cE^{\rmG} \in \Spt^{\rmG}(\rmS)$ is a commutative ring spectrum, one defines
the category ${\widetilde {\Spt}}^{\rmG}({\rmS}, {\cE^{\rmG}})$ similarly by replacing the pairings
$\rmT_{\rmW}^{\rmS} \wedge \X'(\rmT_{\rmV}) \ra \X'(T_{W\oplus V}^{\rmS})$ with the 
pairings: $\cE^{\rmG}(\rmT_{\rmW}^{\rmS}) \wedge \X'(\rmT_{\rmV}^{\rmS}) \ra \X'(T_{W\oplus V}^{\rmS})$.
\end{enumerate}
\end{definition}
\vskip .2cm
Observe that there is a forgetful functor
\be \begin{equation}
     \label{tildeU}
\tilde \rmU: \Spt^{\rmG}(\rmS) \ra {\widetilde {\Spt}}^{\rmG}({\rmS}) 
\end{equation} \ee
given by sending a $\X \in \Spt^{\rmG}(\rmS)$ to $\rmU \circ \X$, where $\rmU: \Spc_*^{\group}(\rmS) \ra \Spc_*(\rmS)$ is the forgetful functor. 
When $\cE^{\rmG} \in \Spt^{\rmG}(\rmS)$ is a commutative ring spectrum, one also obtains a forgetful functor $\tilde \rmU:\Spt^{\rmG}(\rmS, {\cE^{\rmG}}) \ra {\widetilde {\Spt}}^{\group}({\rmS}, {\cE^{\rmG}})$.
\vskip .2cm
\begin{definition}
 \label{tildeSpt.1}
 \begin{enumerate}[\rm(i)]
\item Let $\C $ denote the category $\Spc_*(\rmS)$. The  $\C$-enriched category ${\widetilde {\Spt}}({\rmS})
 = [\Sph_{\rmS}, \C]$. Therefore, the objects of this category 
 are given by
$\C$-enriched functors 
\be \begin{equation}
\label{USpt}
\X': \Sph_{\rmS} \ra \C.
\end{equation} \ee 
\vskip .1cm
Again, paraphrasing this,
such an object is given by 
 $\{\X'(\T^{\wedge n}_{\rmS})| n \ge 0\}$,
provided with a compatible family of structure maps $\T^{\wedge m}_{\rmS, \alpha} \wedge \X'(\T^{\wedge n}_{\rmS}) \ra \X'(\T^{\wedge m+n}_{\rmS})$ in $\Spc_*(\rmS)$, with $\T^{\wedge m}_{\rmS, \alpha}= \T^{\wedge m}_{\rmS}$ associated to each homothety class $\alpha$ of $k$-linear injective maps of ${\mathbb A}^n$ in ${\mathbb A}^{n+m}$, and the group of $k$-linear automorphisms of ${\mathbb A}^n$, (i.e., ${\rm GL}_n$) acts on $\X'({\T}^{\wedge n}_{\rmS})$. (In this sense, the category ${\widetilde {\Spt}}({\rmS})$ is similar to the category of what are called {\it orthogonal spectra}.) 
\vskip .1cm
Morphisms between two such objects 
$\{\Y'(\T^{\wedge n}_{\rmS})|n \ge 0\}$ and $\{\X'(\T^{\wedge n}_{\rmS})|n \ge 0\}$ are given by
compatible collections of maps $\{\Y'(\T^{\wedge n}_{\rmS}) \ra \X'(\T^{\wedge n}_{\rmS})|n \ge 0\}$ which are 
compatible with the pairings: 
$\T^{\wedge m}_{\rmS} \wedge \Y'(\T^{\wedge n}_{\rmS}) \ra \Y'(\T^{\wedge m+n}_{\rmS})$ and $\T^{\wedge m}_{\rmS} \wedge \X'(\T^{\wedge n}_{\rmS}) \ra \X'(\T^{n+m}_{\rmS})$. 
\item
${\widetilde {\Spt}}({\rmS_{\rm mot}})$ (${\widetilde {\Spt}}({\rmS_{et}})$) will denote the corresponding category
defined on the Nisnevich site (the \'etale site, \res) of $\Speck$.
\item 
The {smash product and the internal hom} of spectra in 
${\widetilde {\Spt}}({\rmS})$ are defined again
exactly as in the case of $\Spt^{\rmG}(\rmS)$, but making use of the categories $\Sph_{\rmS}$ and $\Spc_*(\rmS)$ in the place of $\Sph^{\rmG}_{\rmS}$ and $\Spc_*^{\rmG}(\rmS)$.
\item
Let $\tilde \rmU: \Spt^{\rmG}(\rmS) \ra {\widetilde {\Spt}}^{\rmG}(\rmS)$ and  $\tilde {\mathbb P}:{\widetilde {\Spt}}^{\rmG}(\rmS) \ra 
{\widetilde {\Spt}}(\rmS)$  denote the functors considered in  ~\eqref{passage.equiv.to.nonequiv}.
When $\cE^{\rmG} \in \Spt^{\rmG}(\rmS)$ is a commutative ring spectrum, one defines
the category ${\widetilde {\Spt}}({\rmS}, \tilde {\mathbb P} (\tilde \rmU({\cE^{\rmG}})))$ similarly by replacing the pairings
$\T^{\wedge m}_{\rmS} \wedge \X'(\T^{\wedge n}_{\rmS}) \ra \X'(\T^{n+m}_{\rmS})$ with the 
pairings: $\cE^{\rmG}(\T^{\wedge m}_{\rmS}) \wedge \X'(\T^{\wedge n}_{\rmS}) \ra \X'(\T^{n+m}_{\rmS})$.
\end{enumerate}


\end{definition}
\vskip .2cm
\begin{proposition}
 \label{tildeU.1}
Let $\X, \Y \in \Spt^{\rmG}(\rmS)$. Then, 
\be \begin{align}
 \tilde \rmU( \X \wedge \Y) = \tilde \rmU( X) \wedge \tilde \rmU( Y) &\mbox{ and }  \tilde \rmU (\Hom_{\Spt^{\rmG}(\rmS)}(\X, \Y)) = \Hom_{{\widetilde \Spt}^{\rmG}_{\rmS}}(\tilde \rmU(\X), \tilde \rmU(\Y)).
\end{align} \ee
Corresponding results hold for the categories $\Spt^{\rmG}(\rmS, {\cE^{\rmG}})$, ${\widetilde {\Spt}}^{\rmG}({\rmS}, {\cE^{\rmG}})$.
\end{proposition}
\begin{proof}
 The key observation is that the forgetful functor $\rmU: \Spc^{\rmG}_*(\rmS) \ra \Spc_*(\rmS)$ is a strict monoidal functor in the 
 sense $\rmU(\rmP \wedge^{\rmS} \rmQ) = \rmU(\rmP) \wedge^{\rmS} \rmU(\rmQ)$ and $\rmU(\Hom_{\rmG}(\rmP, \rmQ)) = \Hom(\rmU(\rmP), \rmU(\rmQ))$ as already observed in 
 ~\eqref{U.1}. In addition one also observes that the same forgetful functor preserves and 
 reflects all small colimits as well as all small limits. Therefore the definition of the smash product 
 (the internal hom) in the category $\Spt^{\rmG}(\rmS)$ as a co-end (end, \res) 
 in Definition ~\ref{smash.prdcts} along with the definition of the corresponding functors in
 ${\widetilde {\Spt}}^{\rmG}({\rmS})$ completes the proof.
 \end{proof}
\vskip .2cm
 \begin{terminology}
 \label{spectra.details}
 Model structures on the above categories of spectra: starting with the model structures on simplicial presheaves discussed 
 in section ~\ref{model.simpl.presh} one may put various model structures on the above categories of spectra. This is discussed in detail in the next two sections. We do not discuss the specific details of these model structures here, as we believe that will 
 take us away from our current discussion, except to point out that we choose to work with the category ${\widetilde {\Spt}}^{\rmG}(\rmS_{\rm mot})$
  and ${\widetilde {\Spt}}^{\rmG}(\rmS_{et})$ provided with injective stable model structures
 as discussed in  Proposition ~\ref{stable.model.1}: observe that  
 every object is cofibrant in this model structure. For the rest of the discussion in this section, we will implicitly make use of 
 this model structure. 
 \end{terminology}
 \begin{definition}
 \label{monoidal.functs}
 Let $\rmF: \C \ra \D$ denote a functor between monoidal categories.  
 We say $\rmF$ is a {\it weakly monoidal functor}, if for any pair of
objects $\X$, $\Y$ in $\C$, there is given a  natural map 
\be \begin{equation}
\label{monoidal.funct}
\mu: \rmF(\X) \otimes \rmF(\Y) \ra \rmF(\X \otimes \Y)
\end{equation} \ee
satisfying an associativity and unitality axiom 
as in \cite[Definition 6.4.1]{Bor}: note that
there it is called a morphism of monoidal categories, though the terminology we use seems more standard: see \cite{ncatlab}.
The unitality axiom says there is given a map $\epsilon:e_{\D} \ra \rmF(e_{\C})$, where $e_{\C}$ ($e_{\D}$) denotes the unit of the category $\C$ ($\D$, \res).
We say such a weakly monoidal functor is a {\it strong monoidal functor} ({\it strict monoidal functor}) if the map $\mu$ for all $\X$, $\Y$ in $\C$ and the 
map $\epsilon$ are isomorphisms (are the identity morphisms, \res).
\end{definition}
 \vskip .1cm
 \begin{definition} (Passing from equivariant spectra to non-equivariant spectra)
 \label{equiv.vs.nonequiv.spectra}
 One starts with the forgetful functor 
$\tilde \rmU: \Spt^{\rmG}(\rmS) \ra {\widetilde {\Spt}}^{\rmG}({\rmS})$. Since the indexing category for ${\widetilde {\Spt}}^{\rmG}({\rmS})$ is $\Sph^{\rmG}_{\rmS}$, while
 the indexing category for ${\widetilde {\Spt}}({\rmS})$ is $\Sph_{\rmS}$, the passage from ${\widetilde {\Spt}}^{\rmG}({\rmS})$ to ${\widetilde {\Spt}}({\rmS})$ is more involved.
 As discussed in ~\eqref{functor.tildeP}, this is carried out by a functor we denote by $\tilde {\mathbb P}$. 
 \vskip .1cm
 Let $\Spt(\rmS)$ denote the (usual) category of spectra indexed by the non-negative integers as in Definition ~\ref{usual.spectra}. Since ${\widetilde {\Spt}}({\rmS})$ is indexed by the 
 category $\Sph_{\rmS}$ which denote the Thom-spaces of all affine spaces $\{{\mathbb A}^n|n \ge 0\}$,
 there is an obvious functor $i^*: {\widetilde {\Spt}}({\rmS}) \ra \Spt(\rmS)$: see  ~\eqref{functor.P}.
 Thus the passage from $\rmG$-equivariant spectra in $\Spt^{\rmG}(\rmS)$ to the non-equivariant spectra $\Spt({\rmS})$ indexed by the non-negative integers 
 is defined by the sequence of functors:
 \be \begin{equation}
    \label{passage.equiv.to.nonequiv}
   \Spt^{\rmG}(\rmS) {\overset {\tilde \rmU} \ra}  {\widetilde {\Spt}}^{\rmG}({\rmS}) {\overset {\tilde {\mathbb P}} \ra} {\widetilde {\Spt}}({\rmS}) {\overset {{\it i}^*} \ra} \Spt({\rmS}).
 \end{equation} \ee
Of these the first two functors $\tilde \rmU$ and $\tilde {\mathbb P}$ are strong monoidal functors (in fact $\tilde \rmU$ is a strict monoidal functor), while the composition $i^* \circ {\mathbb P}$ is the identity,
where ${\mathbb P}: \Spt({\rmS}) \ra {\widetilde {\Spt}}({\rmS})$ is a functor left-adjoint to $i^*$ and which is also strong monoidal. (See Proposition ~\ref{comp.1}.)
Given a commutative ring spectrum $\cE^{\rmG}_{\rmS} \in \Spt^{\rmG}(\rmS) $, we let $\cE_{\rmS}  = i^*({\tilde {\mathbb P}}\tilde \rmU(\cE^{\rmG} )$, which is 
 a commutative ring spectrum in $\Spt(\rmS) $. For example, the equivariant sphere spectrum $\mbS^{\rmG}_{\rmS} $ provides 
  $\mbS_{\rmS} = i^*({\tilde {\mathbb P}}\tilde \rmU(\mbS^{\rmG}_{\rmS} ))$, the usual sphere spectrum.
  \qed
\end{definition}
  \vskip .2cm
{ Of key importance} is the observation that the $\tilde \rmU(\mbS^{\rmG}_{\rmS})$ is the unit 
of the category ${\widetilde {\Spt}}^{\rmG}({\rmS})$ with respect to the smash product in ${\widetilde {\Spt}}^{\rmG}({\rmS})$.
Similarly $\tilde \rmU(\cE^{\rmG})$ is the unit of
${\widetilde {\Spt}}(\rmS, {\cE^{\rmG}})$,  ${\tilde {\mathbb P}}({\tilde \rmU} (\mbS^{\group}_{\rmS}))$ is the unit of the category ${\widetilde {\Spt}}({\rmS})$,
and ${\tilde {\mathbb P}}({\tilde \rmU}(\cE^{\rmG}))$ is the unit of ${\widetilde {\Spt}}(\rmS, {\cE^{\rmG}})$ with respect to the corresponding smash products.  In view of this, we will henceforth denote $\tilde \rmU(\mbS^{\rmG}_{\rmS})$
, ${\tilde {\mathbb P}}({\tilde \rmU}(\mbS^{\rmG}_{\rmS}))$ by $\mbS^{\rmG}_{\rmS}$ and $ \tilde \rmU(\cE^{\rmG})$, $ {\tilde {\mathbb P}}({\tilde \rmU}(\cE^{\rmG}))$ by $\cE^{\rmG}$. 
\begin{proposition}
 \label{comp.2}
 Let $\X \in \Spt^{\rmG}(\rmS) $ and let $\tilde \rmU(\X) \in {\widetilde {\Spt}}^{\rmG}({\rmS}) $ denote the forgetful functor
 $\tilde \rmU$ (as in ~\eqref{tildeU}) applied to $\X$. If $\tilde \alpha: \tilde \X'' \ra \tilde \rmU(\X)$ ($\tilde \beta: \tilde \rmU(\X) \ra \tilde \X'$) is
 a functorial cofibrant (fibrant) replacement in the injective or projective  stable model structure on ${\widetilde {\Spt}}^{\rmG}({\rmS}) $,
  then there exists  $\X'$ and $\X''$ in $\Spt^{\rmG}(\rmS) $, and maps $\alpha: \X'' \ra \X$, $\beta: \X \ra \X'$ in $\Spt^{\rmG}(\rmS) $
  so that $\tilde \rmU(\alpha) = \tilde \alpha$ and $\tilde \rmU(\beta) = \tilde \beta$. 
\end{proposition}
\begin{proof} Recall that the linear algebraic group $\rmG$ acts on a simplicial presheaf section-wise. Therefore, the
  functoriality of the cofibrant and fibrant replacements as in the proof of Proposition ~\ref{functorial.rep.inhert.G.act}, shows that if $\tilde \X'' \ra \tilde \rmU(\X)$ is a cofibrant replacement 
  of the spectrum $\tilde \rmU (\X)$ in ${\widetilde {\Spt}}^{\rmG}({\rmS})$, the following squares commute for all $U$ in the site, are compatible with 
  the restriction along $U' \ra U$ 
  in the site, for all $g \eps \Gamma (U, \rmG)$, all $\T_{\rmW}^{\rmS}$ and $\T_{\rmV}^{\rmS} \eps \Sph^{\rmG}_{\rmS}$:
  
  \be \begin{equation}
 \label{tower.diagm}
 \xymatrix{ {\Gamma(U, \T_{\rmW}^{\rmS}) \wedge \Gamma (U, \tilde \X''(\T_{\rmV}^{\rmS}))} \ar@<1ex>[ddd]^{} \ar@<1ex>[dr]^{g \wedge g} \ar@<1ex>[rrr]^{} &&& {\Gamma (U, \tilde \X''(\T_{\rmW\oplus \rmV}^{\rmS}))} \ar@<1ex>[dl]_g \ar@<1ex>[ddd]^{}\\
            & {\Gamma(U, \T_{\rmW}^{\rmS}) \wedge \Gamma (U, \tilde \X''(\T_{\rmV}^{\rmS}))} \ar@<1ex>[r]^(.4){} \ar@<1ex>[d]^{} & {\Gamma (U, \tilde \X''(\T_{\rmW\oplus \rmV}^{\rmS}))} \ar@<1ex>[d]\\
            & {\Gamma(U, \T_{\rmW}^{\rmS}) \wedge \Gamma (U, \X(\T_{\rmV}^{\rmS}))} \ar@<1ex>[r]^(.4){} & {\Gamma (\rmU, \X(\T_{\rmW\oplus \rmV}^{\rmS}))} \\
            {\Gamma(U, \T_{\rmW}^{\rmS}) \wedge \Gamma (U, \X(\T_{\rmV}^{\rmS}))} \ar@<1ex>[ur]^{g \wedge g} \ar@<1ex>[rrr]^{} &&& {\Gamma (U, \X(\T_{\rmW\oplus \rmV}^{\rmS}))} \ar@<1ex>[ul]_{g} .}
  \end{equation}  \ee     
 It follows that the functorial cofibrant replacement $\tilde \X''$ of $\tilde \rmU(\X)$ in ${\widetilde {\Spt}}^{\rmG}({\rmS})$
  inherits a $\rmG$-action from the $\rmG$-action on
  $\X$, making it belong to $\Spt^{\rmG}(\rmS)$. A corresponding result holds for the functorial fibrant replacement as well.
\end{proof}
\vskip .3cm
\subsubsection{\bf Derived functors of $\wedge$, the internal $\Hom$ and the dual ${\rmD}$ for equivariant spectra}
\label{equiv.nonequiv.spectra.0}
Recall that the functor $\tilde \rmU: \Spt^{\rmG}(\rmS) \ra {\widetilde {\Spt}}^{\rmG}({\rmS})$ 
is a strict monoidal functor.
Let $\rmM , \rmN \in \Spt^{\rmG}(\rmS)$. The fact that
one may find functorial cofibrant and fibrant replacements of objects in ${\widetilde {\Spt}}^{\group}({\rmS})$ shows that one may find
a functorial cofibrant replacement $ \tilde \alpha:\tilde \rmM ''\ra \tilde \rmU (\rmM)$ in ${\widetilde {\Spt}}^{\group}({\rmS})$ and a 
functorial fibrant replacement $\tilde \beta: \tilde \rmU (\rmN) \ra \tilde \rmN'$ in ${\widetilde {\Spt}}^{\group}({\rmS})$. The functoriality of the cofibrant and fibrant replacements, 
  shows as in Proposition ~\ref{comp.2}  that in fact that there exists 
$ \rmM''$, $ \rmN'$ and maps 
$ \alpha:\rmM'' \ra \rmM$, $\beta: \rmN \ra \rmN'$ in  $\Spt^{\rmG}(\rmS)$, with $\tilde \rmU( \alpha) = \tilde \alpha$ and $\tilde \rmU(\beta) = \tilde \beta$. 
Therefore, we define 
\be \begin{equation}
     \label{Dual}
\rmM {\overset L {\wedge}} \rmN =  \rmM'' \wedge \rmN, \quad \RHom(\rmM, \rmN) = \Hom( \rmM'', \rmN '), \quad {\rmD}(\rmM) = \RHom(\rmM'', (\mbS^{\group})')
\end{equation} \ee
\vskip .2cm \noindent
with $\rmM {\overset L{\wedge}} \rmN, \RHom(\rmM, \rmN), D(\rmM)  \in \Spt^{\rmG}(\rmS)$. (In fact, since we choose to work with
the injective model structures, every object is cofibrant and therefore there is no need for any cofibrant replacements.)
Similar conclusions 
will hold when $ \cE^{\rmG} \in \Spt^{\rmG}(\rmS)$ is a commutative ring
spectrum with the corresponding  smash product $\wedge _{{\cE}^{\rmG}_{\rmS}}$ and $\Hom_{\cE^{\rmG}}$ defined in ~\eqref{wedgeE}. (In this case the dual with respect to the ring spectrum $\cE_{\rmS}$ will denoted ${\rmD}_{\cE_{\rmS}}$.)
\be \begin{definition}(The associated homotopy categories).
The homotopy category associated to $\Spt^{\rmG}({\rmS_{\rm mot}})$ will be denoted $\SH^{\rmG}(\rmS_{\rm mot})$, while the homotopy category
  associated to ${\widetilde {\Spt}}^{\group}({\rmS_{\rm mot}})$ will be denoted $\widetilde \SH^{\rmG}(\rmS_{\rm mot})$ and the homotopy category associated to
  ${\widetilde {\Spt}}({\rmS_{\rm mot}})$ will be denoted $ \widetilde \SH(\rmS_{\rm mot})$. Similarly the homotopy category associated to $\Spt({\rmS_{\rm mot}})$ will be 
  denoted $\SH(\rmS_{\rm mot})$, often denoted just $\SH(\rmS)$. The corresponding \'etale variants will be denoted by the subscript {\rm et} in the place of the subscript {\rm mot}.
  \vskip .1cm
  For a commutative ring spectrum $\cE^{\rmG} \eps \Spt^{\rmG}({\rmS_{\rm mot}})$, the homotopy category associated to $\Spt^{\rmG}({\rmS_{\rm mot}}, \cE^{\rmG})$ will be denoted
  $\SH^{\rmG}(\rmS_{\rm mot}, \cE^{\rmG})$  while the homotopy category
  associated to ${\widetilde {\Spt}}^{\group}(\rmS_{\rm mot}, \cE^{\rmG})$ will be denoted $\widetilde \SH^{\rmG}(\rmS_{\rm mot}, \cE^{\rmG})$ and the homotopy category associated to
  ${\widetilde {\Spt}}(\rmS_{\rm mot}, {\tilde {\mathbb P}}\tilde \rmU(\cE^{\rmG}))$ will be denoted $\widetilde \SH^{\rmG}(\rmS_{\rm mot}, {\tilde {\mathbb P}}\tilde \rmU(\cE^{\rmG}))$. Similarly the homotopy category associated to $\Spt({\rmS_{\rm mot}}, \cE_{\rmS})$ will be 
  denoted $\SH(\rmS_{\rm mot}, \cE_{\rmS})$ (or simply $\SH(\rmS_{\rm mot}, \cE_{\rmS})$), where $\cE_{\rmS} = i^*({\tilde {\mathbb P}}\tilde \rmU(\cE^{\rmG} )$ is the associated non-equivariant spectrum.
 
\end{definition} \ee

\vskip .2cm
\section{Model structures for categories of spectra} \index{model structures: spectra}
\label{model.struct.spectra}
One starts with the categories of spectra  $\Spt^{\rmG}(\rmS)$, ${\widetilde {\Spt}}^{\rmG}({\rmS})$ and ${\widetilde {\Spt}}({\rmS})$ considered in 
  Definitions ~\ref{smash.prdcts}, \ref{tildeSptG} and ~\ref{tildeSpt.1}. In case 
$\cE^{\rmG}$ is a commutative ring spectrum in $\Spt^{\rmG}(\rmS)$ (${\widetilde {\Spt}}^{\rmG}({\rmS})$) or if $\cE$ is a commutative ring spectrum in ${\widetilde {\Spt}}({\rmS})$ 
we will also consider the corresponding category $\Spt^{\rmG}(\rmS, {\cE^{\rmG}})$ (${\widetilde {\Spt}}^{\group}({\rmS}, {\cE^{\rmG}})$) and ${\widetilde {\Spt}}({\rmS}, {\cE})$
of module spectra.
\vskip .2cm
\subsection{Level-wise model structures}
Throughout this discussion, we will assume the situation where $ \Spc_*(\rmS)$ denotes the category of pointed simplicial presheaves, 
and $\Spc_*^{\rmG}(\rmS)$ which is the category of pointed simplicial presheaves with $\rmG$-action, both
pointed over $\rmS$ on either the big \'etale or the big Nisnevich or the big Zariski site over a fixed perfect field $k$. 
\vskip .1cm
$ \Spc_*(\rmS)$ will be provided with a 
chosen model structure, namely the motivic model structure in the case of the Nisnevich site (see ~\ref{Nis.presh}) and the \'etale model structure (see  Theorem ~\ref{et.model.struct}) in the case of the \'etale site which
are both based on the projective model structures or the alternate injective model structures discussed in ~\ref{inj.model}. Observe that
every object in $ \Spc_*(\rmS)$ is cofibrant in the injective model structure. 
\vskip .1cm
$\Spc_*^{\rmG}(\rmS)$ will be provided with one of the model structures provided by Theorem ~\ref{model.structs.equiv.presh}. Since the 
generating cofibrations are defined as in Definition ~\ref{equiv.model.struct}, not every object is cofibrant even in the corresponding
 injective model structure on $\Spc_*^{\rmG}(\rmS)$.
 \index{model structures: level-wise injective}
\label{USptG}
\vskip .2cm
\subsubsection{\bf The level-wise injective model structures}
We will start with the injective model structures on $ \Spc_*(\rmS)$.
Here we make use of \cite[Proposition A.3.3.2]{Lur}. The first observation is that the model categories  
$\Spt_*(\rmS_{\rm mot})$ and $\Spt_*(\rmS_{et})$, when provided with the injective model 
structures, are {\it excellent} in the sense of \cite[A.3.2.16]{Lur}: this means they are combinatorial,
every monomorphism is a cofibration, cofibrations are stable under products, the weak-equivalences are stable under filtered colimits, the smash product
$\wedge ^{\rmS}$ is a left  Quillen functor and it satisfies the invertibility hypothesis. (The last may be deduced from the category of
simplicial sets as in \cite[Lemma A.3.2.20]{Lur} by observing that the functor sending a simplicial set to the constant simplicial presheaf
is a left Quillen functor as required.) Therefore, the required model structures follow from \cite[Proposition A.3.3.2]{Lur}, and
the discussion below should be just spelling out the details.
\vskip .1cm
Here we define a map $f:\chi' \ra \chi$ of spectra in ${\widetilde {\Spt}}^{\rmG}({\rmS})$ to be a {\it level-wise injective cofibration} (a {\it level-wise injective weak-equivalence}) if the induced map $f(\rmT_{\rmV}^{\rmS}): \chi'(\rmT_{\rmV}^{\rmS}) \ra \chi(\rmT_{\rmV}^{\rmS})$ is a  cofibration (a weak-equivalence, \res) for each $\rmT_{\rmV}^{\rmS} \in \Sph^{\rmG}_{\rmS}$. The {\it level-wise injective fibrations} are defined by the lifting property with
respect to trivial cofibrations.  One defines the level-wise injective model structure on
the categories $\Spt^{\rmG}(\rmS)$ and ${\widetilde {\Spt}}({\rmS})$ similarly. 
\begin{proposition}
 \label{inj.model.struct}
 \begin{enumerate}[\rm(i)]
\item
This defines a combinatorial (in fact, tractable) simplicial monoidal model structure on ${\widetilde {\Spt}}^{\rmG}({\rmS})$ that is left proper. 
\item
Every level-wise injective fibration is a level fibration, that is if $f:\chi' \ra \chi$ is a fibration in the level-wise injective model structure,
each of the induced maps $\chi'(\rmT_{\rmV}^{\rmS}) \ra \chi(\rmT_{\rmV}^{\rmS})$ is a fibration. 
\item
The cofibrations are the monomorphisms. 
\item
The unit of the monoidal structure on ${\widetilde {\Spt}}^{\rmG}({\rmS})$ and in fact every object in ${\widetilde {\Spt}}^{\rmG}({\rmS})$ is cofibrant in this model structure. 
\item
The corresponding results hold
for the categories $\Spt^{\rmG}(\rmS)$ and ${\widetilde {\Spt}}({\rmS})$. 
\end{enumerate}
\end{proposition}
\begin{proof} We will only discuss the proofs for the category ${\widetilde {\Spt}}^{\rmG}({\rmS})$ since the proofs in the other two cases are quite similar. We start with the observation that the categories $\C=\Spc_*(\rmS)$, $\C^{\rmG}=\Spc_*^{\rmG}(\rmS)$ are simplicially enriched tractable simplicial model categories.
 The left-properness is obvious, since the cofibrations and weak-equivalences are defined level-wise.
 The first conclusion follows now from \cite[Proposition A.3.3.2]{Lur}: observe that the pushout-product axiom holds
since cofibrations (weak-equivalences) are injective cofibrations (weak-equivalences, \res) and the pushout-product
 axiom holds in the monoidal model category $\C$. This proves the first statement for ${\widetilde {\Spt}}^{\rmG}({\rmS})$,
  as well as for ${\widetilde {\Spt}}({\rmS})$.
 \vskip .1cm
 The second statement follows from Proposition ~\ref{key.props.unstable}(iii)
  making use of the adjunction between the functors ${\mathcal Eval}_{\rmT_{\rmV}^{\rmS}}$ and ${\mathcal F}_{\rmT_{\rmV}^{\rmS}}$ discussed below.
The third statement follows readily since we start with the injective model structure on $ \Spc_*(\rmS)$.
  Recall the unit of ${\widetilde {\Spt}}^{\rmG}({\rmS})$ is the functor
$\Sph ^{\group}_{\rmS} \ra \C$, which is the sphere spectrum $\mbS^{\rmG}_{\rmS}$. To prove it is cofibrant, all one has to observe is that 
$\mbS^{\rmG}_{\rmS}(\rmT_{\rmV}^{\rmS})=\rmT_{\rmV}^{\rmS}$ which is cofibrant in $\Spc_*(\rmS)$ for every $\rmT_{\rmV}^{\rmS} \in \Sph^{\group}_{\rmS}$. 
It should be clear that the same arguments hold for the categories $\Spt^{\rmG}(\rmS)$ and ${\widetilde {\Spt}}({\rmS})$.
\end{proof}
\subsubsection{\bf The level-wise projective model structures}\index{model structures: level-wise projective}
We will consider explicitly only ${\widetilde {\Spt}}^{\rmG}({\rmS})$. We will start with the projective model structure on $\Spt_*(\rmS_{\rm mot})$.
First we functorially replace every object $\rmT_{\rmV}^{\rmS}$ in $\Sph^{\rmG}_{\rmS}$ by
  an object that is cofibrant in $\C = \Spc_*(\rmS)$. The functoriality of the cofibrant replacement shows that then, these
  functorial cofibrant replacements all come equipped with $\rmG$-actions. Therefore, we will still denote these
  cofibrant replacements by $\{\rmT_{\rmV}^{\rmS}|\rmV\}$. In the case of $\Spt^{\rmG}$, we need to do the same with the category $\C$ replaced by
  $\C^{\rmG} = \Spc_*^{\rmG}(\rmS)$.
  \vskip .1cm
 We should also point out that the work of \cite[Theorems 4.2, 4.4]{DRO1} in fact provides such a model structure on ${\widetilde {\Spt}}^{\rmG}({\rmS_{\rm mot}})$, ${\widetilde {\Spt}}({\rmS}_{\rm mot})$ and on ${{\Spt}}^{\rmG}({\rmS}_{\rm mot})$ and that they extend readily to the 
 corresponding categories defined on the etale site. Therefore, the discussion below should be viewed
  as summarizing their results in this case. 
 \vskip .1cm
The weak-equivalences (fibrations) in the {\it level-wise projective model structure} are those maps of spectra
$f:\X \ra \Y$, for which each $f(\rmT_{\rmV}^{\rmS}): \X(\rmT_{\rmV}^{\rmS}) \ra \Y(\rmT_{\rmV}^{\rmS})$, $ \rmT_{\rmV}^{\rmS} \in \Sph^{\rmG}_{\rmS}$,
 are weak-equivalences (fibrations, \res) in $\C=\Spc_*(\rmS_{\rm mot})$. The cofibrations in this model structure are defined
 by left-lifting property with respect to the maps that are trivial fibrations in this model structure.
 \vskip .2cm
Next let ${\mathcal F}_{\rmT_{\rmV}^{\rmS}}$ denote the left-adjoint to the evaluation functor ${\mathcal Eval}_{\rmT_{\rmV}^{\rmS}}$ sending a spectrum $\X \in {\widetilde {\Spt}}^{\group}({\rmS}_{\rm mot})$ ($\X \in {\widetilde {\Spt}}({\rmS}_{\rm mot})$, $\Spt^{\rmG}(\rmS_{\rm mot})$) to $\X(\rmT_{\rmV})$. One may observe that this is the spectrum defined by 
\be \begin{align}
 \label{free.sp.1}
 {\mathcal F}_{{\rmT_{\rmV}^{\rmS}}}(C) (\rmT_{\rmV \oplus \rmW}^{\rmS}) &= (\underset {\alpha: \rmV \ra \rmV \oplus \rmW} \sqcup C \wedge \rmT_{\rmW}^{\rmS}) \sqcup *, \rmW \neq \{0\}\\
		 	&= (\underset {\alpha: \rmV \ra \rmV } \bigvee C )\sqcup *, \rmW=\{0\} \notag
\end{align} \ee 
For each ${\rmT}_{\rmV}^{\rmS}$, let ${\mathcal R}_{\rmT_{\rmV}^{\rmS}}$ denote the $\Spc_*(\rmS_{\rm mot})$-enriched functor defined by 
\[{\mathcal R}_{\rmT_{\rmV}^{\rmS}}(\rmP)(\rmT_{\rmW}^{\rmS})= \Hom_{\Spc_*(\rmS)}((\sqcup_{\alpha}\rmT_{\rmU}^{\rmS, \alpha})_+, \rmP)\]
when $\rmV= \rmW\oplus \rmU$, and the sum
 is indexed by homothety classes of $k$-linear injective maps $\rmU \ra \rmV$. (When $\rmV $ is not of the form $\rmU \oplus \rmW$, we 
 let ${\mathcal R}_{\rmT_{\rmV}^{\rmS}}(\rmP)(\rmT_{\rmW}^{\rmS}) = \rmS^0$.)
 
 Then ${\mathcal R}_{\rmT_{\rmV}^{\rmS}}: \Spc_*(\rmS_{\rm mot}) \ra {\widetilde {\Spt}}^{\group}({\rmS}_{\rm mot})$ is right adjoint to the functor ${\mathcal Eval}_{\rmT_{\rmV}^{\rmS}}$. One defines 
 a right adjoint ${\mathcal R}_{\rmT_{\rmV}^{\rmS}}$ to ${\mathcal Eval}_{\rmT_{\rmV}^{\rmS}}: {\widetilde {\Spt}}({\rmS}_{\rm mot}) \ra \Spc_*(\rmS_{\rm mot})$ and to
 ${\mathcal Eval}_{\rmT_{\rmV}^{\rmS}}: {\Spt}^{\group}({\rmS}_{\rm mot}) \ra  \Spc_*^{\rmG}(\rmS_{\rm mot})$ similarly.
 \vskip .2cm
Let $\oI$ ($\oJ$) denote the generating cofibrations (generating trivial cofibrations, \res ) of
the model category $\Spc_*(\rmS_{\rm mot})$. 
We define the generating cofibrations $\oI_{{\widetilde {\Spt}}^{\rmG}({\rmS}_{\rm mot})}$ (the generating trivial cofibrations $\oJ_{{\widetilde {\Spt}}^{\rmG}({\rmS}_{\rm mot})}$) to be 
\be \begin{equation}
\label{gen.cofibs}
\bigcup_{\rmT_{\rmV}^{\rmS} \in \Sph^{\rmG}_{\rmS}} \{\F_{\rmT_{\rmV}^{\rmS}}(i)\mid i \in \oI \} \quad (\bigcup_{\rmT_{\rmV}^{\rmS} \in \Sph^{\rmG}}\{\F_{\rmT_{\rmV}^{\rmS}}(j)|j \in \oJ\}) \, \, .
\end{equation} \ee
\vskip .1cm \noindent 
One defines the generating cofibrations $\oI_{{\widetilde {\Spt}}({\rmS}_{\rm mot})}$ ($\oI_{\Spt^{\rmG}(\rmS_{\rm mot})}$) (the generating trivial cofibrations $\oJ_{{\widetilde {\Spt}}({\rmS}_{\rm mot})}$  ($\oJ_{\Spt^{\rmG}(\rmS_{\rm mot})}$)) of the 
level-wise projective model structure on ${{\widetilde {\Spt}}({\rmS}_{\rm mot})}$ ($\Spt^{\rmG}(\rmS_{\rm mot})$) similarly.
\begin{proposition} 
\label{key.props.unstable}
\begin{enumerate}[\rm(i)]
\item If $A \eps \Spc_*(\rmS_{\rm mot})$ is small relative to the cofibrations (trivial cofibrations) 
 in $\Spc_*(\rmS_{\rm mot})$, then $\F_{T_V^{\rmS}}(A)$ is small relative $\oI_{{\widetilde {\Spt}}^{\rmG}({\rmS}_{\rm mot})}$.
\item A map $f$ in ${{\widetilde {\Spt}}^{\rmG}({\rmS}_{\rm mot})}$  is a level cofibration if and only if it has the left lifting property
 with respect to all maps of the form $\R_{T_V^{\rmS}}(g)$ where $g$ is a trivial fibration (fibration, \res)in $\Spc_*(\rmS_{\rm mot})$.
A map $f$ in ${{\widetilde {\Spt}}^{\rmG}({\rmS}_{\rm mot})}$  is a level trivial cofibration if and only if it has the left lifting property
 with respect to all maps of the form $\R_{T_V^{\rmS}}(g)$ where $g$ is a fibration in $\Spc_*(\rmS_{\rm mot})$.
\item Every map in $\oI_{{\widetilde {\Spt}}^{\rmG}({\rmS}_{\rm mot})}-cof$  is a level cofibration and every map in $\oJ_{{\widetilde {\Spt}}^{\rmG}({\rmS}_{\rm mot})}-cof$ 
is a level trivial cofibration. (Here $\oI_{{\widetilde {\Spt}}^{\rmG}({\rmS}_{\rm mot})}-cof$ ($\oJ_{{\widetilde {\Spt}}^{\rmG}({\rmS}_{\rm mot})}-cof$) denotes the cofibrations generated by $\oI_{{\widetilde {\Spt}}^{\rmG}({\rmS}_{\rm mot})}$ ($\oJ_{{\widetilde {\Spt}}^{\rmG}({\rmS}_{\rm mot})}$, \res).
\item The domains of $\oI_{{\widetilde {\Spt}}^{\rmG}({\rmS}_{\rm mot})}$ ($\oJ_{{\widetilde {\Spt}}^{\rmG}({\rmS}_{\rm mot})}$) are small relative to $\oI_{{\widetilde {\Spt}}^{\rmG}({\rmS}_{\rm mot})}-cell$ ($\oJ_{{\widetilde {\Spt}}^{\rmG}({\rmS}_{\rm mot})}-cell$, \res). 
\item Corresponding results hold for  ${{\widetilde {\Spt}}({\rmS}_{\rm mot})}$ and $\Spt^{\rmG}(\rmS_{\rm mot})$.
\end{enumerate}
\end{proposition}
\begin{proof}
(i) The main point here is that the functor ${\mathcal E}val_{T_V^{\rmS}}$ being right adjoint to $\F_{T_V^{\rmS}}$ commutes 
with all small colimits.
 \vskip .3cm
(ii) Since $\R_{T_V^{\rmS}}$ is right adjoint to ${\mathcal E}val_{T_V^{\rmS}}$,  $f$ has the left lifting property with 
respect to
$\R_{T_V^{\rmS}}(g)$  if and only if ${\mathcal E}val_{T_V^{\rmS}}(f)$  has the left-lifting property with respect to $g$. (ii) follows
readily from this observation.
\vskip .3cm
(iii) Recall every object of $\Spc_*(\rmS_{\rm mot})$ is assumed to be cofibrant in the injective model structure and that in the projective model
structure we first replace every object $\T_{\rmU}^{\rmS} \in \Sph^{\rmG}$ functorially by a cofibrant replacement. Therefore, smashing with  any $\rmT_{\rmU}^{\rmS}$
preserves cofibrations of $\Spc_*(\rmS)$ with either the injective or the projective model structures. Therefore, every map in $\oI_{{\widetilde {\Spt}}^{\rmG}({\rmS}_{\rm mot})}$ is a level cofibration. By (ii) this means
 $\R_{T_V^{\rmS}}(g) \eps \oI_{{\widetilde {\Spt}}^{\rmG}({\rmS}_{\rm mot})}-inj$ for all trivial fibrations $g$ in $\Spc_*(\rmS_{\rm mot})$. Recall every map in $\oI_{{\widetilde {\Spt}}^{\rmG}({\rmS})}-cof$ has
the left lifting property with respect to every map in $\oI_{{\widetilde {\Spt}}^{\rmG}({\rmS}_{\rm mot})}-inj$ and in particular with 
respect to every 
map $\R_{T_V^{\rmS}}(g)$, with $g$ a trivial fibration in $\Spc_*(\rmS_{\rm mot})$. Now the adjunction between ${\mathcal E}val_{T_V^{\rmS}}$ and $\R_{T_V^{\rmS}}$
completes the proof for $\oI_{{\widetilde {\Spt}}^{\rmG}({\rmS}_{\rm mot})}-cof$. The proof for $\oJ_{{\widetilde {\Spt}}^{\rmG}({\rmS}_{\rm mot})}-cof$ and for $\Spt^{\rmG}(\rmS_{\rm mot})$ is similar. 
\vskip .3cm
(iv) follows readily in view of the adjunction between the free functor $\F_{T_V^{\rmS}}$ and ${\mathcal E}val_{T_V^{\rmS}}$. 
\end{proof}
\vskip .3cm

\begin{proposition} 
\label{level.model.1}
 The projective cofibrations, the level fibrations and level equivalences define a cofibrantly 
generated model category structure on ${\widetilde {\Spt}}^{\rmG}({\rmS}_{\rm mot})$ with the generating cofibrations (generating trivial
cofibrations) being $\oI_{{\widetilde {\Spt}}^{\rmG}({\rmS}_{\rm mot})}$ ($\oJ_{{\widetilde {\Spt}}^{\rmG}({\rmS}_{\rm mot})}$, \res). This model structure (called {\it the 
level-wise projective model structure})
  has the following properties:
\be \begin{enumerate}[\rm (i)]
\item Every projective cofibration (projective trivial cofibration)  is a level cofibration (level trivial cofibration, \res). 
\item{It is left-proper, right proper and is cellular.}
\item The objects in 
$\bigcup_{\rmT_{\rmV} \in \Sph^{\rmG}}\{\F_{\rmT_{\rmV}}(\Sph^{\rmG})\}$ are all finitely presented. The category ${\widetilde {\Spt}}^{\rmG}({\rmS}_{\rm mot})$
is symmetric monoidal with the pairing defined in  Definition ~\ref{tildeSptG}(ii).
\item {This category is locally presentable and hence  is a tractable (and hence a combinatorial)
model category.}
\item{With the above structure, ${\widetilde {\Spt}}^{\rmG}({\rmS}_{\rm mot})$ is a symmetric monoidal model category satisfying the monoidal axiom.}
\item Corresponding results hold for the level-wise projective model structure on ${{\widetilde {\Spt}}({\rmS}_{\rm mot})}$ and $\Spt^{\rmG}(\rmS_{\rm mot})$ as well
as on the corresponding categories defined on the \'etale sites.
\end{enumerate} \ee
\end{proposition}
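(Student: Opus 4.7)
The plan is to transfer the cofibrantly generated model structure from $\C$ to ${\widetilde {\USpt}}^{\rmG}$ along the adjunctions $\F_{\rmT_{\rmV}} \dashv \mathrm{ev}_{\rmT_{\rmV}}$, and then verify the additional properties one at a time. First I would apply the standard recognition theorem for cofibrantly generated model structures (as in Hirschhorn, Theorem 11.3.1, or Hovey, Theorem 2.1.19) with $\oI_{{\widetilde {\USpt}}^{\rmG}}$ and $\oJ_{{\widetilde {\USpt}}^{\rmG}}$ as defined in ~\eqref{gen.cofibs}. The hypotheses to check are: (a) the domains of maps in $\oI_{{\widetilde {\USpt}}^{\rmG}}$ and $\oJ_{{\widetilde {\USpt}}^{\rmG}}$ are small with respect to the corresponding cell complexes (this follows since $\F_{\rmT_{\rmV}}$ preserves filtered colimits and the corresponding domains in $\C$ are small); (b) every relative $\oJ_{{\widetilde {\USpt}}^{\rmG}}$-cell complex is both a level cofibration and a level weak equivalence (this reduces, via the explicit formula ~\eqref{free.sp.1}, to the fact that smashing with any cofibrant $\rmT_{\rmW}$ preserves trivial cofibrations in $\C$, which holds because $\C$ is a monoidal model category where every $\rmT_{\rmW}$ is cofibrant); and (c) the $\oI_{{\widetilde {\USpt}}^{\rmG}}$-injectives coincide with the intersection of $\oJ_{{\widetilde {\USpt}}^{\rmG}}$-injectives and level weak equivalences (a routine adjunction argument).

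Once the basic model structure is in place, property (i) follows since each $\F_{\rmT_{\rmV}}(i)$ for $i \in \oI$ is by inspection a level cofibration (it is only nontrivial in levels $\rmT_{\rmV \oplus \rmW}$, where it is $i \wedge \mathrm{id}_{\rmT_{\rmW}}$, a cofibration in $\C$ by the pushout-product axiom), and similarly for trivial cofibrations; the class of level cofibrations (level trivial cofibrations) is closed under pushouts, transfinite composition and retracts, hence contains all projective (trivial) cofibrations. Left and right properness in (ii) are inherited from $\C$ since pushouts, pullbacks, cofibrations, fibrations and weak equivalences are all computed level-wise. Cellularity reduces, via the description of cells, to cellularity of $\C$. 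Property (iii) on finite presentation follows because $\F_{\rmT_{\rmV}}$ preserves colimits (being a left adjoint) and each $\rmT_{\rmW} \in \Sph^{\rmG}$ is finitely presented in $\C$; the symmetric monoidal structure was already established in Definition ~\ref{smash.prdcts} and Proposition ~\ref{SphG.symm.mon}. Local presentability in (iv) follows from local presentability of $\C$ and the fact that ${\widetilde {\USpt}}^{\rmG}$ is a $\C$-enriched functor category on the small enriched category $\Sph^{\rmG}$; tractability is then immediate since the generating (trivial) cofibrations have cofibrant domains, as these domains are of the form $\F_{\rmT_{\rmV}}(C)$ with $C$ cofibrant in $\C$.

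The main obstacle, and the step I expect to require the most care, is property (v): the pushout-product and monoidal axioms with respect to the Day convolution smash product. For the pushout-product axiom, using the fact that $\oI_{{\widetilde {\USpt}}^{\rmG}}$ is generated by the $\F_{\rmT_{\rmV}}(i)$, and the identity $\F_{\rmT_{\rmV}}(C) \wedge \F_{\rmT_{\rmW}}(D) \cong \F_{\rmT_{\rmV \oplus \rmW}}(C \wedge D)$ (which holds for any Day convolution along a monoidal functor), the problem reduces to the pushout-product axiom in $\C$ together with cofibrancy of $\rmT_{\rmV \oplus \rmW}$ in $\C$. For the monoidal axiom proper (that smashing a trivial cofibration with any object yields a weak equivalence), one either invokes the fact that every object is cofibrant in the level-wise injective model structure on $\C$ (so the analogue already holds there, and then transfers), or argues directly using the level-wise description of the smash product combined with the monoidal axiom for $\C$. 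Property (vi) for $\USpt^{\rmG}$ then follows by exactly the same arguments, mutatis mutandis, replacing $\Sph^{\rmG}$ by $\USph^{\rmG}$ throughout; the only difference is that the morphism sets in $\USph^{\rmG}$ are indexed by homothety classes of all $k$-linear injections rather than $\rmG$-equivariant ones, which does not affect any of the verifications above.
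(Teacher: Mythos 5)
The paper does not actually include an in-text proof of this proposition; immediately preceding the statement it remarks that the result is ``proven in \cite[Proposition 3.10]{CJ14} and in \cite[Corollary 3.11]{CJ14}'' and defers entirely to that earlier work, so there is no internal argument to compare yours against. That said, your route --- transferring the cofibrantly generated model structure on $\C$ along the adjunctions $\F_{\rmT_{\rmV}} \dashv \mathrm{ev}_{\rmT_{\rmV}}$, invoking the recognition theorem, and then checking properness, cellularity, finite presentability, local presentability, and the pushout-product and monoidal axioms --- is the standard way to build level-wise projective model structures on enriched diagram categories, and it is evidently what \cite{CJ14} does as well; the paper's own surrounding discussion (reducing the monoidal axiom to the corresponding injective model structure where every object is cofibrant) is precisely one of the two options you describe for property (v), and the identity $\F_{\rmT_{\rmV}}(C) \wedge \F_{\rmT_{\rmW}}(D) \cong \F_{\rmT_{\rmV \oplus \rmW}}(C \wedge D)$ is the right tool for the pushout-product axiom.

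One small correction to the smallness step: the relevant fact is not that $\F_{\rmT_{\rmV}}$ preserves filtered colimits (true, since it is a left adjoint, but not what is needed here), but that the right adjoint $\mathrm{ev}_{\rmT_{\rmV}}$ does. One has $\mathrm{Hom}(\F_{\rmT_{\rmV}}(C), \operatorname{colim}_\alpha \rmY_\alpha) \cong \mathrm{Hom}(C, (\operatorname{colim}_\alpha \rmY_\alpha)(\rmT_{\rmV})) \cong \mathrm{Hom}(C, \operatorname{colim}_\alpha \rmY_\alpha(\rmT_{\rmV})) \cong \operatorname{colim}_\alpha \mathrm{Hom}(C, \rmY_\alpha(\rmT_{\rmV}))$, using first that filtered colimits in a $\C$-enriched functor category are computed object-wise and then the smallness of $C$ in $\C$. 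Since evaluation is automatically cocontinuous here this is not a serious gap, but the reasoning as written points at the wrong adjoint and should be rephrased.
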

\begin{proof} 
First we sketch a proof showing the existence of a cofibrantly generated model category structure.
The retract and two out of three axioms for level equivalences are 
immediate, as is the lifting axiom
for a projective cofibration and a level trivial fibration. Clearly a map is a level trivial
fibration if and only if it is in $\oI_{{\widetilde {\Spt}}^{\rmG}({\rmS}_{\rm mot})}-inj$ and a map is a projective cofibration if and only if it is in
$\oI_{{\widetilde {\Spt}}^{\rmG}({\rmS}_{\rm mot})}-cof$. Now Proposition ~\ref{key.props.unstable}(iv) shows that \cite[Theorem 2.1.14]{Hov01} applied to $\oI_{{\widetilde {\Spt}}^{\rmG}({\rmS}_{\rm mot})}$ then produces
a functorial factorization of a map as the composition of a projective cofibration followed by a level trivial fibration.
\vskip .2cm
By adjunction, a map is a level fibration if and only if it is in $\oJ_{{\widetilde {\Spt}}^{\rmG}({\rmS}_{\rm mot})}-inj$. Proposition ~\ref{key.props.unstable}(iii) shows that
every map in $\oJ_{{\widetilde {\Spt}}^{\rmG}({\rmS}_{\rm mot})}-cof$ is a level equivalence. Such maps have left-lifting property with respect to all level   fibrations and hence
with respect to all level trivial fibrations. Now Proposition ~\ref{key.props.unstable}(iv) shows that \cite[Theorem 2.1.14]{Hov01} applied to 
$\oJ_{{\widetilde {\Spt}}^{\rmG}({\rmS}_{\rm mot})}$ then produces
a functorial factorization of a map as the composition of a projective  cofibration which is also a level equivalence 
followed by a level fibration. 
\vskip .2cm
Next we show that any projective cofibration and level equivalence $f$ is
in $\oJ_{{\widetilde {\Spt}}^{\rmG}({\rmS}_{\rm mot})}-cof$, and hence has the left lifting property with respect to level fibrations.
To see this, we factor $ f = pi$ where $i$ is in $\oJ_{{\widetilde {\Spt}}^{\rmG}({\rmS}_{\rm mot})}-cof$ and $p$ is in $\oJ_{{\widetilde {\Spt}}^{\rmG}({\rmS}_{\rm mot})}-inj$. Then $p$ is a
level fibration. Since $f$ and $i$ are both level equivalences, so is $p$. Therefore $p$ is a level trivial fibration and $f$ has the
left lifting property with respect to $p$. This shows $f$ is a retract of $i$: see, for example, \cite[Lemma 1.1.9]{Hov99}.  In particular 
$f$ belongs to  $\oJ_{{\widetilde {\Spt}}^{\rmG}({\rmS}_{\rm mot})}-cof$. These prove the existence of the projective model structure on ${\widetilde {\Spt}}^{\rmG}({\rmS}_{\rm mot})$.
Clearly it is cofibrantly generated.
\vskip .2cm
Statement (i) is essentially Proposition ~\ref{key.props.unstable}(iii). Since colimits and limits in ${\widetilde {\Spt}}^{\rmG}({\rmS}_{\rm mot})$ are taken level-wise, the statements in (ii) are clear.
The first assertion in (iii) is clear since the objects in the subcategory $\Sph^{\rmG}$ are assumed to be finitely presented in $\C$. 
The assertions in (iii) on the monoidal structure follow from 
a theorem of Day: see \cite{Day}. 
 Statements (iv) and (v) follows from \cite[Theorems 4.2, 4.4]{DRO1}.
\end{proof}
\vskip .2cm
\subsubsection{Module spectra over a ring spectrum}
\label{modsp.ringsp}
Let $\cE^{\rmG} \in { {\Spt}}^{\rmG}({\rmS})$ denote a ring spectrum, and let $\tilde \rmU(\cE^{\rmG}) \in {\widetilde {\Spt}}^{\rmG}({\rmS})$ 
($\cE =\tilde {\mathbb P}(\tilde \rmU(\cE^{\rmG})) \in  {{\widetilde {\Spt}}({\rmS})}$) denote the associated ring spectra.
One then invokes the free $\cE^{\rmG}$-module
	functor and the forgetful functor sending an $\cE^{\rmG}$-module spectrum to
	its underlying spectrum along with \cite[Lemma 2.3, Theorem 4.1(2)]{SSch} to obtain
	a corresponding cofibrantly generated model category structure on ${{\Spt}}^{\rmG}({\rmS}, {\cE^{\rmG}})$   (${\widetilde {\Spt}}^{\rmG}({\rmS}, \tilde \rmU({\cE^{\rmG}}))$, 
	${\widetilde {\Spt}}({\rmS}, {\cE})$, \res). Observe that in this model structure the fibrations are those maps $f$ in ${\Spt}^{\rmG}({\rmS}, {\cE^{\rmG}})$  for which $f$
		is a fibration in $ {\Spt}^{\rmG}({\rmS})$ and similarly for the other two model categories considered here.

\subsection{\bf The stable model structures on ${\widetilde {\Spt}}^{\rmG}({\rmS})$, ${\widetilde {\Spt}}^{\group}({\rmS}, \tilde \rmU({\cE^{\group}}))$, $\Spt^{\rmG}(\rmS)$, $\Spt^{\rmG}(\rmS, {\cE^{\rmG}})$ \\ and on ${\widetilde {\Spt}}({\rmS})$, ${\widetilde {\Spt}}({\rmS}, {\cE})$.}
\label{stable.model}
We proceed to define the stable model structure by applying a suitable Bousfield
localization to the level-wise injective (projective)model structures considered above. This follows the approach in
\cite[section 3]{Hov01}. We will explicitly consider only the case of ${\widetilde {\Spt}}^{\group}({\rmS})$, since essentially the same description applies to the categories 
${\widetilde {\Spt}}({\rmS})$, ${\widetilde {\Spt}}({\rmS}, {\cE})$, ${\widetilde {\Spt}}^{\group}({\rmS}, \tilde \rmU({\cE^{\group}}))$, $\Spt^{\rmG}(\rmS)$ and $\Spt^{\rmG}(\rmS, {\cE^{\rmG}})$, with the only difference that while considering the last two categories 
$\Spt^{\rmG}(\rmS)$ and $\Spt^{\rmG}(\rmS, {\cE^{\rmG}})$, 
any reference
to the category $\Spc_*(\rmS)$ will have to be replaced by the category $\Spc_*(\rmS)^{\rmG}$. The corresponding model structure will be called the {\it the injective (projective) stable
 model structure}. (One may observe that the domains and co-domains of objects of the generating cofibrations are 
cofibrant, so that there is no need for a cofibrant replacement functor $\rmQ$ as in \cite[section 3]{Hov01}.)
\index{model structures: stable}
\vskip .1cm 
Let $\X \in {\widetilde {\Spt}}^{\rmG}({\rmS})$. Since $\X$ is a $\Spc_*(\rmS)$-enriched functor $\Sph^{\rmG}_{\rmS} \ra \Spc_*(\rmS)$,
we obtain a natural map 
\be \begin{equation}
\label{Omega.0}
 (\sqcup_{\alpha} \rmT_{\rmW}^{\rmS,\alpha})_+ = \Hom_{\Sph^{\rmG}}(\rmT_{\rmV}^{\rmS}, \rmT_{\rmV}^{\rmS}\wedge \rmT_{\rmW}^{\rmS}) \ra \Hom_{\Spc_*(\rmS)}(\X(\rmT_{\rmV}^{\rmS}), \X(\rmT_{\rmV\oplus \rmW}^{\rmS}))),
\end{equation} \ee
where $\rmT_{\rmW}^{\rmS, \alpha}$ is a copy of $\rmT_{\rmW}^{\rmS}$ indexed by $\alpha$, and where $\alpha$ varies over all homothety classes of $k$-linear injective and $\rmG$-equivariant maps $\rmV\ra \rmV\oplus \rmW$.
\begin{definition} ($\Omega$-spectra)
 A spectrum ${ \chi} \in {\widetilde {\Spt}}^{\group}({\rmS})$  is an  {\it $\Omega$-spectrum} if it is level-wise fibrant and 
each of the natural maps
${ \chi}(\rmT_{\rmV}^{\rmS}) \ra \Hom_{\C}(\rmT_{\rmW}^{\rmS, \alpha}, { \chi}(\rmT_{\rmV}^{\rmS} \wedge \rmT_{\rmW}^{\rmS}))$, for each $\alpha$ as in ~\eqref{Omega.0} is an unstable weak-equivalence 
in the corresponding model structure on $\Spc_*(\rmS)$.
\end{definition}
\vskip .3cm
Let ${\mathcal F}_{\rmT_{\rmV}^{\rmS}}$ denote the left-adjoint to the evaluation functor sending a spectrum $\X \in {\widetilde {\Spt}}^{\group}({\rmS})$  to $\X(\rmT_{\rmV}^{\rmS})$: see ~\eqref{free.sp.1}.
Let $\rmC \in \Spc_*(\rmS)$ be an object that is cofibrant, and let $\chi \in {\widetilde {\Spt}}^{\group}({\rmS})$ be fibrant in the level-wise injective (projective) model structure. Then 
\[Map(\rmC, \chi(\rmT_{\rmV}^{\rmS})) =Map(\rmC, {\mathcal E}val_{\rmT_{\rmV}^{\rmS}}(\chi)) \simeq Map(\F_{\rmT_{\rmV}^{\rmS}}(\rmC), \chi) \mbox{ and }\]
\[ Map(\rmC, \Hom_{\C}(\rmT_{\rmW^{\rmS, \alpha}}, \chi(\rmT_{\rmV}^{\rmS} \wedge \rmT_{\rmW}^{\rmS}))) = Map(\rmC, \Hom_{\C} (\rmT_{\rmW^{\rmS, \alpha}}, {\mathcal E}val_{\rmT_{\rmV}^{\rmS}\wedge \rmT_{\rmW}^{\rmS}}(\chi))) \simeq Map(\F_{\rmT_{\rmV}^{\rmS} \wedge \rmT_{\rmW}^{\rmS}}(\rmC \wedge \rmT_{\rmW^{\rmS, \alpha}} ), \chi).\]
Therefore,  to convert
$\chi$ into an $\Omega$ -spectrum, it suffices to invert the maps in ${\mathbf S}$, where
\be \begin{equation}
\label{S.1}     
{\mathbf S} = \{{\mathcal F}_{\rmT_{\rmV}^{\rmS} \wedge \rmT_{\rmW}^{\rmS}}(\rmC \wedge \rmT_{\rmW^{\rmS, \alpha}} ) \ra {\mathcal F}_{\rmT_{\rmV}^{\rmS}}(\rmC) \mid \rmC \in
\mbox{ Domains or Co-domains of } \oI, \rmT_{\rmV}, \rmT_{\rmW} \in \Sph^{\group}, \alpha \}
\end{equation} \ee
\vskip .3cm \noindent
corresponding to the above  maps $\rmC \wedge \rmT_{\rmW^{\rmS, \alpha}}  \ra \rmC \wedge \Hom_{\Sph^{\rmG}}(\rmT_{\rmV}^{\rmS}, \rmT_{\rmV}^{\rmS} \wedge \rmT_{\rmW}^{\rmS})$ by adjunction, as $\alpha$ varies over all homothety classes of $k$-linear injective $\rmG$-equivariant maps $\rmV \ra \rmV\oplus \rmW$. (Here ${\rm I}$ denotes the generating cofibrations of $\Spc_*(\rmS)$.)
Similarly, for a commutative ring spectrum $\cE^{\rmG} \in {\widetilde {\Spt}}^{\group}({\rmS})$, one lets ${\mathbf S}_{\cE^{\rmG}}$ be defined
using the corresponding free-functors for $\cE^{\rmG}$-module-spectra. (See \cite[Proposition 3.2]{Hov01} that shows it suffices to consider
the objects $\rmC$ that form the domains and co-domains of the generating cofibrations in $\Spc_*(\rmS)$.)

\vskip .2cm
The {\it stable injective (projective) model structure} on ${\widetilde {\Spt}}^{\group}({\rmS})$ (${\widetilde {\Spt}}^{\group}({\rmS}, \tilde \rmU({\cE^{\rmG}}))$) is obtained by localizing the level-wise injective (projective) model
structure with respect to the maps in ${\mathbf S}$ (${\mathbf S}_{\cE^{\rmG}}$, \res). 
The ${\mathbf S}$-local weak-equivalences
(${\mathbf S}$-local fibrations) will be referred to as the {\it stable equivalences} 
({\it stable fibrations}, \res). The cofibrations in the localized model structure are the 
cofibrations in the level-wise projective or injective model structures on ${\widetilde {\Spt}}^{\group}({\rmS})$ (${\widetilde {\Spt}}^{\group}({\rmS}, \tilde \rmU({\cE^{\rmG}}))$, \res).

\begin{proposition} 
\label{stable.model.1}
\begin{enumerate}[\rm(i)]
\item The corresponding stable model structure on ${\widetilde {\Spt}}^{\group}({\rmS})$ (${\widetilde {\Spt}}^{\group}({\rmS}, {\cE^{\rmG}})$) is cofibrantly 
generated and left proper. It is also
locally presentable, and hence combinatorial (tractable).
\item The fibrant objects in the stable model structure on ${\widetilde {\Spt}}^{\group}({\rmS})$ (${\widetilde {\Spt}}^{\group}({\rmS}, {\cE^{\rmG}})$) are the
 $\Omega$-spectra defined above. 
\item The category ${\widetilde {\Spt}}^{\group}({\rmS})$ (${\widetilde {\Spt}}^{\group}({\rmS}, {\cE^{\rmG}})$) is a symmetric monoidal model category 
(i.e., satisfies the pushout-product axiom: see \cite[Definition  3.1]{SSch}) in  the 
injective stable model structures with
the monoidal structure being the  same in both the
 model structures. In the injective model structure, the unit is cofibrant and the monoidal axiom (see \cite[Definition 3.3]{SSch})
is also satisfied.
\item The first two statements also hold for the categories ${{\Spt}}^{\group}({\rmS})$, ${{\Spt}}^{\group}({\rmS}, {\cE^{\rmG}})$, while
all three statements hold also for 
${\widetilde {\Spt}}({\rmS})$ and ${\widetilde {\Spt}}({\rmS}, {\cE})$.
\end{enumerate}
\end{proposition}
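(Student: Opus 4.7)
The plan is to obtain the stable model structure as a left Bousfield localization of the level-wise model structures (Propositions~\ref{inj.model.struct} and~\ref{level.model.1}) at the set ${\mathbf S}$ defined in~\eqref{S.1}. First I would invoke the standard existence theorem for left Bousfield localization of a combinatorial, left proper model category at a set of morphisms, due to J.~Smith (see, e.g., Barwick's ``On left and right model categories and left and right Bousfield localizations'', or Lurie's HTT, Proposition A.3.7.3, together with Hirschhorn's treatment for the cellular/projective case). Since Propositions~\ref{inj.model.struct} and~\ref{level.model.1} show that the level-wise injective (resp.\ projective) structure on ${\widetilde {\USpt}}^{\rmG}$ is tractable and left proper, this yields the localized model category with the same cofibrations, inheriting cofibrant generation, left properness, and tractability. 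This handles (i). The module category ${\widetilde {\USpt}}^{\rmG}_{\cE^{\rmG}}$ is then handled, as in~\ref{modsp.ringsp}, by transferring the localized structure through the free/forgetful adjunction for $\cE^{\rmG}$-modules.

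For (ii), I would use the general characterization that a fibrant object in the left Bousfield localization at ${\mathbf S}$ is precisely a level-fibrant object that is ${\mathbf S}$-local, i.e.\ such that $Map(B,\chi)\to Map(A,\chi)$ is a weak equivalence of simplicial sets for every $(A\to B)\in{\mathbf S}$. The elements of ${\mathbf S}$ have the form $\F_{\rmT_{\rmV\oplus \rmW}}(C\wedge \rmT_{\rmW}^{\alpha})\to \F_{\rmT_{\rmV}}(C)$ with $C$ a domain or codomain of a generating cofibration of $\C$. Using the free/evaluation adjunction $\F_{\rmT_{\rmV}}\dashv\mathcal{E}val_{\rmT_{\rmV}}$ precisely as in the discussion preceding~\eqref{S.1}, the ${\mathbf S}$-local condition translates into
\[
Map(C,\chi(\rmT_{\rmV}))\;\longrightarrow\;Map\bigl(C,\Hom_{\C}(\rmT_{\rmW}^{\alpha},\chi(\rmT_{\rmV\oplus\rmW}))\bigr)
\]
being a weak equivalence for all generating $C$ and all $\rmV,\rmW,\alpha$. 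Since the domains and codomains of generating cofibrations detect weak equivalences between fibrant objects in $\C$, this is equivalent to the structure map $\chi(\rmT_{\rmV})\to\Hom_{\C}(\rmT_{\rmW}^{\alpha},\chi(\rmT_{\rmV\oplus\rmW}))$ being a weak equivalence, i.e.\ to the $\Omega$-spectrum condition.

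For (iii), since the cofibrations of the localized and unlocalized structures coincide, the pushout-product axiom for cofibrations is inherited from the level-wise statement in Propositions~\ref{inj.model.struct} and~\ref{level.model.1}. What remains is to check that smashing with a cofibrant object sends stable equivalences to stable equivalences, and the analogous trivial-cofibration version. Following Hovey's framework for monoidal stabilization (``Spectra and symmetric spectra in general model categories'', Theorem 9.3), this reduces to checking that smashing each map in ${\mathbf S}$ with a domain or codomain of a generating cofibration of ${\widetilde {\USpt}}^{\rmG}$ yields a stable equivalence. The computation
\[
\F_{\rmT_{\rmV}}(C)\wedge \F_{\rmT_{\rmV'}}(C')\;\cong\;\F_{\rmT_{\rmV\oplus\rmV'}}(C\wedge C')
\]
shows that these smash products are again of the form appearing in ${\mathbf S}$ (up to reindexing by $\alpha$), hence are stable equivalences by construction. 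In the injective structure, every object is cofibrant, so the monoidal axiom follows; the unit $\mbS^{\rmG}$, being the inclusion $\Sph^{\rmG}\hookrightarrow\C$ with $\mbS^{\rmG}(\rmT_{\rmV})=\rmT_{\rmV}$, is cofibrant since monomorphisms are injective cofibrations.

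The main obstacle I anticipate is precisely the monoidal axiom in (iii): showing that smashing with any cofibrant object preserves stable equivalences (not just stable trivial cofibrations as demanded by pushout-product). In the injective structure this is eased by universal cofibrancy, but one still needs a careful cell-induction argument using that ${\mathbf S}$ was defined only with $C$ ranging over generating cofibrations of $\C$, extending stability under smash-product from the generators to all cofibrant objects. For the projective structure, a further verification that the explicit generating set $\oI_{{\widetilde {\USpt}}^{\rmG}}$ is closed up to stable equivalence under the requisite pushout-products is needed, which is standard but technically delicate; Hovey's general criterion handles both simultaneously once the generator-level check is in hand.
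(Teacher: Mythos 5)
Your proof is correct and follows essentially the same route the paper (via its citation to \cite[Proposition 3.16]{CJ14}) takes: left Bousfield localization of the level-wise tractable, left-proper structures at the set ${\mathbf S}$ following \cite[section 3]{Hov01}, with the $\Omega$-spectrum characterization of fibrants obtained from the $\F_{\rmT_{\rmV}}\dashv \mathcal{E}val_{\rmT_{\rmV}}$ adjunction, and the monoidal structure checked by reducing the pushout-product axiom to generators via $\F_{\rmT_{\rmV}}(C)\wedge \F_{\rmT_{\rmV'}}(C')\cong\F_{\rmT_{\rmV\oplus\rmV'}}(C\wedge C')$. Your observation that in the injective structure the monoidal axiom comes for free from universal cofibrancy plus pushout-product, and that the unit $\mbS^{\rmG}=\tilde\rmU(\mbS^{\rmG})$ is cofibrant level-wise, is exactly the intended argument.
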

\begin{proof} The proof of the first statement in (i) is entirely similar to the proof of \cite[Theorem 3.4]{Hov01} and is 
therefore skipped. (Since $\Spc_*(\rmS)$ is left proper (cellular), so is the projective model structure on ${\widetilde {\Spt}}^{\group}({\rmS})$ (which is
a category of $\Spc_*(\rmS)$-enriched functors $\Sph^{\rmG}_{\rmS} \ra \Spc_*(\rmS)$ as proved above. It is shown in \cite[Proposition 3.4.4 and Theorem 4.1.1]{Hirsch} that then the localization of the projective model structure
is also left proper and cellular.) The fact it is locally presentable and hence combinatorial follows from the corresponding property of the
unstable model categories. Our hypotheses show that the domains of the maps in $\rmI$ and $\rmJ$ are cofibrant. These prove all
the statements in (i). (ii) is clear. 
\vskip .2cm
Clearly, since the monoidal structure is the same as in the unstable setting, the above category of spectra is clearly symmetric monoidal. Now to
prove it is a monoidal model category, it suffices to prove that the pushout-product axiom holds. 
Moreover, since an enriched functor 
$\X:\Sph^{\rmG}_{\rmS} \ra \Spc_*(\rmS)$ is cofibrant if ${\X}(\rmT_{\rmV}^{\rmS})$ is cofibrant in $\Spc_*(\rmS)$ for every $\rmT_{\rmV}^{\rmS} \eps \Sph^{\rmG}$ and every object of $\Spc_*(\rmS)$ is assumed to be 
cofibrant, it follows that
every such functor is cofibrant in the injective stable model structure.
Thus every object is cofibrant in the injective model structure, and therefore the pushout-product axiom and the monoidal axiom
are satisfied. It is clear that the unit is cofibrant in the injective model structure.   This proves (iii).

 \end{proof}   
\section{Comparison of several model categories of spectra} \index{model structures: comparison}
\label{comparison.cat.Spt}
 \vskip .1cm
 We adopt the framework of  ~\ref{Nis.presh} and ~\ref{et.presh}.
 Let $\Spt(\rmS_{\rm mot})$ denote the (usual) category of motivic spectra defined as in Definition ~\ref{usual.spectra}.  Recall  its objects are 
 \[\X=\{\rmX_n \in \Spc_*(\rmS_{\rm mot}), \mbox{ along with structure maps } \T^{\wedge m} \wedge \rmX_n \ra \rmX_{n+m} |n, m \in {\mathbb N}\}.\]  Morphisms between two such objects  $\X$ and $\Y$ are defined as compatible collection of maps
  $\X_n \ra \Y_n$, $n \in {\mathbb N}$ compatible with suspensions by $\T^{\wedge m}$, $m \in {\mathbb N}$. 
  $\Spt(\rmS_{et})$ will denote the corresponding category of $\T$-spectra defined on the big \'etale site of $\Speck$.
  \vskip .2cm
  We proceed to relate the
  category $\Spt(\rmS_{\rm mot})$ with ${\widetilde {\Spt}}({\rmS}_{\rm mot})$ and the category $\Spt(\rmS_{et})$ with 
  ${\widetilde {\Spt}}({\rmS}_{et})$, which were defined in Definition ~\ref{tildeSpt.1}. In order to handle
  both at the same time, we will let $\Spt(\rmS)$ denote either $\Spt(\rmS_{\rm mot})$ or $\Spt(\rmS_{et})$ and ${\widetilde {\Spt}}({\rmS})$ will denote 
  either ${\widetilde {\Spt}}({\rmS}_{\rm mot})$ or ${\widetilde {\Spt}}({\rmS}_{et})$.
  \vskip .2cm
  
  We will now identify ${\mathbb N}$ with
 the $\Spc_*(\rmS)$-enriched subcategory of $\Sph_{\rmS}$ consisting of the objects 
 $\{\T^{\wedge n}_{\rmS}|n \ge 0\}$
 and where
 \be \begin{align}
  \label{Spt}
      \Hom_{{\mathbb N}}(\T^{\wedge n}_{\rmS}, \T^{\wedge n+m}_{\rmS}) &= \T^{\wedge m}_{\rmS}, \mbox {if } m > 0\\
						&= \rmS^0, \mbox{if } m=0. \notag
 \end{align} \ee
 Next we define a faithful functor of $\Spc_*(\rmS)$-enriched categories 
 $i: {\mathbb N} \ra \Sph_{\rmS}$ as follows. We send each object 
  $\T^{\wedge n}_{\rmS}$ to itself. We send 
  the $\T^{\wedge m}_{\rmS}$ on the right hand side of ~\eqref{Spt} to the $\T^{\wedge m}_{\rmS}$
on the right-hand-side of ~\eqref{enrich.hom} indexed by the imbedding of ${\mathbb A}^n$ in ${\mathbb A}^{n+m}$ as the
first $n$-factors when $m>0$, while we also send the $\rmS^0$ on the right-hand-side of ~\eqref{enrich.hom}
to the summand $\rmS^0$ indexed by the identity map ${\mathbb A}^n \ra {\mathbb A}^n$ appearing 
in the right-hand-side of ~\eqref{enrich.hom}, when $m=0$.
Thus, we obtain a $\Spc_*(\rmS)$-enriched {\it faithful functor} $i: {\mathbb N} \ra \Sph_{\rmS}$. 
 The functor $i^*$ defines a simplicially enriched functor ${\widetilde {\Spt}}({\rmS}) \ra \Spt(\rmS)$. The functor $i^*$ admits a 
 left adjoint, which we denote by 
 \be \begin{equation}
 \label{functor.P}
 {\mathbb P}: \Spt(\rmS) \ra {\widetilde {\Spt}}({\rmS}).
 \end{equation} \ee
  One defines both a projective, as well as an injective model
  structure on the category $\Spt(\rmS)$, both level-wise and stably: this may be done just as in the last section and therefore we skip the details. Though for the most part we
  will only work with the injective model structures, the projective model structures seem helpful for
  comparing the model categories $\Spt(\rmS)$ and ${\widetilde {\Spt}}({\rmS})$. 
  
  \vskip .1cm
  The free functor $\Spc_*(\rmS) \ra \Spt(\rmS) $ left adjoint to the evaluation functor $Eval_{\T^{\wedge n}_{\rmS}}: \Spt(\rmS) \ra \Spc_*(\rmS)$, sending 
  $\X \mapsto \X(\T^{\wedge n}_{\rmS})$ will be denoted $F_{\T^{\wedge n}_{\rmS}}$. Let $\oI$ denote the set of generating cofibrations of the model category $\Spc_*(\rmS)$ provided 
  with the projective model structure.
  The stable model structure on $\Spt(\rmS)$ will
  be obtained by inverting maps in 
  \be \begin{equation}
\label{S.2}     
{\mathbf S}_{\mathbb N} = \{{ F}_{\T^{\wedge n}_{\rmS} \wedge \T^{\wedge m}_{\rmS}}(\rmC \wedge \T^{\wedge m}_{\rmS} ) \ra { F}_{\T^{\wedge n}_{\rmS}}(\rmC) \mid \rmC \in
\mbox{ Domains or Co-domains of } \oI, m, n \in {\mathbb N} \}.
\end{equation} \ee
\vskip .1cm
  The free functor $\Spc_*(\rmS) \ra {\widetilde {\Spt}}(\rmS) $ left adjoint to the evaluation functor $Eval_{\T^{\wedge n}_{\rmS}}: {\widetilde {\Spt}}(\rmS) \ra \Spc_*(\rmS)$, sending 
  $\X \mapsto \X(\T^{\wedge n}_{\rmS})$ will be denoted ${\mathcal  F}_{\T^{\wedge n}_{\rmS}}$. Let $\oI$ again denote the set of generating cofibrations of the model category $\Spc_*(\rmS)$ provided 
  with the projective model structure.
  The stable model structure on ${\widetilde {\Spt}}(\rmS)$ will
  be obtained by inverting maps in 
  \be \begin{equation}
\label{S.3}     
\tilde {\mathbf S}_{\mathbb N} = \{{\mathcal F}_{\T^{\wedge n}_{\rmS} \wedge \T^{\wedge m}_{\rmS}}(\rmC \wedge \T^{\wedge m}_{\rmS} ) \ra {\mathcal F}_{\T^{\wedge n}_{\rmS}}(\rmC) \mid \rmC \in
\mbox{ Domains or Co-domains of } \oI, m, n \in {\mathbb N} \}.
\end{equation} \ee

\vskip .1cm \noindent  
 We will provide both ${\widetilde {\Spt}}({\rmS})$ and $\Spt(\rmS)$ with
  the projective level-wise and the corresponding projective stable model structures. 
     \vskip .1cm
  
  \begin{remark}
    Observe that a  key difference between the two categories $\Sph_{\rmS}$ and ${\mathbb N}$ is that the simplicially
    enriched hom in the category $\Sph_{\rmS}$ has many symmetries making $\Sph_{\rmS}$ a symmetric monoidal category and  much bigger than the 
    indexing category ${\mathbb N}$ for $\Spt(\rmS)$. 
    Nevertheless we proceed to show that at the homotopy category level, the categories ${\widetilde {\Spt}}({\rmS})$ and $\Spt(\rmS)$ are
    equivalent. (This should be viewed as the analogue of the equivalence between the homotopy categories of orthogonal spectra and spectra in the topological setting.)
  \end{remark}
  \vskip .2cm
  Next we proceed to relate the categories ${\widetilde {\Spt}}^{\rmG}(\rmS)$ and ${\widetilde {\Spt}}(\rmS)$.
  At this point the reader may want to consult section ~\ref{equiv.sph.sp.0}. One may then recall that the indexing category for
  the category ${\widetilde {\Spt}}^{\rmG}(\rmS)$ is ${\rm Sph}^{\rmG}_{\rmS}$, which is the $\Spc_*^{\rmG}(\rmS)$-enriched category whose objects are the Thom-spaces
  $\{{\rm T}_{\rmV}^{\rmS}|\rmV\}$ of all finite dimensional representations of the given linear algebraic group $\rmG$, while the indexing category for the category 
  ${\widetilde {\Spt}}({\rmS})$ is $\{\T^{\wedge n}_{\rmS}|n \ge 0\}$, and where 
  the $\Spc_*(\rmS)$-enriched hom between $\T^{\wedge n}_{\rmS} $ and $\T^{\wedge n+m}_{\rmS}$ is defined in Definition ~\ref{enrich.hom}.
\vskip .1cm
  To relate the $\Spc_*(\rmS)$-enriched categories, ${\widetilde {\Spt}}({\rmS})$ and ${\widetilde {\Spt}}^{\rmG}({\rmS})$, one first observes that
  there is a forgetful functor $j: \Sph^{\rmG}_{\rmS} \ra \Sph_{\rmS}$ that sends the Thom-space, $\rmT_{\rmV}^{\rmS}$,
  of a $\rmG$-representation $\rmV$ to $\rmT_{\rmV}^{\rmS}$ but viewing $\rmV$ as just a $k$-vector space, forgetting the $\rmG$-action. Therefore, pull-back by $j$ 
  defines the $\Spc_*(\rmS)$-enriched functor $j^*:{\widetilde {\Spt}}({\rmS}) \ra {\widetilde {\Spt}}^{\rmG}({\rmS}) $. One defines a functor 
  \be \begin{equation}
  \label{functor.tildeP}
  \tilde {\mathbb P}: {\widetilde {\Spt}}^{\rmG}({\rmS}) \ra {\widetilde {\Spt}}({\rmS})
  \end{equation} \ee
  as the left-adjoint to $j^*$. For a $\X \in {\widetilde {\Spt}}^{\rmG}({\rmS})$, $\tilde {\mathbb P}(\X)$ is  defined
  as a $\Spc_*(\rmS)$-enriched left Kan-extension along the functor $j: \Sph^{\rmG}_{\rmS} \ra \Sph_{\rmS}$. Moreover,  the stable projective model structure on
  ${\widetilde {\Spt}}^{\rmG}({\rmS})$ is obtained from the level-wise projective model structure by inverting maps in
  the  collection ${\mathbf S}$ defined  in ~\eqref{S.1}.
\vskip .2cm  
 We proceed to show that the $\Spc_*(\rmS)$-enriched stable model categories ${\widetilde {\Spt}}^{\rmG}({\rmS})$, ${\widetilde {\Spt}}({\rmS})$ and
  $\Spt(\rmS)$ are Quillen equivalent. The proof will compare both ${\widetilde {\Spt}}^{\rmG}({\rmS})$ and $\Spt(\rmS)$ with ${\widetilde {\Spt}}({\rmS})$.
  Since both these comparisons proceed similarly, we deal with them both in the following proposition.  
\begin{proposition}\footnote{We skip the proof that the injective and projective stable model structures appearing below are Quillen equivalent, which may be proven in the usual manner.}
 \label{comp.1}
 \begin{enumerate}[\rm(i)]
 \item The functors ${\mathbb P}$ and $i^*$ define a Quillen adjunction between the projective stable model structures on
 ${\widetilde {\Spt}}({\rmS})$ and $\Spt(\rmS)$. This is, in fact, a Quillen equivalence.
 
\item The functors $\tilde {\mathbb P}$ and $j^*$ define a Quillen-equivalence between the stable projective model structures 
 on ${\widetilde {\Spt}}({\rmS})$ and ${\widetilde {\Spt}}^{\rmG}({\rmS})$.
 \item The functors ${\mathbb P}$ and $\tilde {\mathbb P}$ are strong monoidal functors.
 \end{enumerate}
\end{proposition}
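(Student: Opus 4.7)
The plan is to treat parts (i) and (ii) in parallel, as both concern adjunctions induced by a $\C$-enriched functor between indexing categories, namely $i:{\mathbb N}\to\USph^{\rmG}$ in the first case and $j:\Sph^{\rmG}\to\USph^{\rmG}$ in the second. In each case the right adjoint is the pullback functor and the left adjoint is the corresponding $\C$-enriched left Kan extension. The crucial geometric observation underlying (i) is that after forgetting the $\rmG$-action, every finite-dimensional $\rmG$-representation $\rmV$ becomes isomorphic (as an object of $\USph^{\rmG}$) to the trivial representation $\rmT_{\dim \rmV}$, so that the essential image of $i$ is cofinal in $\USph^{\rmG}$ up to isomorphism. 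For (ii), the functor $j$ is the identity on objects (it only enlarges the simplicially enriched hom-objects by allowing non-equivariant homothety classes of linear injections).

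First I would establish both as Quillen adjunctions at the level of the level-wise projective model structures. Since level-wise projective fibrations and weak-equivalences in $\USpt^{\rmG}$ are tested pointwise on $\USph^{\rmG}$, both $i^*$ and $j^*$ obviously preserve them. By adjointness, $\mathbb{P}$ and $\tilde{\mathbb{P}}$ preserve projective cofibrations and trivial cofibrations. Next, to promote the adjunctions to the stable projective model structures (which are Bousfield localizations of the level-wise projective structures with respect to $\mathbf{S}_{\mathbb{N}}$, $\mathbf{S}$, respectively), it suffices to show that $i^*$ and $j^*$ preserve $\Omega$-spectra. For $i^*$, an $\Omega$-spectrum $\chi$ in $\USpt^{\rmG}$ satisfies the $\Omega$-condition for all homothety classes $\alpha$ of $k$-linear injections $\rmV\to\rmV\oplus\rmW$, in particular for the trivial representations; restricting to ${\mathbb N}$ yields an $\Omega$-spectrum in $\Spt$. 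For $j^*$, the $\Omega$-condition for arbitrary $k$-linear injections is strictly stronger than the condition for $\rmG$-equivariant injections required in ${\widetilde{\USpt}}^{\rmG}$, so $\Omega$-spectra are sent to $\Omega$-spectra. Equivalently, using the identification $\mathbb{P}(F_{\rmT_n}(C))\cong \mathcal{F}_{\rmT_n}(C)$ (both being left adjoint to evaluation at $\rmT_n$), the generating stabilizing maps in $\mathbf{S}_{\mathbb N}$ are sent into $\mathbf{S}$; similarly for $\tilde{\mathbb{P}}$.

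To upgrade to a Quillen equivalence, the key is that the derived unit and counit are stable weak-equivalences. For (i), this reduces to showing that stable homotopy classes of maps are computed by colimits over either $\mathbb{N}$ or $\USph^{\rmG}$, and the cofinality of trivial representations noted above makes these two colimits agree on fibrant objects. For (ii), since $j$ is bijective on objects, $j^*\chi$ and $\chi$ have the same underlying values $\chi(\rmT_{\rmV})$, so the stable homotopy groups computed on either side are the same colimits; what must be checked is that the left Kan extension $\tilde{\mathbb P}$ does not destroy this on cofibrant objects. This is where I expect the main technical obstacle: one must verify on representable cofibrants of the form $\mathcal{F}_{\rmT_{\rmV}}(C)$ (which are cofibrant in ${\widetilde{\USpt}}^{\rmG}$) that the canonical comparison $\X \to j^*\tilde{\mathbb{P}}\X$ becomes a stable weak equivalence after fibrant replacement, via an explicit computation of the coend defining $\tilde{\mathbb{P}}$ followed by the argument that the forgotten symmetries in $\USph^{\rmG}$ only contribute equivalent summands after passing to the stable homotopy category.

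Finally, for (iii), both $i:{\mathbb N}\to\USph^{\rmG}$ and $j:\Sph^{\rmG}\to\USph^{\rmG}$ are strict symmetric monoidal $\C$-enriched functors: $i$ sends $\rmT_n\wedge\rmT_m=\rmT_{n+m}$ to the same trivial representation in $\USph^{\rmG}$, and $j$ is the identity on objects and preserves $\rmT_{\rmV}\wedge\rmT_{\rmW}=\rmT_{\rmV\oplus\rmW}$. Since the smash products on $\Spt$, ${\widetilde{\USpt}}^{\rmG}$, and $\USpt^{\rmG}$ are, respectively, the Day convolutions with respect to the symmetric monoidal structures on ${\mathbb N}$, $\Sph^{\rmG}$, and $\USph^{\rmG}$ (as in Definition \ref{smash.prdcts}), the universal property of Day convolution along a strict symmetric monoidal functor gives a canonical strict symmetric monoidal structure on the left Kan extension. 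Applied to $i$ and $j$, this yields the strict monoidality of $\mathbb{P}$ and $\tilde{\mathbb{P}}$, completing the proof.
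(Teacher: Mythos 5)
Your treatment of the Quillen adjunction step, the $\Omega$-spectrum preservation argument, and part (iii) via Day convolution all track what the paper does. The genuine gap is in the Quillen equivalence step, which you flag as the ``main technical obstacle'' but do not resolve. Your proposed route --- an explicit coend computation of $\tilde{\mathbb{P}}$ followed by an argument that forgotten symmetries in $\USph^{\rmG}$ contribute only equivalent summands stably --- is more roundabout than what is needed, and you never actually carry it out, so the key claim (that the derived unit is a stable weak-equivalence) remains unproven.

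The paper's argument avoids the obstacle you anticipated. Rather than computing a coend, it establishes three preparatory facts: (a) $i^*$ and $j^*$ both \emph{detect and preserve} stable weak-equivalences between fibrant objects (not merely preserve them, which is all you use); (b) the fibrant replacement functor $Q$ commutes strictly with $i^*$ and $j^*$, i.e. $Q\circ i^* = i^*\circ Q$ and $Q\circ j^* = j^*\circ Q$, proven by tracing the small object argument; and (c) the explicit identities $\mathbb{P}(F_{\rmT_n}) = \mathcal{F}_{i(\rmT_n)}$, $i^*(\mathcal{F}_{\rmT_{\rmV}}) = F_{\rmT_n}$ with $n=\dim \rmV$, and $j^*(\mathcal{F}_{j(\rmT_{\rmV})}) = \mathcal{F}_{\rmT_{\rmV}}$. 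Then \cite[Lemma 4.1.7]{HSS} reduces the equivalence to showing the composite $\X \to i^*\mathbb{P}(\X) \to i^*Q\mathbb{P}(\X)$ is a stable weak-equivalence for $\X$ cofibrant; (b) rewrites the second map as the fibrant replacement of $i^*\mathbb{P}(\X)$, so it suffices to show $\X \to i^*\mathbb{P}(\X)$ is a stable weak-equivalence for cofibrant $\X$; and (c) shows this is true on the nose for the generating cofibrations, after which the small object argument extends it. Note also that your appeal to cofinality of the trivial representations inside $\USph^{\rmG}$ is not how the paper proceeds and would require justifying, separately, that $\rmT_{\rmV}$ and $\rmT_{\dim\rmV}$ are isomorphic as objects of $\USph^{\rmG}$ in a way compatible with the enriched structure --- a point that the paper's free-functor identities sidestep entirely.
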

\begin{proof} It should be clear that $i^*$ ($j^*$) preserves fibrations and weak-equivalences in the level-wise
 projective model structures. Therefore, its left adjoint ${\mathbb P}$ (${\tilde {\mathbb P}}$) preserves the cofibrations and trivial
 cofibrations in the level-wise projective model structures. It is also clear that $i^*$ sends $\Omega$-spectra
 in ${\widetilde {\Spt}}({\rmS})$ to $\Omega$-spectra in $\Spt(\rmS)$ and that $j^*$ sends $\Omega$-spectra in ${\widetilde {\Spt}}({\rmS})$ to
 $\Omega$-spectra in ${\widetilde {\Spt}}^{\rmG}({\rmS})$. Therefore, the functors ${\mathbb P}$ and ${\tilde {\mathbb P}}$ preserve stable weak-equivalences
 between cofibrant objects.
 \vskip .2cm
 Next we consider the generating trivial cofibrations for the projective stable model structure on ${\widetilde {\Spt}}^{\rmG}({\rmS})$.
 For this one starts with the generating trivial cofibrations for the level-wise projective model structure defined in 
 ~\eqref{gen.cofibs}. Then one replaces  each of the maps in the set  ${\mathbf S}$ defined in ~\eqref{S.1}) by the corresponding simplicial 
 mapping cylinder and adds these maps to the generating trivial cofibrations for the level-wise projective model structure.
 One may denote the resulting set by $\tilde \rmJ^{\rmG}$. Finally one takes the pushout-products of the maps in $\tilde \rmJ^{\rmG}$
 with  $\delta \Delta[n]_+ \ra \Delta [n]_+$, $n \ge 0$. This will be the set of generating trivial cofibrations for the 
 stable projective model structure on ${\widetilde {\Spt}}^{\rmG}({\rmS})$.
 \vskip .1cm
 By replacing the set ${\mathbf S}$ with $\tilde {\mathbf S}_{{\mathbb N}}$ defined in ~\eqref{S.3} (${\mathbf S}_{{\mathbb N}}$ defined in ~\eqref{S.2}) and the 
 generating trivial cofibrations for the level-wise projective model structure defined in 
 ~\eqref{gen.cofibs} with the corresponding generating trivial cofibrations for the level-wise projective model structure 
 for the category ${\widetilde {\Spt}}({\rmS})$ ($\Spt(\rmS)$, \res), one obtains the generating trivial cofibrations for the category ${\widetilde {\Spt}}({\rmS})$
 ($\Spt(\rmS)$, \res).
 \vskip .2cm
 Next, the adjunction between the free functors and the evaluation functors provides the identification:
 \be \begin{equation}
 \label{free.ident}
 {\mathbb P}(F_{\T^{\wedge n}_{\rmS}}) = {\mathcal F}_{i(\T^{\wedge n}_{\rmS})} \mbox{ and } \tilde {\mathbb P}({\mathcal F}_{\rmT_{\rmV}^{\rmS}}) = {\mathcal F}_{j(\rmT_{\rmV}^{\rmS})}.
 \end{equation} \ee
 (This follows readily from the identifications
 $Eval_{\T^{\wedge n}_{\rmS}}(i^*(\X)) = Eval_{i(\T^{\wedge n}_{\rmS})}(\X)$ and $Eval_{\rmT_{\rmV}^{\rmS}}(j^*(\X)) = Eval_{j(\rmT_{\rmV}^{\rmS})}(\X)$.) Therefore, it follows that ${\mathbb P}$ 
 (${\tilde {\mathbb P}}$) sends the 
 generating trivial cofibrations in the projective stable model structure on $\Spt(\rmS)$ (${\widetilde \Spt}^{\rmG}({\rmS})$) to the generating trivial
 cofibrations in the projective stable model structure on ${\widetilde \Spt}({\rmS})$ ( ${\widetilde \Spt}({\rmS})$, \res). Since the functor ${\mathbb P}$ (${\tilde {\mathbb P}}$) also preserves
 pushouts and filtered colimits, it follows that it preserves trivial cofibrations in the stable projective model structure. Since the cofibrations 
 in the projective stable model structure are the same as in the projective level-wise model structure, it follows
 that ${\mathbb P}$ (${\tilde {\mathbb P}}$) also preserves these, thereby proving that the functors $({\mathbb P}, i^*)$ (${\tilde {\mathbb P}}, j^*)$) 
 define a
 Quillen adjunction of the projective stable model structures on $\Spt(\rmS)$ and ${\widetilde {\Spt}}({\rmS})$ (${\widetilde {\Spt}}^{\rmG}({\rmS})$ and ${\widetilde {\Spt}}({\rmS})$, \res). 
\vskip .1cm
Next observe that the functor $i^*$ ($j^*$) being a right Quillen functor preserves trivial fibrations in the stable model structure, and therefore, 
(by Ken Brown's lemma: see \cite[Lemma 1.1.12]{Hov99}), it preserves all stable weak-equivalences between stably fibrant objects.
 In fact a stable weak-equivalence between stably fibrant objects is a level-wise weak-equivalence and $i^*$ ($j^*$) clearly preserves
  these. Next we already saw that $i^*$ ($j^*$) preserves $\Omega$-spectra and therefore all stably fibrant objects.
  Therefore, suppose $f: \X \ra \Y$ is a map in ${\widetilde {\Spt}}({\rmS})$ between stably fibrant objects, so that $i^*(f): i^*(\X) \ra i^*(\Y)$ is a stable weak-equivalence.
  Since both $i^*(\X)$ and $i^*(\Y)$ are stably fibrant, this is a level-wise weak-equivalence of spectra in $\Spt(\rmS)$,
  i.e., the induced map $i^*(f)(\T^{\wedge n}_{\rmS}): {\it i}^*(\X(\T^{\wedge n}_{\rmS})) \ra {\it i}^*(\Y(\T^{\wedge n}_{\rmS}))$ is a weak-equivalence for every $n$.
  Since the objects in $\Sph_{\rmS}$ are also just finite dimensional $k$-vector spaces (i.e. without any $\rmG$-action),
  it follows that $f$ itself is a level-wise weak-equivalence of spectra and therefore also a stable weak-equivalence in ${\widetilde {\Spt}}({\rmS})$. Stated another way,
  this shows that the functor $i^*$ both detects and preserves stable weak-equivalences between fibrant objects. An entirely similar argument
   proves that $j^*$ both preserves and detects stable weak-equivalences between fibrant objects.
  \vskip .1cm
  Next we make the following observation: 
  \be \begin{equation}
       \label{i*.preserves.free}
 i^*({\mathcal F}_{\rmT_{\rmV}^{\rmS}}) = F_{\T^{\wedge n}_{\rmS}}, \mbox{ where } n= dim(\rmV)  \mbox{ and } {\it j}^*({\mathcal F}_{{\it j}(\rmT_{\rmV}^{\rmS})}) = {\mathcal F}_{\rmT_{\rmV}^{\rmS}}.
      \end{equation} \ee
\vskip .1cm \noindent
One may see the first by evaluating both sides at $\T^{\wedge m}_{\rmS}$, $m \in {\mathbb N}$ and the second by evaluating both sides at $\rmT_{\rmW}^{\rmS} \in \Sph^{\rmG}_{\rmS}$.
\vskip .1cm
The next step is to show the following holds: let $Q$ denote the fibrant replacement functor in the projective
stable model category structures on any one of the model categories ${\widetilde {\Spt}} ^{\rmG}_{\rmS}$, ${\widetilde {\Spt}}({\rmS})$ and $\Spt(\rmS)$. Then the functors $i^*$ and $j^*$ strictly commute with
$Q$ in the sense 
\be \begin{equation}
\label{Q.commutes}
Q \circ i^* = i^* \circ Q \mbox{ and } Q \circ j^* = j^* \circ Q.
\end{equation} \ee
We will only provide a proof for the first equality, since the second equality may be proved in a similar manner.
To see this, one needs to recall how a functorial fibrant replacement is constructed
 making use of the small object argument: see \cite[Theorem 2.1.14]{Hov99}.  We will consider this for an object $\X \in {\widetilde {\Spt}}({\rmS})$. It is defined as the transfinite colimit of a filtered direct system of
 spectra $\X_{\alpha} \in {\widetilde {\Spt}}({\rmS})$, starting with $\X_0 = \X$. In order to obtain $\X_{\alpha+1}$ from $\X_{\alpha}$, we consider 
 all commutative squares of the form
 \[ \xymatrix{{A_{\alpha}} \ar@<1ex>[r] \ar@<1ex>[d] & {\X_{\alpha}} \ar@<1ex>[d]\\
              {B_{\alpha}} \ar@<1ex>[r] & {*} }
 \]
\vskip .1cm \noindent
with $A_{\alpha} \ra B_{\alpha}$ one of the generating trivial cofibrations in the projective stable model structure. 
Then we let $X_{\alpha+1}$ be defined as the corresponding pushout, after having replaced 
$A_{\alpha} \ra B_{\alpha}$ by the sum of all
 such maps as one varies over the generating trivial cofibrations. Since the above pushout and the colimit are taken 
after evaluating a spectrum at each object $\rmT_{\rmV}$, it should be clear that the functor $i^*$ commutes with
such colimits and pushouts. Moreover, ~\eqref{i*.preserves.free} shows that the functor $i^*$ sends the generating 
trivial cofibrations for the stable projective model structure on ${\widetilde {\Spt}}({\rmS})$ to the generating trivial cofibrations 
of the stable projective model structure on $\Spt$ and that every generating trivial cofibration in this model structure on
$\Spt(\rmS)$ may be obtained by applying the functor $i^*$ to a generating trivial cofibration in the above model structure on
${\widetilde {\Spt}}({\rmS})$. 
\vskip .1cm
Finally, we now observe from \cite[Lemma 4.1.7]{HSS}, that it suffices to prove that for any object
$\X \in \Spt(\rmS)$ ($\Y \in {\widetilde {\Spt}}^{\rmG}({\rmS})$), which is cofibrant in the projective stable model structure there, the composite map 
$\X \ra i^*{\mathbb P}(\X) \ra i^* Q({\mathbb P}(\X))$ ($\Y \ra j^*\tilde {\mathbb P}(\Y) \ra j^*Q(\tilde {\mathbb P}(\Y)$) is a stable weak-equivalence. In view of ~\eqref{Q.commutes},
we obtain the identification $i^*Q({\mathbb P}(\X)) = Q (i^*( {\mathbb P}(\X)))$
($j^*Q(\tilde {\mathbb P}(\Y)) = Q(j^*(\tilde {\mathbb P}(\Y)))$, \res).
  Clearly the map $i^*{\mathbb P}(\X) \ra
Q (i^*( {\mathbb P}(\X)))$ ($j^*\tilde{\mathbb P}(\Y) \ra Q(j^*\tilde {\mathbb P}(\Y))$) is a stable weak-equivalence, since $Q (i^*( {\mathbb P}(\X)))$ 
($Q (j^*(\tilde {\mathbb P}(\Y)))$ 
is a stably fibrant replacement of
$i^*( {\mathbb P}(\X))$ ($j^*(\tilde {\mathbb P}(\Y))$, \res).

\vskip .2cm
Next one recalls how a functorial cofibrant replacement is constructed
 making use of the small object argument: see \cite[Theorem 2.1.14]{Hov99}.  We will consider this for an object $\X \in {\widetilde {\Spt}}({\rmS})$. 
 It is defined as the transfinite colimit of a filtered direct system of
 spectra $\X_{\alpha} \in {\widetilde {\Spt}}({\rmS})$, starting with $\X_0 = *$. In order to obtain $\X_{\alpha+1}$ from $\X_{\alpha}$, we consider 
 all commutative squares of the form
 \[ \xymatrix{{A_{\alpha}} \ar@<1ex>[r] \ar@<1ex>[d] & {\X_{\alpha}} \ar@<1ex>[d]\\
              {B_{\alpha}} \ar@<1ex>[r] & {\X } }
 \]
\vskip .1cm \noindent
with $A_{\alpha} \ra B_{\alpha}$ one of the generating  cofibrations in the projective stable model structure. 
Then we let $\X_{\alpha+1}$ be defined as the corresponding pushout, after having replaced 
$A_{\alpha} \ra B_{\alpha}$ by the sum of all
 such maps as one varies over the generating  cofibrations. Since the above pushout and the colimit are taken 
after evaluating a spectrum at each object $\T^{\wedge n}_{\rmS}$, it should be clear that the functor $i^*$ commutes with
such colimits and pushouts. One may also see that the functor ${\mathbb P}$ (being left-adjoint to $i^*$) commutes with such colimits.
Moreover the map $* \ra i^*{\mathbb P}(*)$ is a stable weak-equivalence, in fact an isomorphism as may be seen by taking the vector space $\rmV=\{0\}$ in
~\eqref{i*.preserves.free}. Next observe from
~\eqref{free.ident} and ~\eqref{i*.preserves.free} that the map $\X \ra i^*({\mathbb P}(\X))$ is a stable weak equivalence when $\X$
is the source or target of a generating cofibration. Therefore, the conclusion from the above discussion is that 
both the maps 
\be \begin{equation}
\label{c.chi}
\X \leftarrow c\X \ra i^*({\mathbb P}(c\X))
\end{equation} \ee
are stable weak equivalence, when $c\X$ is a cofibrant replacement of $\X$. 
Since ${\mathbb P}$ is a left Quillen functor of the stable projective model structures, it clearly preserves
stable weak-equivalences between cofibrant objects. Moreover, all the maps in the composition
\be \begin{equation}
 \label{compos.eq}
  i^*({\mathbb P}(c\X)) \ra Q(i^*({\mathbb P}(c\X))) \simeq i^*(Q({\mathbb P}(c\X))) \ra i^*(Q({\mathbb P}(\X)))
\end{equation} \ee
are weak-equivalences. The first map is one since $Q$ is the fibrant replacement functor considered above. The weak-equivalence 
after that comes from the fact that the functor $Q$ commutes with the functor $i^*$ as proven in ~\eqref{Q.commutes}. The fact that 
last map is a weak-equivalence comes from the fact that the functor $i^*$ preserves stable weak-equivalences between stably fibrant objects.
Therefore, the commutative diagram
\[\xymatrix{{c\X} \ar@<1ex>[r]^{\simeq} \ar@<1ex>[d]^{\simeq}& {i^*{\mathbb P}(c\X)} \ar@<1ex>[r]^{\simeq} \ar@<1ex>[d] & {i^*Q({\mathbb P}(c\X))} \ar@<1ex>[d]^{\simeq}\\
             {\X} \ar@<1ex>[r] & {i^*{\mathbb P}(\X)} \ar@<1ex>[r] & {i^*Q({\mathbb P}(\X))}}
\]
shows that composition of the maps in the bottom row is a weak-equivalence for $\X$ stably cofibrant in $\Spt({\rmS})$.
  One may similarly prove that composition of the maps in the diagram 
 \be \begin{equation}
 \label{compos.eq.2}
 \Y \ra j^*\tilde {\mathbb P}(\Y) \ra Q(j^*\tilde {\mathbb P}(\Y)) \simeq j^*Q(\tilde {\mathbb P}(\Y)) \ra j^*Q(\tilde {\mathbb P}(\Y))
 \end{equation} \ee
is a stable weak-equivalence for $\Y \in {\widetilde {\Spt}}({\rmS})$ which is stably cofibrant.
 \vskip .2cm
 These complete the proof of the first two statements. Observe that both the functors ${\mathbb P}$ and
 $\tilde {\mathbb P}$ are left-Kan extensions and therefore, commute with the smash-products of spectra, which are also 
 left-Kan extensions. This completes the proof of the proposition.
 \end{proof}

\vskip .2cm
Given a commutative ring spectrum $\cE^{\rmG} \in \Spt^{\rmG}$, we let $\cE = i^*({\tilde {\mathbb P}}\tilde \rmU(\cE^{\rmG})$, which is 
 a commutative ring spectrum in $\Spt(\rmS)$. For example, the equivariant sphere spectrum $\mbS^{\rmG}$ provides 
  $\mbS = i^*({\tilde {\mathbb P}}\tilde \rmU(\mbS^{\rmG}))$, the usual sphere spectrum.
  Then one readily proves the existence of a Quillen equivalence between the stable
  model categories ${\widetilde {\Spt}}^{\rmG}({\rmS}, \tilde \rmU({\cE^{\rmG}}))$ and ${\widetilde {\Spt}}({\rmS}, {\tilde {\mathbb P}}\tilde \rmU({\cE^{\rmG}}))$
  as well as between the stable model categories
  ${\widetilde {\Spt}}({\rmS}, {\tilde {\mathbb P}}\tilde \rmU({\cE^{\rmG}}))$ and $\Spt(\rmS, {\cE})$, just as in Proposition ~\ref{comp.1}. 

\section{Spanier-Whitehead duality in the motivic and \'etale setting}
We begin with the following result on the Spanier-Whitehead dual in the motivic and \'etale setting.
\begin{theorem} 
\label{dualizable.pos.char}
Let $k$ denote a perfect field of characteristic $p \ge 0$ and let $\rmX$ denote a smooth quasi-projective scheme over $k$. 
Let $\ell$ denote a fixed prime different from $char(k)$ and let $\cE$ denote a commutative motivic ring spectrum
which is {\it $Z_{(\ell)}$-local}. 
Then $\cE \wedge \rmX_+$ is  dualizable in the category $\Spt(\k_{\rm mot}, \cE)$ of module spectra over $\cE$ with the same conclusion holding with no conditions on the
spectrum $\cE$ if $\rmX$ is projective and smooth. In particular, this holds for ring spectra $\cE$ of the form 
$\rmK{\overset L {\underset {\mbS_{\k}} \wedge}}\H(\bZ/\ell^{n})$, where $\rmK$ is a commutative motivic ring spectrum, $\ell$ is a prime different from $p$
and $n \ge 1$. Here $\H(\bZ/\ell^{n})$ denotes the usual
$\rmZ/\ell^{n}$- motivic Eilenberg-Maclane spectrum, and ${\overset L {\underset {\mbS_{\k}} \wedge}}$ is the derived smash product.
\end{theorem}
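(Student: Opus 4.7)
My plan is to reduce the general quasi-projective case to the case of Proposition~\ref{strong.dual.1}(ii), by means of Gabber's refined $\ell'$-alterations together with a retraction argument that exploits the hypothesis that $\cE$ is $Z_{(\ell)}$-local. First I would dispose of the smooth projective case, for which no hypothesis on $\cE$ is required: by Proposition~\ref{strong.dual.1}(i), $\Sigma_{\T}\rmX_+$ is dualizable in $\Spt_{mot}$, so by Lemma~\ref{dual.wrt.algs}(i), applied with the map of ring spectra $\Sigma_{\T}\to \cE$, we obtain that $\cE \wedge \rmX_+$ is dualizable in $\Spt_{mot,\cE}$.

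Next I would handle the general smooth quasi-projective $\rmX$. Choose an open immersion $\rmX \hookrightarrow \bar{\rmX}$ into a projective (but possibly singular) compactification with complement a closed subscheme $\rmD$. Applying Gabber's refined alteration theorem (see \cite{K}, \cite{Ri13}), one obtains a projective alteration $f : \tilde{\bar \rmX} \to \bar{\rmX}$ of some degree $d$ prime to $\ell$, where $\tilde{\bar \rmX}$ is projective and smooth, and where $f^{-1}(\rmD)$ together with the exceptional locus is a strict normal crossings divisor in $\tilde{\bar \rmX}$. Setting $\tilde{\rmX} = \tilde{\bar \rmX} \setminus f^{-1}(\rmD)$, this is a smooth quasi-projective scheme admitting an open immersion into the smooth projective $\tilde{\bar \rmX}$ with strict normal crossings complement. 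Proposition~\ref{strong.dual.1}(ii) then provides that $\Sigma_{\T}\tilde{\rmX}_+$ is dualizable in $\Spt_{mot}$, and applying Lemma~\ref{dual.wrt.algs}(i) once more gives the dualizability of $\cE \wedge \tilde{\rmX}_+$ in $\Spt_{mot,\cE}$.

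The remaining step is to show that $\cE \wedge \rmX_+$ is a retract of $\cE \wedge \tilde{\rmX}_+$ in $\Spt_{mot,\cE}$, after which Proposition~\ref{finite.cells}(ii) completes the proof. For this one uses the fact that for a projective alteration $f$ of degree $d$ between smooth quasi-projective schemes, there is a motivic transfer (constructed, for instance, via Atiyah duality applied to the generically finite map, using the fact that smooth quasi-projective schemes are dualizable) giving a map $\mathrm{tr}(f): \Sigma_{\T}\rmX_+ \to \Sigma_{\T}\tilde{\rmX}_+$ with the property that the composition $f_* \circ \mathrm{tr}(f)$ acts as multiplication by $d$ on $\Sigma_{\T}\rmX_+$ in $\HSpt_{mot}$. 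Smashing with $\cE$ and invoking the hypothesis that $\cE$ is $Z_{(\ell)}$-local (so $d$, being prime to $\ell$, is invertible in the endomorphism ring $[\cE\wedge\rmX_+,\cE\wedge\rmX_+]$), the composition $f_* \circ \mathrm{tr}(f)/d$ becomes the identity on $\cE \wedge \rmX_+$, exhibiting it as the desired retract.

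The principal difficulty, and the step that needs the most care, is the construction and verification of the motivic transfer $\mathrm{tr}(f)$ for the (possibly non-flat, non-finite) projective alteration $f$, together with the identity $f_* \circ \mathrm{tr}(f) = d \cdot \mathrm{id}$. This is where one must invoke the existing literature on Gabber's alterations in motivic stable homotopy (for example, the results of Kelly \cite{K} and R\"ondigs \cite{Ri13}); alternatively one may stratify $f$ into a composition of blow-ups along smooth centers and a generically \'etale finite map, handle the blow-ups via the projective bundle formula (which gives a direct splitting), and then treat the finite part via the classical trace for finite separable maps. For the last assertion of the theorem, one specializes to $\cE = \rmK \Lotimes_{\Sigma_{\T}} \rmH(\bZ/\ell^{\nu})$, which is manifestly $Z_{(\ell)}$-local (in fact $\ell$-complete) by Lemma~\ref{dual.wrt.algs}(ii), so the hypotheses are satisfied.
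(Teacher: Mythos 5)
Your treatment of the projective smooth case is correct and matches the paper: Proposition~\ref{strong.dual.1}(i) gives dualizability of $\Sigma_{\T}\rmX_+$, and Lemma~\ref{dual.wrt.algs}(i) transfers this to $\Spt_{mot,\cE}$.

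For the general quasi-projective case, however, there is a genuine gap in the retraction step, and the gap is precisely the one that makes the paper's proof take a different shape. You want to exhibit $\cE\wedge\rmX_+$ as a retract of $\cE\wedge\tilde\rmX_+$, where $\tilde\rmX = f^{-1}(\rmX)$ for Gabber's refined alteration $f:\tilde{\bar\rmX}\to\bar\rmX$, by producing a motivic transfer $\mathrm{tr}(f):\Sigma_{\T}\rmX_+\to\Sigma_{\T}\tilde\rmX_+$ with $f_*\circ\mathrm{tr}(f)=d\cdot\mathrm{id}$. Two things go wrong. First, the construction you invoke (Atiyah duality for the generically finite map) presupposes that $\Sigma_{\T}\rmX_+$ is already dualizable, which is precisely what is being proved; the argument is circular at that point. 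Second, and more fundamentally, an alteration is only generically finite: $f$ is a proper, surjective map of degree $d$ that is a finite flat $fps\ell'$-cover only over a dense open $\rmU\subseteq\rmX$, and may contract or blow up subvarieties over $\rmX\setminus\rmU$. For a non-finite, non-flat $f$ there is no identity of the form $f_*\circ\mathrm{tr}(f)=d\cdot\mathrm{id}$ at the level of spectra; that formula is special to finite flat (or at least finite surjective) covers. Your fallback of factoring $f$ into blow-ups along smooth centers plus a generically \'etale finite map is not available in general for alterations, and even when such a factorization exists the blow-up centers need not lie over $\rmX$, so no retraction of $\cE\wedge\rmX_+$ results.

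The paper circumvents exactly this difficulty by doing an ascending induction on $\dim\rmX$. After obtaining dualizability of $\cE\wedge\rmX'_+$ (for $\rmX'=f^{-1}(\rmX)$, a strict normal crossings complement in the smooth projective $\rmY'$, via Proposition~\ref{strong.dual.1}(ii) and Proposition~\ref{two.out.of.three.dual}), it shrinks to the open $\rmU\subseteq\rmX$ where $f$ restricts to an honest $fps\ell'$-cover $g:\rmV\to\rmU$ and establishes the retraction only there, where the trace argument genuinely applies. The pieces $\cE\wedge\rmX'/\rmV$ and $\cE\wedge\rmX/\rmU$ are then shown to lie in $\SH_{d-1}(k,\cE)$, so the inductive hypothesis together with Proposition~\ref{two.out.of.three.dual}, applied to the two cofiber sequences $\cE\wedge\rmV_+\to\cE\wedge\rmX'_+\to\cE\wedge\rmX'/\rmV$ and $\cE\wedge\rmU_+\to\cE\wedge\rmX_+\to\cE\wedge\rmX/\rmU$, propagates dualizability from $\rmX'$ to $\rmV$, then (via the retraction) to $\rmU$, and finally to $\rmX$. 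Your proposal is missing both this dimension induction and the restriction of the trace argument to the finite flat locus; without them the retraction step cannot be justified.
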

\begin{proof}
 The proof makes strong use of Gabber's refined alterations.  Though the arguments below are now 
rather well-known (see, for example, \cite{K} or \cite[2.5]{HKO}), 
it is necessary for us to sketch the relevant arguments in some detail, {\it so as to show that
 they indeed carry through under \'etale realization and change of base fields}.
 \vskip .2cm
 We will give two somewhat different proofs of this result, one of which holds only when $\cE$ admits weak traces 
in the sense of \cite{K} and the other holds more generally making use of \cite{Ri13}. Next one may observe that to prove the spectrum  $\cE\wedge \rmX_+$ is dualizable in $\Spt(\k_{\rm mot}, \cE)$, it suffices to prove that the  natural maps
\be \begin{equation}
\label{st.dual.alterations}
    \eta _{\cE}^{\rmX}: \rmP {\overset L {\wedge  _{\cE}} }\Hom _{\cE}(\cE \wedge \rmX_+, \cE) \ra \Hom _{\cE}(\cE \wedge \rmX_+, P), \quad \cE \wedge \rmX_+ \ra \rmD _{\cE}(\rmD _{\cE}(\cE \wedge \rmX_+))
    \end{equation} \ee
\vskip .2cm \noindent
are weak-equivalences for every $\cE$-module spectrum $\rmP$. 
\vskip .2cm

In case $\rmX$ is projective and smooth, it is well-known
 (see the remarks following Appendix A: Definition ~\ref{V.collapse}) that the Thom-space of the {\it virtual normal} bundle (defined as in Appendix A: Definition ~\ref{V.collapse})  over $\rmX$ 
de-suspended a finite number of times is a (Spanier-Whitehead) dual  of ${\Sigma^{\infty}_{\T}}\rmX_+$. Therefore, the above Thom-space
de-suspended a finite number of times and smashed with $\cE$ will be a (Spanier-Whitehead) dual of $\cE \wedge \rmX_+$ in the category of $\cE$-module spectra.
\vskip .2cm
Recall $\SH(\k, {\cE})$ denote the motivic stable homotopy category of $\cE$-module spectra, i.e., the homotopy category associated to $\Spt(\k_{\rm mot}, \cE)$.
Let $\SH_d(\k, {\cE})$ denote the localizing subcategory of $\SH(\k, \cE)$ which is generated by the shifted $\cE$-suspension spectra of smooth connected schemes of 
dimension $\le d$.
In general one proceeds by ascending induction on the dimension of $\rmX$ to prove that the maps in ~\eqref{st.dual.alterations} are weak-equivalences, the case of dimension $0$ 
reducing to the case $\rmX$ is projective and smooth. When $\rmX$ is quasi-projective of dimension $d$, one may assume $j:\rmX \ra \rmY$ is an open immersion 
in a projective scheme $\rmY$ and let $\rmf:\rmY' \ra \rmY$ denote the map given by Gabber's refined alteration 
so that $\rmX'= \rmf^{-1}(\rmX)$ is the
complement of a divisor with strict normal crossings. \index{refined alteration} Let $\rmU \subseteq \rmX$ denote the open subscheme over which $\rmf$ restricts to an $fps\ell'$-cover 
${\rm g}: \rmV = \rmf^{-1}(\rmU) \ra \rmU$. 
\vskip .2cm
Since $\rmY'$ is smooth and projective, $\cE \wedge \rmY'_+$ is  dualizable in the category of $\cE$-module 
spectra. One may next observe that if 
\be \begin{multline}
  \label{two.out.of.three.dual}
     \begin{split}
\rmA' \ra \rmA \ra \rmA'' \ra \rmA[1] \mbox{ is a stable cofiber sequence in } \Spt(\k_{\rm mot}, \cE) \mbox{ and if two of the three terms } \\
\rmA', \, \rmA \, and \,\rmA'' \mbox{ are dualizable in } \Spt(\k_{\rm mot}, \cE), \mbox{ so is the third term }.
\end{split}
\end{multline} \ee
Therefore, by homotopy-purity, induction on the
number of irreducible components of $\rmY' - \rmX'$  one observes that $\cE \wedge \rmX'_+ $ is also  dualizable in the
same category.  Now one considers the stable cofiber sequences:
\be \begin{equation}
     \label{cofiber.seqs}
\cE \wedge \rmV_+ \ra \cE \wedge \rmX'_+ \ra \cE \wedge \rmX'/\rmV, \quad \cE \wedge \rmU_+ \ra \cE \wedge \rmX_+ \ra \cE \wedge \rmX/\rmU.
\end{equation} \ee
By an argument as in \cite[Lemma 66]{RO} (see also \cite[2.5]{HKO}), both $\cE \wedge \rmX'/\rmV$ and $\cE \wedge \rmX/\rmU $ belong to $\SH_{d-1}(\k, {\cE})$. 
We will provide some details on this argument, for the convenience of the reader. In case the complement $\rmZ= \rmX'-\rmV$ is also smooth, the homotopy purity Theorem
\cite[Theorem 3.2.33]{MV} shows that $\rmX'/\rmV$ is weakly equivalent to the Thom-space of the normal bundle $\rmN$ associated to the closed immersion $\rmZ \ra \rmX'$. 
Ascending induction on the number of open sets in a Zariski open covering over which the normal bundle $\rmN$ trivializes reduces it to the case
when $\rmN$ is trivial. In this case the conclusion is clear. In general, since the base field is assumed to be perfect, one can stratify  $\rmZ$ by a finite
number of locally closed subschemes that are smooth. This gives rise to a sequence of motivic spaces filtering $\rmX'/\rmV$, so that the homotopy cofiber
of two successive terms will be of the form considered earlier. An entirely similar argument applies to $\rmX/\rmU$.
\vskip .2cm
 Therefore, by the induction hypotheses,
both the maps $\eta _{\cE}^{\rmX'/\rmV}$ and $\eta _{\cE}^{\rmX/\rmU}$ are weak-equivalences, where
$\eta _{\cE}^{\rmX'/\rmV}$ ($\eta _{\cE}^{\rmX/\rmU}$) denotes the map corresponding to $\eta_{\cE}^{\rmX}$ in ~\eqref{st.dual.alterations} when $\rmX$ there is
replaced by $\rmX'/\rmV$ ($\rmX/\rmU$, \res).
It follows therefore, by  ~\eqref{two.out.of.three.dual}, that the map
$\eta_{\rmV}^{\cE}$ is also a weak-equivalence. Now we make the key observation proven below that 
$\cE \wedge \rmU_+$ is a retract of $\cE \wedge \rmV_+ $ at least when $\rmU$ is a sufficiently small Zariski open subscheme and that,
therefore, the map $\eta_{\rmU}^{\cE}$ is also a weak-equivalence. Now the second stable cofiber sequence in ~\eqref{cofiber.seqs} together with another application of
 ~\eqref{two.out.of.three.dual} proves that the map $\eta_{\rmX}^{\cE}$ is also a weak-equivalence. One may prove the second map
in ~\eqref{st.dual.alterations} is a weak-equivalence by a similar argument.
\vskip .2cm
It follows straight from the definition that the spectra $\rmK{\overset L {\underset {\mbS_{\k}} \wedge}}\H(\bZ/\ell^{n})$ and $\H(\bZ/\ell^{n})$ are 
$\rmZ_{(\ell)}$-local. (Observe also that  $\H(\bZ/\ell^{n})$ admits weak-traces 
and that $\rmK{\overset L {\underset {\mbS_{\k}} \wedge}}\H(\bZ/\ell^{n})$ admits weak traces when $\rmK$ admits weak-traces.)
\end{proof}
\begin{lemma}
\begin{enumerate}[\rm(i)]
 \item Let $\rmV$, $\rmU$ denote two smooth schemes over $k$ and let ${\rm g}: \rmV \ra \rmU$ denote an $fps\ell'$-cover, where $\ell$ is a fixed prime different from $char(k)$. If $\cE$ is
a commutative ring spectrum which is $\rmZ_{(\ell)}$-local and which admits weak traces as in \cite{K}, then the map $id _{\cE} \wedge {\rm g}_+  : \cE \wedge \rmV_+  \ra \cE \wedge \rmU_+ $ has a section.
\item More generally the same conclusion holds if $\rmU$ is a sufficiently small Zariski open subscheme and for any commutative motivic ring spectrum $\cE$
that is $\rmZ_{(\ell)}$-local.
\end{enumerate}
\end{lemma}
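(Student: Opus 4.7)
The plan is to construct the section as a rescaling of a weak trace associated to $g$. Let $d$ denote the (locally constant) degree of the finite flat surjective cover $g\colon V\to U$; by hypothesis $\gcd(d,\ell) = 1$, so $d$ is a unit in $\rmZ_{(\ell)}$. I will produce a stable $\cE$-module map $\tau_g\colon \cE\wedge U_+ \to \cE\wedge V_+$ satisfying the projection-formula identity
\[
(\mathrm{id}_\cE \wedge g_+)\circ \tau_g \;=\; d \cdot \mathrm{id}_{\cE\wedge U_+}.
\]
Granting such a $\tau_g$, the $\rmZ_{(\ell)}$-locality of $\cE$ forces multiplication by $d$ on $\cE\wedge U_+$ to be a weak equivalence, and the composite $s = \tau_g \circ (d\cdot\mathrm{id})^{-1}$ is the desired section of $\mathrm{id}_\cE\wedge g_+$.

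For part (i), both $\tau_g$ and the projection-formula identity are precisely the data supplied by the hypothesis that $\cE$ admits weak traces in Kelly's sense \cite{K}, so the section is immediate once multiplication by $d$ is inverted. For part (ii), when $\cE$ is an arbitrary $\rmZ_{(\ell)}$-local commutative motivic ring spectrum, the plan is instead to construct $\tau_g$ intrinsically from the geometry of $g$ after shrinking $U$. Since $fps\ell'$-covers are generically \'etale and $k$ is perfect, over a sufficiently small affine Zariski open $U$ one can build a Voevodsky-style finite-correspondence transfer: the transpose of the graph of $g$ defines a finite correspondence from $U$ to $V$ whose action on motivic spectra serves as $\tau_g$, and whose composition with $g_+$ is computed at the level of correspondences to equal $d\cdot\mathrm{id}_U$.

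The main obstacle is ensuring that such a universal geometric transfer induces a map of $\cE$-module spectra for every $\rmZ_{(\ell)}$-local $\cE$, not merely on motives with transfers. The resolution is that, after $\rmZ_{(\ell)}$-localization, such a transfer already lives at the level of the motivic sphere spectrum $\Sigma_{\T}$ via Gabber's refined alterations and Riou's construction (cf.\ \cite{Ri13}, \cite{K}); this is precisely the reason why the hypothesis of $\rmZ_{(\ell)}$-locality (rather than merely existence of weak traces on $\cE$) suffices. Hence the sphere-level transfer induces the desired $\tau_g$ on every module spectrum $\cE\wedge U_+$ by naturality, and the projection-formula identity propagates from the sphere spectrum to $\cE$. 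Inverting $d$ via $\rmZ_{(\ell)}$-locality then yields the section precisely as in (i).
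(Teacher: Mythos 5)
Your proposal is correct and takes essentially the same route as the paper.  For part (i) you construct the wrong-way map $\tau_g\colon \cE\wedge U_+\to\cE\wedge V_+$ directly and use the identity $(\mathrm{id}_{\cE}\wedge g_+)\circ\tau_g = d\cdot\mathrm{id}$ together with $\rmZ_{(\ell)}$-locality to invert $d$; the paper instead works contravariantly, showing the natural transformation $g^*\colon[\cE\wedge U_+,\,-\,]\to[\cE\wedge V_+,\,-\,]$ on corepresented functors splits, via the adjunction identification $[\cE\wedge U_+,F]\cong[\cE,\rmR a_* a^* F]$ and the splitting $d^{-1}\mathrm{Tr}(g)$ of the unit $\rmR a_* a^* F\to\rmR a_*\rmR g_* g^* a^* F$ coming from Kelly's weak-trace structure (inherited by any $\cE$-module $F$), and then invokes Yoneda to produce the stable section.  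These are the same argument up to Yoneda duality, and they use the same input (Kelly's traces plus invertibility of $d$ in $\rmZ_{(\ell)}$).  For part (ii) the paper simply defers entirely to Riou's appendix \cite{Ri13}; your sketch of the underlying mechanism — that the transfer must be lifted to the $\rmZ_{(\ell)}$-localized sphere (not merely to $\H\mathbb{Z}$-modules, since finite correspondences do not act on $\Sigma_{\T}$ directly), which is where Gabber's refined alterations and Riou's construction come in after shrinking $U$ — correctly identifies the real difficulty and is consistent with what that appendix establishes, though a fully rigorous rendering of those steps would require reproducing a nontrivial portion of \cite{Ri13}, which the paper does not attempt either.
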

\begin{proof}  $(i)$ The first observation is that it suffices to show the induced natural transformation:
\[ [\cE \wedge \rmU_+ ,  \quad ] {\overset {{\rm g}^*} \ra} [\cE \wedge \rmV_+ , \quad ] \]
has a splitting, where $[ \rmK, \rmL]$ denotes homotopy classes of maps in $\Spt(\k_{\rm mot}, \cE)$, with $\rmK$ cofibrant and $\rmL$ fibrant. Denoting the structure map $\rmU \ra Spec \, \k$ by $\rma$, 
one may identify $[\cE \wedge \rmU_+ , \rmF]$ ($[\cE \wedge \rmV_+ , \rmF]$) with $[\cE, \rmR\rma_*\rma^*(\rmF)]$ ($[\cE, \rmR\rma_* \rmR{\rm g}_*{\rm g}^*\rma^*(F)]$, \res) for any fibrant $\cE$-module spectrum $\rmF$.
Since $\cE$ has a structure of traces, so does $\rmF$.
Therefore, the natural map
$\rmR\rma_*\rma^*(\rmF) \ra \rmR\rma_*R{\rm g}_*{\rm g}^*\rma^*(\rmF)$ has a splitting provided by the map $\rmd^{-1}{\rm {Tr(g)}}$, where $\rmd$ is the degree of the map ${\rm g}$ and
${\rm Tr(g)}$ denotes the trace associated to ${\rm g}$.
\vskip .2cm
(ii) The proof of (ii) is essentially worked out in \cite{Ri13}. 
\end{proof}
\vskip .2cm
We proceed to show that the notion of dualizability is preserved by various standard operations, like 
change of base fields, or change of sites. Recall that we have already assumed the base scheme is a perfect field $\k$ satisfying the hypothesis 
~\eqref{etale.finiteness.hyps}.  Let $\bar k$ denote an algebraic closure of $k$. Then we obtain the following functors (which in fact denote the corresponding left-derived functors):
\be \begin{equation}
     \label{maps.topoi.2}
\epsilon^*:\Spt(\k_{\rm mot}) \ra \Spt(\k_{et}),  \bar \epsilon^*:\Spt({\bar \k_{\rm mot}}) \ra \Spt({\bar \k_{et}}) \mbox{ and } \eta^*: \Spt(\k_{et}) \ra \Spt({\bar \k_{et}}).
\end{equation}\ee
\vskip .2cm
Since \'etale cohomology is well-behaved only with torsion coefficients prime to the characteristic, one will need to also consider the 
functors $\theta: \Spt(\bar \k_{et}) \ra \Spt(\bar \k_{et})$  sending commutative ring spectra $\cE$ to 
$\cE{\overset L {\underset {\mbS_{\k_{et}}} \wedge}}\H(\bZ/\ell^n)$
where $\H(\bZ/\ell^n)$ denotes the  mod-$\ell^n$ Eilenberg-Maclane spectrum in $\Spt(\k_{et}, \epsilon^*(\T))$, with  $\ell$ a fixed prime different from $char(k)$.
Again, if $\ell$ is a fixed prime different from $char(k)$, and $\cE$ is a commutative ring spectrum in $\Spt(\k_{et}, \cE)$, 
we will also consider the functor sending
spectra $\rmM \in \Spt(\k_{et}, \cE)$ to $\rmM \wedge  _{\cE} \cE(\ell^{n})$: we will denote this functor by $\phi _{\cE}$.
We will adopt  the convention that 
the above functors in 
fact denote their corresponding 
left derived functors.
\begin{proposition} \index{\'etale realization}
 \label{compat.dualizability}
Let $\ell$ denote a fixed prime different from $char(k)$,  where $k$ is assumed to be a perfect field satisfying the hypothesis 
~\eqref{etale.finiteness.hyps}. Let $n$ denote a positive integer.
 If $\cE$ is a commutative motivic ring spectrum  so that it is $\ell$-primary torsion as in Definition ~\ref{Zl.local}, then the functors
$\epsilon^*$,  $ \bar \epsilon^*$, $\eta^*$ send the dualizable objects of the form $\cE \wedge \rmX_+$ appearing in 
Theorem ~\ref{dualizable.pos.char} to dualizable objects. 
\vskip .1cm
The same conclusion holds for the functors $\theta$ and $\phi _{\cE}$ if $\cE$ is a motivic ring spectrum that is $\ell$-complete. 
\end{proposition}
\begin{proof} One may make use of the fact that the base field is perfect to see that 
 base-change to the algebraic closure of the base field sends
projective smooth schemes to projective smooth schemes and preserves strict normal crossings divisors. 
\vskip .2cm
Observe that the functor $\epsilon^*$ sends motivic spectra which are $\ell$-primary torsion for 
a fixed prime different from $char(k)$ to \'etale spectra which are $\ell$-primary torsion
 and preserves all split maps. It also sends
(motivic) spectra with traces to spectra with traces. Therefore,
the same argument making use of the stable cofiber sequences in ~\eqref{cofiber.seqs} carries over to prove that 
 $\epsilon^*$ and $\bar \epsilon^*$ send dualizable objects in Theorem ~\ref{dualizable.pos.char} to dualizable objects, when the
 spectrum $\cE$ is $\ell$-primary torsion. 
One may prove similarly that the functor $\eta^*$ sends dualizable objects appearing in Theorem ~\ref{dualizable.pos.char} 
to dualizable objects. The conclusion that
the functors $\theta$ and $\phi _{\cE}$ send dualizable objects to dualizable objects should be straight-forward.
 This completes the proof of the Proposition.
\end{proof}
\section{Construction of the transfer}
In this section, we proceed 
to obtain transfer maps for torsors for linear algebraic groups, i.e., when $\rmp : \rmE \ra \rmB$ is a smooth map of 
smooth quasi-projective schemes that is 
a $\rmG$-torsor for a linear algebraic group
$\rmG$. We adopt the framework discussed
in Theorem ~\ref{thm.transf.const}.
\vskip .1cm
 Next, recall the definition of weakly monoidal functors  from Definition ~\ref{monoidal.functs}. Let $\Spt'$ and $\Spt$ denote 
two symmetric monoidal stable model categories.
We say a weakly monoidal functor $\rmF: \Spt' \ra \Spt$ is {\it a monoidal functor} if the  map $\mu: \rmF(\X') \otimes \rmF(\Y') \ra \rmF(\X' \otimes \Y')$ in
~\eqref{monoidal.funct} is a weak-equivalence for all objects $\rmX'$ and $\rmY'$ in $\Spt'$ and if the given map $\epsilon: \rmA \ra \rmF(\rmA')$ is a weak-equivalence, where $\rmA'$ ($\rmA$) denotes the unit of the 
category $\Spt'$ ($\Spt$, \res).
\begin{proposition} 
\label{DP.compat}
{\rm (See \cite[2.2 Theorem]{DP} and \cite[2.4 Corollary]{DP}.)}
 Assume that the functor $\rmF$ is monoidal, induces a functor of the corresponding homotopy categories, that the object 
 $\rmX' \in \Spt'$ is dualizable, and $\rmA'$ is the unit of $\Spt'$. Then $\rmF(\rmX') \in \Spt$ is dualizable and 
 $\rmF(\rmD(\rmX')) \simeq \Hom(\rmF(\rmX'), \rmF(\rmA'))$, where $\Hom$ again denotes the derived (internal) $\Hom$ in $\Spt$.
\end{proposition}
\vskip .1cm
At this point we make implicit use of the chain of equivalences of stable model category structures on ${\widetilde {\Spt}}^{\rmG}(\k_{\rm mot})$, 
${\widetilde {\Spt}}(\k_{\rm mot})$ and $\Spt(\k_{\rm mot})$ proven in Proposition ~\ref{comp.1} which are in fact given by monoidal functors.
Therefore,  Proposition ~\ref{DP.compat}) shows that the theory of Spanier-Whitehead duality 
 currently known in $\Spt(\k_{\rm mot})$ carries over to ${\widetilde {\Spt}}^{\rmG}(\k_{\rm mot})$. 
 \vskip .2cm
\subsection{Construction of the transfer in a general framework}
\label{gen.transfer}
 Assume that $\Spt$ denotes a symmetric monoidal
stable model category where the monoidal structure is denoted $\wedge$ and where the unit of the monoidal structure
 is denoted $\mbS$. We will further assume that the object $\X$ in $\Spt$ comes equipped with a diagonal map $\Delta: \X \ra \X \wedge \X$
and a co-unit map $\kappa: \X \ra \mbS$ so that $\Delta$ provides $\X$ with the structure of a co-algebra: see \cite[section 5]{DP}.
\begin{definition}
 \label{tr.general}
(i) Now one may define the {\it trace} associated to any self-map $\rmf: \X \ra \X$ of an object that is
 dualizable as follows. Recall that we have denoted the {\it evaluation} map $\rmD\X \wedge \X \ra \mbS$ by $e$.
The dual of this map is the {\it co-evaluation} map $c: \mbS \ra \X \wedge \rmD\X$.
Now the {\it trace of $\rmf$} (denoted $\tau_{\rmX}(\rmf)$ or often just $\tau(\rmf)$) is the composition (in $\SH$) 
\be \begin{equation}
     \label{gen.trace}
    \mbS {\overset c \ra} \X \wedge \rmD\X {\overset {\tau} \ra} \rmD\X \wedge \X  {\overset {id \wedge \rmf} \ra} \rmD\X \wedge \X {\overset e \ra} \mbS.  
\end{equation} \ee
\vskip .2cm \noindent
where $\tau$ is the map interchanging the two factors. \index{trace}
\vskip .2cm
(ii) Then, assuming $\X$ comes equipped with a diagonal map $\Delta: \X \ra \X \wedge \X$ so that $\X$ has the structure of a co-algebra,  we define the {\it transfer} as the composition in $\SH$ \index{transfer}:
\be \begin{equation}
     \label{gen.transfer}
\tr(\rmf):\mbS {\overset c \ra} \X \wedge \rmD\X {\overset {\tau} \ra} \rmD\X \wedge \X {\overset {id \wedge \Delta} \ra }
 \rmD\X \wedge \X \wedge \X {\overset {id \wedge \rmf \wedge \rmf } \ra} \rmD\X \wedge \X  \wedge \X {\overset  {e \wedge id} \ra} \mbS \wedge \X = \X
\end{equation} \ee
\vskip .2cm 
(iii) If $\Y \eps \Spt$ is another object, we will also consider the following variant $\tr(\rmf_{\Y}) = \Y \wedge \tr(\rmf): \Y \wedge \mbS \ra \Y \wedge \X$.
\end{definition}
The composition $\rmD\X \wedge \X {\overset {id \wedge \Delta} \ra }
 \rmD\X \wedge \X \wedge \X {\overset {id \wedge \rmf \wedge \rmf } \ra} \rmD\X \wedge \X  \wedge \X $ will often be denoted $id \wedge \Delta(\rmf)$.
\vskip .2cm
Assume in addition to the above situation that $\rmA$ denote a commutative ring object in the symmetric monoidal model category $\Spt$. Let $\Spt_{\rmA}$ denote the 
subcategory of $\Spt$ consisting of objects $\rmM$ provided with an associative and commutative pairing $\rmA \wedge \rmM  \ra \rmM$: see, for example, \cite{SSch}. The category $\Spt_{\rmA}$ 
will be provided with the monoidal structure defined by $\rmM \wedge_{\rmA} \rmN $ defined as the co-equalizer: 
$\rmM \wedge \rmA \wedge \rmN \stackrel{\ra}{\ra} \rmM \wedge \rmN$, where the two arrows denote the multiplication by $\rmA$ on the right on $\rmM$ and on the 
left on $\rmN$.

We will denote this category by $\Spt_{\rmA}$. This will be provided with the model structure defined in \cite[Theorem 4.1]{SSch} so that it is also a stable symmetric monoidal model category.
\begin{proposition}
 \label{D.A.wedgeX}
 Assume the above situation. 
 \begin{enumerate}[\rm(i)]
 \item Then the functor $\Spt \ra \Spt_{\rmA}$, given by $\rmX \mapsto \rmA \wedge \rmX$ is a monoidal functor.
 \item Let $\rmX \in \Spt$ and let $\rmf: \rmX \ra \rmX$ denote any map in $\Spt$. Then $\rmA \wedge \tau_{\rmX}(\rmf) = \tau_{\rmA \wedge \rmX}(id \wedge \rmf)$.
 \end{enumerate}
 \end{proposition}
 \begin{proof} (i) is clear. To see (ii), first observe that $\rmA \wedge {\rmD}(\rmX) = \rmA \wedge \RHom(\rmX, \mbS) \simeq \RHom_{\rmA}(\rmA \wedge \rmX, \rmA) = {\rmD}_{\rmA}(\rmA \wedge \rmX)$,
 where $\RHom$ ($\RHom_{\rmA}$) denotes the derived internal $Hom$ in $\Spt$ ($\Spt_{\rmA}$, \res). Now the definition of the trace $\tau_{\rmA \wedge \rmX}(id \wedge \rmf)$
 shows that it identifies with $\rmA \wedge \tau_{\rmX}(\rmf)$. This completes the proof of (ii).
 \end{proof}
\vskip .2cm

\subsection{\bf The $\group$-equivariant pre-transfer}
\label{pretransfer} \index{pre-transfer} \index{trace} \index{equivariant pre-transfer}
Let $\rmX$ denote a smooth quasi-projective scheme, or more generally an unpointed simplicial presheaf defined on $\Sm_{\k}$, subject to the requirement that  
${\Sigma^{\infty}_{\T}}\rmX_+ \in \Spt(\k_{\rm mot})$ be dualizable. Corresponding results will hold if $ \cE \wedge \rmX_+$ is dualizable in $\Spt(\k_{\rm mot}, \cE)$
 where $\cE^{\rmG} \in \Spt^{\rmG}(\k_{\rm mot})$ is a commutative ring spectrum, with $\cE = i^*(\tilde {\mathbb P}\tilde \rmU(\cE^{\rmG})) \in \Spt(\k_{\rm mot})$ denoting the corresponding non-equivariant ring spectrum.
 Then the equivariant sphere spectrum $\mbS^{\rmG}$ will be replaced by $\cE^{\rmG}$ everywhere in the construction discussed below.
\vskip .1cm	
We will further assume $\rmX$ is provided with an action by the linear algebraic group $\group$.
Associated to any $\group$-equivariant self-map $\rmf: \rmX \ra \rmX$, over the base field $\k$,  we
will presently define a 
pre-transfer map following roughly the definition given in \cite{DP}. The main improvement we
need is to make all the maps that enter into the definition of the pre-transfer ${\group}$-equivariant.  We will define
the ${\group}$-equivariant pre-transfer as the composition of a sequence of maps in ${\widetilde {\Spt}}^{\group}(\k_{\rm mot})$ which are 
all ${\group}$-equivariant. Throughout the following definition we will often abbreviate $\mbS^{\rmG} \wedge\rmX_+$ to just $\rmX_+$ and
taking the dual will mean as in ~\eqref{Dual}.
\vskip .2cm
\begin{definition}
   \label{coeval.pretr.trace}
(i) Accordingly we proceed to first define a {\it ${\group}$-equivariant co-evaluation map}, where the source is the ${\group}$-sphere spectrum $\mbS^{\group}$. We start with the evaluation map
$e: \rmD(\rmX_+) \wedge \rmX_+ \ra \mbS^{\group}$. On taking its dual in ${\widetilde {\Spt}}^{\rmG}(\k_{\rm mot})$, we obtain the map
\be \begin{equation}
\label{G.equiv.coeval}
c:\mbS^{\group} \simeq \rmD(\mbS^{\group}) \ra \rmD(\rmD(\rmX_+) \wedge \rmX_+) {\overset {\simeq} \leftarrow} \rmD(\rmX_+) \wedge \rmD \rmD(\rmX_+){\overset {\simeq} \leftarrow} \rmD(\rmX_+) \wedge \rmX_+  {\overset {\tau} \rightarrow} \rmX_+ \wedge \rmD( \rmX_+).
\end{equation} \ee
The above composition will be the {\it co-evaluation} map $c$.
 Observe that all the maps above are ${\group}$-equivariant and the maps going in the wrong-direction are in fact weak-equivalences.
 \vskip .1cm 
(ii)  Next we consider the map:
\be \begin{equation}
\label{pr.tr.2}
\rmX_+ \wedge \rmD(\rmX_+) {\overset \tau \ra} \rmD(\rmX_+) \wedge \rmX_+ {\overset {id \wedge \Delta} \ra}  \rmD(\rmX_+) \wedge \rmX_+ \wedge \rmX_+
{\overset {id \wedge \rmf \wedge \rmf } \ra } \rmD(\rmX_+)  \wedge \rmX_+ \wedge \rmX_+ {\overset {e \wedge id } \ra } 
 \mbS^{\group} \wedge \rmX_+ . 
\end{equation} \ee
Observe that  the above
diagram is in fact a diagram in ${\widetilde {\Spt}}^{\rmG}(\k_{\rm mot})$ where all the spectra and the maps are ${\group}$-equivariant. 
\footnote{One can put in a slightly more general form of the diagonal map $\Delta$, which will in fact be important for establishing the 
localization or Mayer-Vietoris properties of the pre-transfer. This is discussed in   \cite[Definition 2.2]{JP23}.)}
Now we may compose the co-evaluation map in ~\eqref{G.equiv.coeval} with the map in ~\eqref{pr.tr.2} to define 
 {\it the ${\group}$-equivariant pre-transfer}, denoted $\tr'^{\rmG}(\rmf)$.  Therefore, this will be  the following composition:
\be \begin{equation}
    \label{G.equiv.pretransfer}
\tr'^{\rmG}(\rmf_+):\mbS^{\group} \simeq \rmD(\mbS^{\group}) \ra \rmD( \rmD (\rmX_+) \wedge \rmX_+) {\overset {\simeq} \leftarrow} \rmD (\rmX_+) \wedge \rmD \rmD (\rmX_+){\overset {\simeq} \leftarrow} \rmD (\rmX_+) \wedge \rmX_+  {\overset {\tau} \rightarrow} \rmX_+ \wedge \rmD (\rmX_+) \ra  \mbS^{\group} \wedge \rmX_+.
\end{equation} \ee
\vskip .1cm
(iii) Given $\rmY$,  which is another smooth quasi-projective scheme, or more generally an unpointed simplicial presheaf defined on $\Sm_{\rm S}$, provided with an action by $\rmG$,
we define $\tr'^{\rmG}(\rmf_{\rmY+}): \rmY \times \mbS^{\rmG} \ra \rmY \times  (\mbS^{\rmG} \wedge \rmX_+)$ to be $id_{\rmY} \times \tr'^{\rmG}(\rmf_+)$.
\vskip .1cm
(iv) We define the {\it trace}, $\tau_{\rmX}^{\rmG}(\rmf_+)$ to be the composition of the pre-transfer with the map $ \mbS^{\group}\wedge \rmX_+ \ra \mbS^{\group}$
 collapsing all of $\rmX_+$ to $Spec \, \k_+$. Similarly we define $\tau_{\rmX}^{\rmG}(\rmf_{\rmY+})$ to be the composition of the pre-transfer $\tr'^{\rmG}(\rmf_{\rmY+})$ with
the map $\rmY \times (\mbS^{\group} \wedge \rmX_+)  \ra \rmY \times \mbS^{\group}$. {\it For the most part we will suppress the superscript $\rmG$ and denote the 
above traces as $\tau_{\rmX}(\rmf_+)$ or $\tau_{\rmX}(\rmf_{\rmY+})$.}
 \vskip .1cm
 (v) If $\cE^{\rmG} \in \Spt^{\rmG}(\k_{\rm mot})$ is a commutative ring spectrum,  $\cE = i^* \tilde{\mathbb P}(\tilde\rmU(\cE^{\rmG}))$ is the associated ring spectrum in $\Spt(\k_{\rm mot})$,
 and $\cE \wedge \rmX_+ \in \Spt(\k_{\rm mot})$ is dualizable, one defines  co-evaluation, 
 pre-transfer and trace maps similarly by replacing $\mbS^{\rmG} \wedge\rmX_+$ ($\mbS^{\rmG}$) by $\cE^{\rmG} \wedge \rmX_+$ ($\cE^{\rmG}$, \res).
\end{definition}

\vskip .2cm

The next goal is to define a transfer map that will define a wrong-way map in generalized cohomology for a $\rmG$-torsor $\rmp: \rmE \ra \rmB$,
as well as in Borel-style equivariant generalized motivic (and \'etale) cohomology associated to actions of linear algebraic groups.
Our approach follows closely the construction in \cite[section 3]{BG75}, in { spirit}.
\vskip .1cm
\subsubsection{\bf Convention}
\label{spec.grp}
Let ${\group}$ denote a linear algebraic group. We need to carry out the construction of the transfer in two distinct contexts:
(i) when the group ${\group}$ is {\it special} in Grothendieck's terminology: see \cite{Ch}. For example, ${\group}$ could be a $\GL_n$ for some $n$ or a finite product of $\GL_n$s \mbox{ and } (ii) when ${\group}$ is not
necessarily special. In the first case, every $\rmG$-torsor is locally trivial on the Zariski (and hence the Nisnevich) topology, 
while in the second case $\rmG$-torsors are locally trivial only in the \'etale topology.
\vskip .2cm
In both cases, we will let ${\BG}^{\it gm,m}$ (${\EG}^{\it gm,m}$) denote the $m$-th degree approximation to the classifying 
space of the group $\group$ (its principal $\group$-bundle, \res) as in  \cite{MV} (see also \cite{Tot}).
These are, in general, quasi-projective smooth schemes over $k$. It is important for us to observe that each ${\EG}^{\it gm,m}$,
with $m$ sufficiently large has $k$-rational points, where $k$ is the base field. (This will imply that ${\BG}^{\it gm,m}$, with $m$ sufficiently large also has
$k$-rational points.)
\vskip .2cm
Next we start with a $\rmG$-torsor $\rmE \ra \rmB$, with both $\rmE$ and $\rmB$ smooth quasi-projective schemes over $\k$. We will further assume that $\rmB$ is {\it always connected}.
Next, we will find affine replacements for these schemes. 
One may first find an affine
replacement ${\widetilde {\rmB}}$ for $\rmB$ (${\widetilde {{ \BG}^{\it gm,m}}}$ for ${ \BG}^{\it gm,m}$) by applying the well-known construction of Jouanolou (see \cite{Joun}) 
and then define ${\widetilde {\rmE}}$  (${\widetilde {{ \EG}^{\it gm,m}}}$) as the pull-back:
\be \begin{multline}
\label{Joun.def}
\begin{split}
{\widetilde {\rmE}} = {\widetilde {\rmB}}{\underset {\rmB} \times} {\rmE}, \tilde \rmp: {\widetilde {\rmE}} \ra {\widetilde {\rmB}}, \quad ({\widetilde {{ \EG}^{\it gm,m}}} = {\widetilde {{ \BG}^{\it gm,m}}}{\underset {{ \BG}^{\it gm,m}} \times} {{ \EG}^{\it gm,m}}, \tilde \rmp_m: {\widetilde {{ \EG}^{\it gm,m}}} \ra {\widetilde {{ \BG}^{\it gm,m}}})\\
\pi_{\rmY}: {\widetilde {\rmE}}_{\rmY \times \rmX}={\widetilde {\rmE}}\times_{\rmG} (\rmY \times \rmX) \ra {\widetilde {\rmE}}\times_{\rmG} \rmY= {\widetilde {\rmE}}_{\rmY}, \quad (\pi_{\rmY, m}: {\widetilde {{ \EG}^{\it gm,m}}}\times_{\rmG}(\rmY \times  \rmX) \ra {\widetilde {{ \EG}^{\it gm,m}}}\times_{\rmG} \rmY)\\
\pi : {\widetilde {\rmE}}_{\rmY}= {\widetilde {\rmE}}\times_{\rmG} \rmY \ra {\widetilde {\rmB}}, \quad (\pi_{ m}: {\widetilde \cE}_{m, \rmY}={\widetilde {{ \EG}^{\it gm,m}}}\times_{\rmG}\rmY  \ra {\widetilde {{ \BG}^{\it gm,m}}} ={\widetilde \cB}_m.)
\end{split} 
\end{multline} \ee

\vskip .2cm
\subsection{\bf The Borel construction applied to  simplicial presheaves with $\rmG$-action} \index{Borel construction}
\label{Borel.construct}
 We break this discussion into two cases,
depending on whether the group $\rmG$ is {\it special} in Grothendieck's classification (see \cite{Ch}). 
In both cases, $\Spc_*^{\rmG}(\k_{\rm mot})$ will denote the category of pointed $\rmG$-equivariant presheaves on 
{\it the big Nisnevich site} of $\k$ as in Definition ~\ref{equiv.prshvs}.
\vskip .1cm
{\it Case 1: when $\rmG$ is special}. Recall this includes all the linear algebraic groups ${\rm GL}_n$, ${\rm SL}_n$, ${\rm Sp}_{2n}$, $n \ge 1$. 
In this case, we start with the construction (i.e., the functor):
\be \begin{equation}
\label{Borel.1}
 \Spc_*^{\rmG}(\k_{\rm mot}) \ra \Spc_*({\widetilde {\rmB}}), {\it X} \mapsto {\widetilde {\rmE}}\times_{\rmG} {\it X} 
\end{equation} \ee
where the quotient construction is explained below. (If we start with an unpointed simplicial presheaf $\rmX$, we let ${\it X} = \rmX_+$ and
we will always assume that the action by $\rmG$ on ${\it X}$ preserves the base point. Therefore,  there is a canonical section 
${\widetilde {\rmB}} \ra {\widetilde {\rmE}}\times_{\rmG} \itX$.) Clearly this extends to a functor:
\be \begin{equation}
\label{Borel.2}
\Spt^{\rmG}(\k_{\rm mot}) \ra {\widetilde \Spt}^{\rmG}({\widetilde {\rmB}}), \X \mapsto {\widetilde {\rmE}}\times_{\rmG} \X.
\end{equation} \ee
where ${\widetilde \Spt}^{\rmG}({\widetilde {\rmB}}) =[\Sph^{\rmG}, \Spc_*(\tilde \rmB)]$.
\vskip .1cm
 In ~\eqref{Borel.1}, one cannot view the
product ${\widetilde {\rm E}} \times \itX$  as a presheaf on the big Nisnevich site and take the
quotient by the action of ${\group}$, with ${\group}$ again viewed as a Nisnevich presheaf: though such a quotient will
be a presheaf on the big Nisnevich site, this will not be the presheaf represented by the scheme (or algebraic space) ${\widetilde {\rmE}}\times_{\rmG}\itX$, when $\itX$ is a scheme.
In order to get this latter presheaf, when ${\group}$ is special, one needs to start with a Zariski open cover $\{\rmU_i|i\}$ of ${\widetilde {\rmB}}$  over
which ${\widetilde {\rmE}}$  is trivial, and then glue together the sheaves  $\rmU_i \times \itX$
making use of the gluing data provided by the torsor ${\widetilde {\rmE}} \ra {\widetilde {\rmB}}$. 
\vskip .2cm
A nice way to
view this construction is as follows, at least when $\itX$ is a Nisnevich sheaf: one needs to in fact take the {\it quotient sheaf} associated to the presheaf quotient
of ${\widetilde {\rmE}} \times \itX$  by the ${\group}$-action on the big Nisnevich site. Then this produces the right object. 
\vskip .2cm 
Denoting by $({\widetilde {\rmE}}\times_{\rmG}\X)_{|\rmU_i}$  the restriction of ${\widetilde {\rmE}}$  to $\rmU_i$, it is clear that $({\widetilde {\rmE}}\times_{\rmG}\X)_{|\rmU_i}$
identifies with
 $\rmU_i \times \X$. Therefore,  it is clear that the construction in ~\eqref{Borel.2} sends a $\rmG$-equivariant map $\alpha: \X \ra \Y$ so that $\tilde \rmU(\alpha)$ is a (stable) weak-equivalence in
 ${\widetilde {\Spt}}^{\rmG}(\k_{\rm mot})$ to a (stable) weak-equivalence in ${\widetilde {\Spt}}^{\rmG}({\widetilde {\rmB}})$.
 \vskip .2cm
{\it Case 2}: Next assume that ${\group}$ is {\it not necessarily special, in which case we will assume the base field $k$ is infinite} to avoid
the issues discussed in \cite[ Example 2.10, 4.2]{MV}.  Observe that the list of non-special linear algebraic groups includes all the linear algebraic groups
 such as all finite groups, ${\rm PGL}_n$, ${\rm O}(n)$, $n \ge 1$ etc.  Recall $\Spc_*(\k_{et})$ denotes the ${\mathbb A}^1$-localized category of pointed simplicial presheaves on the big \'etale site
 $\Sm_{\k, et}$.
  Let  ${\rm BG}$ denote the simplicial classifying space of $\rmG$ viewed as a simplicial presheaf on the big \'etale site $\Sm_{\k, et}$ and let ${\widetilde {{\BG}^{\it gm,m}_{et}}}$ denote the scheme ${\widetilde {{\BG }^{\it gm,m}}}$ viewed as a simplicial presheaf on the big \'etale site $\Sm_{\k, et}$. Then the first
  observation we make is that  one obtains the weak-equivalence
  \be \begin{equation}
       \label{BG.1}
       {\rm BG} \simeq \colimm {\widetilde {{\BG}^{\it gm,m}_{et}}} 
      \end{equation} \ee
\vskip .1cm \noindent
 in $\Spc_*(\k_{et})$. To prove this one may proceed as follows. Either one may adopt 
 the same arguments as in \cite[p. 131 and  Lemma 2.5, Proposition 2.6 in 4.2]{MV} or consider the diagram:
 \be \begin{equation}
  \xymatrix{&{\EG \times_{{\rmG}} \EG ^{\it gm} } \ar@<1ex>[ld]_{p_1} \ar@<-1ex>[rd]^{p_2}\\
{\BG}   && {\BG^{\it gm}}.}
     \end{equation} \ee
\vskip .2cm \noindent
 Then, one may observe that the fibers of both maps $p_1$ and $p_2$ over a strictly Hensel ring are acyclic: the fibers
 of $p_1$ are acyclic because we have inverted ${\mathbb A}^1$ (and therefore, $\EG^{\it gm}$ is acyclic), and the fibers of $p_2$ are acyclic because they are the
 simplicial $\EG$. Thus $p_1$ and $p_2$ induce weak-equivalences of the corresponding simplicial sheaves.
 (See \cite[Theorem 1.5]{J22} for a similar argument at the level of equivariant derived categories.)
 Let $\epsilon: \Sm_{\k, et} \ra \Sm_{\k,Nis}$ denote the map of sites from the big \'etale site of
$\rmS= Spec  \, \k$ to the big Nisnevich site of $\rmS$. It follows therefore that one obtains the identification 
\be \begin{equation}
       \label{BG.2}
       \rmR\epsilon_*({\rm BG}) \simeq \rmR\epsilon_*( \colimm {\widetilde {{\BG} ^{\it gm,m}_{et}}}) \footnote{A main result of \cite[Proposition 2.6, p. 135]{MV} is that the term on the right is weakly-equivalent to $
        \epsilon_*(\colimm{\widetilde {\BG^{\it gm,m}}}) =\colimm \epsilon_*({\widetilde {\BG^{\it gm,m}}})$.}
\end{equation} \ee
\vskip .1cm \noindent
 in $\Spc_*(\k_{\rm mot})$. (Here we will use the injective model structure on simplicial presheaves prior to
 ${\mathbb A}^1$-localization: see ~\ref{inj.model}.) In this case, the construction ~\eqref{Borel.1} is replaced by:
 \be \begin{align}
   \label{et.case.Borel.const.1}
   \itX \mapsto \rmR\epsilon_*({\widetilde {\rmE}} \times_{\rmG}^{\it et} (a \circ \epsilon^*)(\itX)), &\quad \Spc_*^{\rmG}(\k_{\rm mot}) {\overset {a \circ \epsilon^*} \ra} \Spc_*^{\rmG}(\k_{et}) {\overset {{\widetilde {\rmE}} \times_{\rmG}^{\it et}(\quad)} \ra} \Spc_*^{\rmG}(\k_{et}) {\overset {\rmR\epsilon_*}\ra } {\Spc}_*(\rmR\epsilon_*({\widetilde {\rmB}_{et}}))
   \end{align} \ee
Here we have adopted the following conventions: the functor $\epsilon^*$ sends a simplicial presheaf on the big Nisnevich site $\Sm_{\k,Nis}$ to
a simplicial presheaf on the big \'etale site $\Sm_{\k,et}$, and the functor $a$ sends a simplicial presheaf on the big \'etale site $\Sm_{\k,et}$
to its associated sheaf on the same site, and 
the superscript $et$ denotes the fact we are {\it taking quotient sheaves on the \'etale site}. 
${\Spc}_*(\rmR\epsilon_*({\widetilde {{\rmB}_{et}}}))$
 denotes the category of simplicial presheaves on 
$\Sm_{\k, Nis}$ pointed over the simplicial presheaf $\rmR\epsilon_*({\widetilde {{\rmB}_{et}}})$. 
\vskip .1cm
Clearly this
extends to a functor
\be \begin{align}
   \label{et.case.Borel.const.2}
\X \mapsto  \rmR\epsilon_*({\widetilde {\rmE}} \times_{\rmG}^{\it et} (a \circ \epsilon^* )(\X)), &\quad \Spt^{\rmG}(\k_{\rm mot}) {\overset { a \circ \epsilon^*} \ra} \Spt^{\rmG}(\k_{et}) {\overset {\rmR\epsilon_*\circ{\widetilde {\rmE}} \times_{\rmG}^{\it et}(\quad)} \longrightarrow} {\widetilde {\Spt}}^{\rmG}(\rmR\epsilon_*({\widetilde {\rmB}_{et}}))  
\end{align} \ee
where ${\widetilde {\Spt}}^{\rmG}(\rmR\epsilon_*({\widetilde {\rmB}_{et}})) =[\Sph^{\rmG}, {\Spc}_*(\rmR\epsilon_*({\widetilde {{\rmB}_{et}}}))]$.
\vskip .2cm
If $\{\rmU_i| i \in I\}$ is an \'etale cover of ${\widetilde {\rmB}}$ over which ${\widetilde {\rmE}}$
is trivial, the same argument as above shows that 
\[({\widetilde {\rmE }} \times_{\rmG}^{et} (a \circ \epsilon^*)(\X))_{|{\rmU}_i}= \rmU_i \times (a \circ \epsilon^*)(\X),\]
so that
the functor $\X \mapsto {\widetilde {\rmE}} \times_{\rmG}^{\it et} (a \circ \epsilon^*)( \X)$ 
sends a $\rmG$-equivariant map $\alpha:\X \ra \Y$ for which ${\tilde \rmU}(\alpha)$ is a (stable) weak-equivalence in ${\widetilde \Spt}^{\rmG}(\k_{\rm mot})$ to a (stable) weak-equivalence in ${\widetilde \Spt}^{\rmG}({{\widetilde \rmB }_{et}})$.  
 Therefore, the functor $\X \mapsto \rmR\epsilon_*({\widetilde {\rmE}} \times_{\rmG}^{\it et} (a _{et}(\X)))$
sends a $\rmG$-equivariant map $\alpha:\X \ra \Y$ for which ${\tilde \rmU}(\alpha)$ is a (stable) weak-equivalence in ${\widetilde \Spt}^{\rmG}(\k_{\rm mot})$ to a (stable) weak-equivalence in ${\widetilde \Spt}^{\rmG}({ \rmR\epsilon_*({\widetilde \rmB}_{et})  }).$
\vskip .1cm
In case $\itX$ is already a sheaf on the big \'etale site, $(a \circ \epsilon^*)(\itX) = \itX$ and therefore, we may replace $(a \circ \epsilon^*)(\itX)$ in ~\eqref{et.case.Borel.const.1} 
by just $\itX$ in the definition of the Borel construction.  (This applies to the case where $\itX = \rmX$ is a scheme.)
\vskip .1cm
\begin{terminology}
 \label{Borel.not}
 Throughout the remainder of the paper, we will abbreviate the functor in ~\eqref{et.case.Borel.const.1} (~\eqref{et.case.Borel.const.2}) by
 \[\itX \mapsto \rmR\epsilon_*({\widetilde {\rmE}} \times_{\rmG}^{\it et} \itX),  \itX \in \Spc_*^{\rmG}(\k_{\rm mot}), \quad (\X \mapsto  \rmR\epsilon_*({\widetilde {\rmE}} \times_{\rmG}^{\it et} \X), \X \in \Spt^{\rmG}(\k_{\rm mot}), \res) .\]
\end{terminology}
\vskip .2cm
Though there is a discussion of the classifying spaces of linear algebraic groups in \cite[4.2]{MV}, it lacks a corresponding discussion
 on the Borel construction $\EG^{\it gm,m}\times_{\rmG} \rmX$, for $\rmX$ a smooth scheme. We complete our discussion, by providing a comparison of $\EG^{\it gm,m}\times_{\rmG} X$
with $\rmR\epsilon_*({\widetilde {{\EG }^{\it gm,m }}} \times_{\rmG}^{\it et} \rmX)$ when $\rmX$ is a smooth scheme. We first replace 
$\colimm {\BG}^{\it gm,m}_{et}$ and $\colimm\EG_{et}^{\it gm,m}\times_{\rmG}^{et} X$ by fibrant simplicial presheaves ${\widehat {\BG}}_{et}$ and
${\widehat {\EG_{et}\times_{\rmG}^{et} X}}$, fibrant in $\Spc_*(\k_{et})$,  so that the induced map ${\widehat {\EG_{et}\times_{\rmG}^{et} X}} \ra
 {\widehat {\BG}}_{et}$ is a fibration with fiber $\hat \rmX$, which is a fibrant replacement for $\rmX$.
 Let $\rmU_{\infty} = \colimm \EG^{\it gm,m}$. Now one forms the cartesian square in $\Spc_*(\k_{et})$:
\be \begin{equation}
     \xymatrix{{{\rmE(\rmU_{\infty}, \rmG)_{et}} \times_{\rmG}^{et}\hat \rmX} \ar@<1ex>[r] \ar@<1ex>[d] & {\widehat {{\EG}_{et} \times_{\rmG}^{et}\rmX}} \ar@<1ex>[d]\\
               {\rmB(\rmU_{\infty}, \rmG)_{et}} \ar@<1ex>[r] & {{\widehat {\BG}}_{et}}.}
    \end{equation} \ee
\vskip .1cm \noindent
Here $\rmE(\rmU_{\infty}, \rmG)_{et}$ is the \'etale simplicial presheaf given in degree $n$ by $\rmU_{\infty}^{n+1}$, and with the 
structure maps provided by the projections of $\rmU_{\infty}^m $ to the various factors $\rmU_{\infty}$ and by the diagonal maps
$\rmU_{\infty} \ra \rmU_{\infty}^m$. $\rmB(\rmU_{\infty}, \rmG)_{et} = \rmE(\rmU_{\infty}, \rmG)_{et}/G$. 
This square remains a cartesian square on applying the push-forward $\epsilon_*$ to the Nisnevich site. 
\cite[Lemma 2.5, 4.2]{MV} shows that the resulting map in the bottom row is an isomorphism in $\Spc_*(\k_{\rm mot})$, so that so is the resulting map in 
the top row. Finally an argument exactly as on \cite[p. 136]{MV} shows that one obtains an identification 
$\epsilon_*({{\rmE(\rmU_{\infty}, \rmG)_{et}} \times_{\rmG}^{et}\hat \rmX}) \simeq \epsilon _*(\rmU_{\infty} \times_{\rmG}^{et}\hat \rmX) =  \epsilon_*(\colimm \EG^{\it gm,m}\times_{\rmG}^{et}\hat \rmX) $. Therefore,  we obtain the identification for a smooth scheme $\rmX$:
\be \begin{equation}
     \label{ident.Borel.const}
\rmR\epsilon_*( \colimm {\widetilde {{\EG }^{\it gm,m }}} \times_{\rmG}^{\it et} \rmX) \simeq \rmR\epsilon_*( \colimm {{{\EG }^{\it gm,m }}} \times_{\rmG}^{\it et} \rmX)=\epsilon_*({\widehat {{\EG}_{et} \times_{\rmG}^{et}\rmX}})  \simeq \epsilon_*(\colimm (\EG^{\it gm,m}\times_{\rmG}^{et} \hat \rmX)).
\end{equation} \ee
\vskip .2cm
Finally, for convenience in the following steps, { we will denote both the Borel constructions given in ~\eqref{Borel.2} and 
~\eqref{et.case.Borel.const.2} by the notation} $\X \mapsto {\widetilde {\rmE}}\times_{\rmG} \X$. Moreover, we will 
denote by ${\widetilde {\rmB}}$, the object denoted by this symbol in ~\eqref{Joun.def} when $\rmG$ is special, and the object $\rmR\epsilon_*({\widetilde {\rmB}}_{et})$ considered in 
~\eqref{et.case.Borel.const.1} when $\rmG$ is not special.

\subsection{\bf Construction of the transfer}
\label{Transf.const} \index{transfer}
Next we proceed to construct the transfer as a stable map, i.e. a map in $\SH(k)$, when ${\Sigma^{\infty}_{\T}}\rmX_+$ is dualizable in $\SH(k)$ and $\rmG$ is special (and a variant of this map when $\rmG$ is non-special):
\be \begin{equation}
     \label{transfer.1}
\tr(\rmf_{\rmY}): {\Sigma^{\infty}_{\T}}({\widetilde {\rmE}}\times_{\rmG} \rmY)_+ \ra {\Sigma^{\infty}_{\T}}({\widetilde {\rmE}}\times_{\rmG}(\rmY\times \rmX))_+ \quad (\tr(\rmf_{\rmY}): ({\Sigma^{\infty}_{\T}}({\widetilde {{\EG}^{\it gm,m}}}\times_{\rmG} \rmY)_+ \ra  {\Sigma^{\infty}_{\T}}({\widetilde {{\EG}^{\it gm,m}}}\times_{\rmG} (\rmY \times \rmX))_+.
    \end{equation} \ee
\vskip .2cm \noindent
This will be constructed as a composition of several maps in $\Spt(\k_{\rm mot})$, with some of the maps going the wrong-way, and these wrong-way maps will all be
weak-equivalences in $\Spt(\k_{\rm mot})$. 
In case $\cE \wedge \rmX_+ \in \Spt(\k_{\rm mot}, \cE)$ is dualizable for a commutative ring spectrum $\cE^{\rmG} \in \Spt^{\group}(\k_{\rm mot})$ with $\cE= i^*(\tilde {\mathbb P}\tilde \rmU(\cE^{\rmG}))$, ($\cE \wedge \rmX_+ \in \Spt(\k_{et}, \cE)$ is dualizable 
for a commutative ring spectrum $\cE^{\rmG} \in \Spt^{\group}(\k_{et})$, so that $\cE$ is $\ell$-complete for some prime $\ell \ne char(k)$, \res) the transfer we
obtain will be of the following form when $\rmG$ is special (and a variant  of this map when $\rmG$ is non-special):
\be \begin{equation}
     \label{transfer.2}
\tr(\rmf_{\rmY}): \cE \wedge ({\widetilde {\rmE}}\times_{\rmG} \rmY)_+ \ra \cE \wedge ({\widetilde {\rmE}}\times_{\rmG} (\rmY \times \rmX))_+ \quad (\tr(\rmf_{\rmY}): \cE \wedge ({\widetilde {{\EG }^{\it gm,m}}}\times_{\rmG} \rmY)_+ \ra \cE \wedge ({\widetilde {{\EG }^{\it gm,m}}}\times_{\rmG}(\rmY \times \rmX))_+.)
\end{equation} \ee
\vskip .2cm \noindent
\begin{remark}
 \label{just.USptG}
The following remarks may provide some insight and motivation to the construction of the transfer discussed in Steps 1 through 5 below. We have tried to define a
transfer that depends only on the $\rmG$-object $\rmX$ and the $\rmG$-equivariant self-map $\rmf$ and which does {\it not} depend on any further
choices. This makes it necessary to start with the $\group$-equivariant pre-transfer as in ~\eqref{G.equiv.pretransfer}.
As a result, we are forced to make use of the framework of the category ${\widetilde {\Spt}}^{\rmG}(\k_{\rm mot})$. However, if one chooses to replace the
$\group$-equivariant sphere spectrum $\mbS^{\group}$ by just the suspension spectrum of the Thom-space $\rmT_{\rmV}$, for a 
fixed (but large enough) representation $\rmV$ of $\group$, then the use of the category ${\widetilde {\Spt}}^{\rmG}(\k_{\rm mot})$ could be circumvented
 by just using a variant of  Proposition ~\ref{functorial.rep.inhert.G.act} valid for suspension spectra. The construction
 of the transfer in \cite{BG75} in fact adopts this latter approach: in their framework, the co-evaluation map corresponds to a
 Thom-Pontrjagin collapse map associated to the Thom-space of a fixed $\group$-representation. Such an approach does not seem to
 work in general in the motivic context, though it could be made to work when $\rmX$ denotes a {\it projective smooth scheme}, provided one makes use of the Voevodsky collapse (see Appendix A, Definition \ref{virt.norm.bundle.proj}) in the place of the classical Thom-Pontrjagin collapse.
 \end{remark}
 \vskip .1cm

\vskip .2cm
{\it Step 1}. As the next step in the construction of the transfer map $\tr(\rmf_{\rmY})$, we start with the 
$\group$-equivariant pre-transfer 
 $\tr'^{\rmG}(\rmf_{\rmY+})$ in ~\eqref{G.equiv.pretransfer} to obtain the stable map over ${\widetilde {\rmE}}_{\rmY}$, i.e., as a composition of several maps in ${\widetilde \Spt}^{\rmG}({{\widetilde {\rmE}}}_{\rmY})$, where the wrong-way maps are all weak-equivalences.
\be \begin{equation}
     \label{transf.step.1.0}
{\widetilde {{\rmE}}} \times_{\rmG}(\rmY \times  \mbS^{\group}) {\overset {id\times_{\rmG} tr'^{\group}(\rmf_{\rmY+})} \ra}{\widetilde {{\rmE}}} \times_{\rmG}(\rmY \times (\mbS^{\group} \wedge \rmX_+)).  
\end{equation} \ee
    \vskip .2cm \noindent
(Here we are making use of the observation that the above Borel construction 
preserves weak-equivalences as observed in the discussion on the Borel construction, so that we can suppress the fact that the above map is in fact a
composition of several maps, some of which go the wrong-way as observed in ~\eqref{G.equiv.pretransfer}.) On applying the construction ${\widetilde {{\rmE}}} \times_{\rmG}(\quad )$ with a ${\group}$-equivariant ring spectrum $\cE^{\rmG}$ 
(as in ~\eqref{choice.ring.spectra}) in the place of $\mbS^{\group}$, 
 the resulting stable map takes on the 
form:
\be \begin{equation}
     \label{transf.step.1.1}
{\widetilde {\rmE}} \times_{\rmG} (\rmY \times \cE^{\rmG}) {\overset {id\times_{\rmG} tr'^{\group}(\rmf_{\rmY+})} \ra} {\widetilde {\rmE}} \times_{\rmG}(\rmY \times (\cE^{\rmG}\wedge \rmX_+)).
\end{equation} \ee
\begin{remark}
	\label{insight.rem.steps}
 The remaining steps in the construction of the transfer may be easily explained by fact that the sphere spectrum $\mbS^{\rmG}$ and the ring spectrum  $\cE^{\rmG}$ appearing above
  have non-trivial actions by $\rmG$, so that neither the source nor the target of the maps in ~\eqref{transf.step.1.0} and
  ~\eqref{transf.step.1.1} will become suspension spectra of ${\widetilde {\rmE}}_{\rmY}$ or ${\widetilde {\rmE}}\times_{\rmG}(\rmY \times \rmX)_+$ without 
  the considerable efforts in the remaining steps. We will discuss the remaining steps in detail only for the sphere spectrum $\mbS^{\rmG}$. This suffices, since the
   only other ring spectra $\cE^{\rmG}$ we consider will be restricted to those appearing in the list in ~\eqref{choice.ring.spectra}.
\end{remark}
\vskip .2cm \noindent

\vskip .2cm \noindent
{\it Step 2}. Next let $\rmV$ denote a fixed (but arbitrary) finite dimensional representation of the group $\rmG$.\footnote{Here we use $\rmV$ to
denote both the representation of $\rmG$ and the corresponding symmetric algebra over $k$.} 
At this point we need to briefly consider two cases, (a) where $\rmG$ is special and (b) where it is not. In case (a), it should be
clear that 
\be \begin{equation}
     \label{psi}
{\widetilde {\rmE}}\times_{\rmG} \rmV \mbox{ is a vector bundle } \xi^{\rmV} \mbox{ on the affine scheme } {\widetilde  {\rmB}},
\end{equation} \ee
where the quotient construction is done as in ~\eqref{Borel.1}, that is on the Zariski site. In case (b), one considers instead:
\be \begin{equation}
     \label{psi.1}
{\widetilde {\rmE}}\times_{\rmG}^{et} \rmV,
\end{equation} \ee
where the quotient is taken on the \'etale topology. Apriori, this is a vector bundle that is locally trivial on the \'etale topology of
$\widetilde \rmB$. But  any such vector bundle corresponds
to a ${\rm GL}_n$-torsor on the \'etale topology of $\widetilde \rmB$, and hence (by Hilbert's theorem 90: see \cite[Chapter III, proposition 4.9]{Mil}), 
is in fact locally trivial on the Zariski topology of $\widetilde \rmB$. We will denote this vector bundle also  by $\xi^{\rmV}$.
\vskip .2cm
Since ${\widetilde  {\rmB}}$ is an affine scheme over $\Speck$, we can find a complimentary
vector bundle $\eta^{\rmV}$ on ${\widetilde  {\rmB}}$ so that 
\be \begin{equation}
     \label{eta.0}
\xi^{\rmV} \oplus \eta^{\rmV} \mbox{ is a trivial bundle over } {\widetilde  {\rmB}} \mbox{ and of rank } \rmN, \mbox{ for some integer } \rmN.
\end{equation} \ee
\vskip .1cm
For the remainder of this step, we will consider the case when ${\widetilde {\rmE}} = {\widetilde {{\EG}^{\it gm,m}}}$ and 
${\widetilde {\rmB}} = {\widetilde  {{\BG}^{\it gm,m }}}$, for a fixed integer $m$. We will denote the first by $\cE_m$ and the latter by $\cB_m$.
We will denote the vector bundle ${\widetilde {{\EG}^{\it gm,m}}}\times_{\rmG} \rmV$
(${\widetilde {{\EG}^{\it gm,m}}}\times_{\rmG}^{et} \rmV$) $\mbox{ on the affine scheme }  \cB_m={\widetilde  {{\BG}^{\it gm,m }}}$ by 
$ \xi_m^{\rmV}$. The complimentary vector bundle $\eta^{\rmV}$ chosen above will now denoted $\eta^{\rmV}_m$.
We proceed to show that we can choose the integer $\rmN$ independent of $m$, so that
a single choice of $\rmN$ will work for all $m$. Since the map ${\EG}^{\it gm,m} \ra {\BG}^{\it gm,m}$ is affine, one can readily see that the scheme ${\widetilde {{\EG}^{\it gm,m}}}$ is also an affine scheme.  Let $\rmR_m$ denote the co-ordinate ring of 
${\widetilde {{\EG}^{\it gm,m}}}$ and let $\rmR = \limm \rmR_m$. Under the correspondence between projective modules over $\rmR_m^{\rmG}$ and
vector bundles over ${\rm Spec} \, \rmR_m^{\rmG}$, $(\rmR_m{\underset {\k} \otimes}\rmV)^{\group}$ corresponds to ${\widetilde {{\EG}^{\it gm,m}}} \times_{\rmG} \rmV = \xi_m^{\rmV}$.
\vskip .2cm
We proceed to  show that 
\[(\rmR{\underset {\k} \otimes}\rmV)^{\group}\] 
is a { finitely generated projective module over the ring $\rmR^{\group}$}.  To see this, we proceed as follows.
Let $\rmI_m$ be the ideal defining ${\widetilde {{\BG}^{\it gm,m}}}$ as a closed
subscheme in ${\rm Spec}\, (\rmR^{\group}) $. Then, $(\rmR{\underset {\k} \otimes}\rmV)^{\group}/(\rmI_m {\underset {\rmR^{\group}} \otimes }(\rmR{\underset {\k} \otimes}\rmV)^{\group})$ corresponds to the vector bundle $ \xi_m^{\rmV}$, and therefore,  is a finitely generated projective $\rmR^{\group}/\rmI_m$-module. In fact, if ${\frak M}$ denotes a maximal ideal in the ring $\rmR^{\rmG}$ and $\bar \rmI_m$ denotes the image of the ideal $\rmI_m$
in the local ring $\rmR^{\rmG}_{({\frak M})}$, then one can see that the ranks of the
inverse system of free modules $\{(\rmR{\underset {\k} \otimes}\rmV)^{\group}_{({\frak M})}/(\bar \rmI_m {\underset {\rmR^{\group}} \otimes }(\rmR{\underset {\k} \otimes}\rmV)^{\group}_{({\frak M})})|m\}$ are the same finite integer given by the rank of $\rmV$. Therefore,
their inverse limit, which identifies with $(\rmR {\underset {\k} \otimes}\rmV)^{\rmG}_{({\frak M})}$ is a free $\rmR^{\rmG}_{({\frak M})}$-module of rank equal to the rank of $\rmV$. It follows that, $(\rmR {\underset {\k }\otimes}\rmV)^{\rmG}$ is a finitely generated projective module over the ring $\rmR^{\rmG}$.
\vskip .2cm
Therefore, there exists some finitely generated free $\rmR^{\group}$-module $\rmF$ (of rank $\rmN$) and a {\it split } surjection
 \be \begin{equation}
   \label{zeta}
   \zeta: \rmF \twoheadrightarrow  (\rmR{\underset {\k} \otimes}\rmV)^{\group}.
 \end{equation} \ee
 Then one sees that the induced maps
\be \begin{equation}
   \label{eta.1}
   \zeta/\rmI_m: \rmF/(\rmI_m {\underset {\rmR^{\group}} \otimes }\rmF) \twoheadrightarrow  (\rmR{\underset {\k} \otimes}\rmV)^{\group}/(\rmI_m {\underset {\rmR^{\group}} \otimes }(\rmR{\underset {\k} \otimes}\rmV)^{\group})
 \end{equation} \ee
 are also split surjections for each $m$, and these splittings are in fact compatible, as they are all induced by the splitting to the map in ~\eqref{zeta}.  Therefore, 
 we obtain a compatible collection of complements to the inverse system of bundles  $ \xi_m^{\rmV}$
 in the trivial bundle of rank $\rmN$ over ${\widetilde {\BG^{\it gm,m}}}$, compatible as $m$ varies. We denote the
  complement to  $ \xi_m^{\rmV}$ in the trivial bundle of rank $\rmN$ over 
  ${\widetilde {\BG^{\it gm,m}}}$ as $\eta_m^{\rmV}$.
 
 \vskip .2cm
 Next we will consider {\it the case the group ${\group}$ is special}, in which the case the arguments in the 
 following paragraph hold.
 Denoting by $\rmT_{\rmV}$ the Thom-space of the representation $\rmV$, the bundle
${\widetilde {{\EG}^{\it gm,m}}}\times_{\rmG}\rmT_{\rmV}$ is a sphere-bundle over $\cB_m$, which will be denoted $\rmS(\xi_m^{\rmV} \oplus 1)$
in the terminology of  Appendix, ~\eqref{props.Thom.spaces}. Similarly $\rmS(\eta_m^{\rmV} \oplus 1)$ denotes the corresponding sphere bundle over $\cB_m$.
Now Lemma ~\ref{AppA.fiberwise.join.2} (see the Appendix)  shows that one obtains the identification:
\be \begin{equation}
     \label{Ch4.Thom.sp.1}
\rmS(\xi_m^{\rmV} \oplus 1) \wedge ^{\cB_m} \rmS(\eta_m^{\rmV} \oplus 1) \simeq \rmS(\xi_m^{\rmV} \oplus \eta_m^{\rmV} \oplus 1).
    \end{equation} \ee
Observe that there is a canonical section $s_{\xi^{\rmV}}:  \cB_m \ra {\widetilde {{\EG}^{\it gm,m}}}\times_{\rmG}\rmT_{\rmV} = \rmS(\xi_m^{\rmV} \oplus 1)$,
 and a canonical section $s_{\eta^{\rmV}}:\cB_m \ra   \rmS(\eta_m^{\rmV} \oplus 1)$, which together define a 
section $s_m:  \cB_m \ra \rmS(\xi_m^{\rmV} \oplus 1) \wedge ^{\cB_m} \rmS(\eta_m^{\rmV} \oplus 1)$ of pointed simplicial presheaves
 over $\cB_m={\widetilde {{\BG}^{\it gm,m}}}$. Then 
  the quotient $(\rmS(\xi_m^{\rmV} \oplus 1) \wedge ^{\cB_m} \rmS(\eta _m^{\rmV} \oplus 1))/ s(\cB_m)$
identifies with the Thom-space of the bundle $\xi_m^{\rmV}\oplus \eta_m^{\rmV}$. Since $\eta_m^{\rmV}$ was chosen to be a vector bundle complimentary to
$\xi_m^{\rmV}$, $\xi_m^{\rmV} \oplus \eta_m^{\rmV}$ is a trivial bundle (of rank $\rmN$) so that  the above Thom-space identifies with
${\T}^{\wedge \rmN} ({\widetilde {\BG}^{\it gm,m}})_+$.  Moreover, this holds independent of $m$.
\vskip .2cm 
In case the group ${\group}$ is {\it not special}, one has to replace ${\widetilde {{\EG}^{\it gm,m}}}\times_{\rmG}\rmT_{\rmV}$ by
${\widetilde {{\EG}^{\it gm,m}}}{\underset {\rmG} \times^{et}} \epsilon^*(\rmT_{\rmV})$. This identifies with $\rmS(\epsilon^*(\xi_m^{\rmV} \oplus 1)))$.
Now one has to take the smash product of the above object with $\rmS(\epsilon^*(\eta_m^{\rmV} \oplus 1))$ over $\epsilon^*(\cB_m)$. This will 
identify with $\rmS(\epsilon^*(\xi_m^{\rmV} \oplus \eta_m^{\rmV} \oplus 1))$. Since $\eta^{\rmV}$ was chosen to be complementary to $\xi^{\rmV}$, it follows that the bundle
$\xi^{\rmV} \oplus \eta^{\rmV}$ is a trivial vector bundle of rank $\rmN$, so that $\rmS(\epsilon^*(\xi_m^{\rmV} \oplus \eta_m^{\rmV} \oplus 1))
\cong \epsilon^*(\T^{\wedge \rmN}) \times \cB_m$. Then
 one applies $\rmR\epsilon_*$ to the resulting object to obtain a pointed simplicial presheaf over $\rmR\epsilon_*(\cB_m)$. 
 This identifies with $\rmR\epsilon_*(\epsilon^*(\T^{\wedge \rmN})) \times R\epsilon_*(({\widetilde {\BG}^{\it gm,m}}))$.
Finally one has to collapse the corresponding section to obtain $\rmR\epsilon_*(\epsilon^*(\T^{\wedge \rmN})) \wedge  (R\epsilon_*(({\widetilde {\BG}^{\it gm,m}})))_+$.

\vskip .2cm \noindent
{\it  Let $\pi_{\rmY}$ denote either of the two projections ${\widetilde {\rmE}}\times_{\rmG}(\rmY \times \rmX) \ra {\widetilde {\rmE}}\times_{\rmG}(\rmY)$ or $ \cE_m ={\widetilde {{\EG}^{\it gm,m}}}\times_{\rmG} (\rmY \times \rmX) \ra {\widetilde {{\EG}^{\it gm,m}}}\times_{\rmG} \rmY= \cB_m$. 
Since the second case is subsumed by the first, we will only discuss the first case explicitly in steps 3 through 5.}
\vskip .1cm
{\it Step 3}. First we will again assume that the group-scheme ${\group}$ is special. 
Now observe that the sphere bundle ${\widetilde {\rmE}}\times_{\rmG}(\rmY \times  \rmT_{\rmV})$ identifies with the
pullback $\rmS(\pi^*(\xi^{\rmV}) \oplus 1) = \pi^*(\rmS(\xi^{\rmV}\oplus 1))$ and the sphere bundle ${\widetilde {\rmE}}\times_{\rmG}(\rmY \times \rmX \times \rmT_{\rmV})$ identifies with the
pullback $\rmS(\pi_{\rmY}^*\pi^*(\xi^{\rmV}) \oplus 1) = \pi_{\rmY}^*\pi^*(\rmS(\xi^{\rmV}\oplus 1))$, where the maps $\pi$ and $\pi_{\rmY}$ are as in ~\eqref{Joun.def}.
Next consider
\[\rmS(\pi_{\rmY}^*\pi^*(\xi^{\rmV}) \oplus 1)\wedge^{\tilde \rmE_{\rmY}} \rmS(\pi_{\rmY}^*\pi_{}^*(\eta^{\rmV}) \oplus 1) = \pi_{\rmY}^*\pi^*(\rmS(\xi^{\rmV} \oplus 1)) \wedge^{\widetilde \rmE_{\rmY}} \pi_{\rmY}^*\pi^*(\rmS(\eta^{\rmV} \oplus 1)).\]
This is a sphere bundle over $\widetilde \rmE_{\rmY}$ and it has a canonical section, which we will denote $\sigma$, collapsing which provides the Thom-space of the
bundle $\rmS(\pi_{\rmY}^*\pi^*(\xi^{\rmV} \oplus \eta^{\rmV} \oplus 1))$. Since $\eta^{\rmV}$ was chosen to be complementary to $\xi^{\rmV}$, it follows that the bundle
$\pi_{\rmY}^*\pi^*(\xi^{\rmV} \oplus \eta^{\rmV})$ is trivial, so that the resulting Thom-space identifies with $\T^{\wedge \rmN}({\widetilde {\rmE}}\times_{\rmG}(\rmY \times \rmX))_+$.
\vskip .1cm 
When the group-scheme ${\group}$ is {\it not special}, one adopts an argument as in the last paragraph of Step 2 to obtain
a corresponding result.
\vskip .2cm \noindent
{\it Step 4}. Observe that there is section $t': {\widetilde {\rmE}}_{\rmY} \ra {\widetilde {\rmE}}\times_{\rmG}(\rmY \times (\rmX_+ \wedge \rmT_{\rmV}))$.
Combining that with the canonical section ${\widetilde \rmE}_{\rmY} \ra \rmS(\pi^*(\eta^{\rmV} \oplus 1))$ defines a section $t: {\widetilde {\rmE}}_{\rmY} \ra ({\widetilde {\rmE}}\times_{\rmG}(\rmY \times (\rmX_+ \wedge \rmT_{\rmV})) \wedge ^{{\widetilde \rmE}_{\rmY}}S(\pi^*(\eta^{\rmV} \oplus 1))$.
Now a key observation is that $({\widetilde {\rmE}}{\underset {\rmG }\times} (\rmY \times (\rmX_+ \wedge \rmT_{\rmV}))\wedge ^{{\widetilde \rmE}_{\rmY}} \rmS(\pi^*(\eta^{\rmV} \oplus1))$ 
is an object defined over ${\widetilde \rmE}_{\rmY}$  and 
that collapsing the section $t$ identifies the resulting object with 
$(\rmS(\pi_{\rmY}^*\pi^*(\xi^{\rmV} \oplus 1)) \wedge ^{{\widetilde \rmE}_{\rmY\times \rmX}} \rmS(\pi_{\rmY}^*\pi^*(\eta^{\rmV} \oplus 1)))/\sigma({\widetilde \rmE}_{\rmY\times \rmX})$, where $\sigma:{\widetilde \rmE}_{\rmY\times \rmX} \ra
\rmS(\pi_{\rmY}^*\pi^*(\xi^{\rmV} \oplus 1)) \wedge ^{{\widetilde \rmE}_{\rmY}} \rmS(\pi_{\rmY}^*\pi^*(\eta^{\rmV} \oplus 1))$ is the canonical section. (See \cite[(3.7) and (3.8)]{BG75} for the classical case.)
\vskip .2cm
One may see this as follows, first under the assumption that the group-scheme ${\group}$ is special.
Assume that $\{\rmU_i|i\}$ is a Zariski open cover of ${\widetilde  {\rmB}}$ over which the ${\group}$-torsor
$\rmp: {\widetilde  {\rmE}} \ra {\widetilde  {\rmB}}$ trivializes.  
$(\rmS(\pi_{\rmY}^*\pi^*(\xi^{\rmV} \oplus 1))_{|\rmU_i}$ now is of the form: $\rmU_i \times (\rmY \times \rmX \times \rmT_{\rmV}) \ra \rmU_i \times \rmY \times \rmX $.
 We may assume that the vector
bundle $\eta^{\rmV}$ also trivializes over the cover $\{\rmU_i|i\}$.  Then
$(\rmS(\pi_{\rmY}^*\pi^*(\xi^{\rmV} \oplus 1)) \wedge ^{{\widetilde \rmE}_{\rmY\times \rmX}} S(\pi_{\rmY}^*\pi^*(\eta^{\rmV} \oplus 1)))_{|\rmU_i}= \rmU_i \times ((\rmY \times \rmX) \times (\rmT_{\rmV} \wedge \rmT_{\rmW}))$,
where $\rmW$ corresponds to the fibers of the vector bundle $\eta^{\rmV}$. The section 
$\sigma_{|\rmU_i}:{\widetilde \rmE}_{\rmY\times \rmX|\rmU_i} \ra (\rmS(\pi_{\rmY}^*\pi^*(\xi^{\rmV} \oplus 1)) \wedge ^{{\widetilde \rmE}_{\rmY \times \rmX}} \rmS(\pi_{\rmY}^*\pi^*(\eta^{\rmV} \oplus 1)))_{|\rmU_i}$ now
corresponds to the canonical section $\rmU_i \times \rmY \times \rmX \ra \rmU_i \times ((\rmY \times \rmX) \times (\rmT_{\rmV} \wedge \rmT_{\rmW}))$. 
Intermediate to collapsing the section $\sigma$ is to take the pushout of 
\be \begin{equation}
\label{intermed.quot}
{\widetilde {\rmE}}_{\rmY} \leftarrow {\widetilde \rmE}\times _{\rmG} (\rmY \times \rmX) \rightarrow {\rmS(\pi_{\rmY}^*\pi^*(\xi^{\rmV} \oplus 1)) \wedge ^{{\widetilde \rmE}_{\rmY\times \rmX}} S(\pi_{\rmY}^*\pi^*(\eta^{\rmV} \oplus 1)))}.
\end{equation} \ee
Over $\rmU_i$, this corresponds to taking the pushout of $\rmU_i \times \rmY \leftarrow \rmU_i \times (\rmY \times \rmX) \ra \rmU_i \times ((\rmY \times \rmX) \times (\rmT_{\rmV} \wedge \rmT_{\rmW}))$. The resulting pushout then identifies
with $\rmU_i \times \rmY \times (\rmX_+ \wedge (\rmT_{\rmV} \wedge \rmT_{\rmW}))$, which in fact identities with 
$({\widetilde {\rmE}}\times_{\rmG} (\rmY \times (\rmX_+ \wedge \rmT_{\rmV})))\wedge ^{{\widetilde \rmE}_{\rmY}} \rmS(\pi^*(\eta^{\rmV} \oplus1)))_{|\rmU_i}$. 
\vskip .2cm
Observe that collapsing the section $\sigma$ can be done in two stages, by first taking the pushout in ~\eqref{intermed.quot} and then by
collapsing the resulting section from ${\widetilde \rmE}_{\rmY}$. These complete the verification of the observation in Step 4, at least in the case the group-scheme ${\group}$ is special. When ${\group}$ is not special, one  adopts a similar argument using an
\'etale cover $\{\rmU_i|i \in I\}$  of ${\widetilde {\rmB}}$  over which ${\widetilde {\rmE}} \ra 
{\widetilde {\rmB}}$ is trivial.
\vskip .2cm \noindent
{\it Step 5}. Let $s: {\widetilde \rmE}_{\rmY} \ra {\widetilde \rmE}\times_{\rmG} (\rmY \times \rmT_{\rmV})\wedge ^{{\widetilde E}_{\rmY}}\rmS(\pi^*(\eta^{\rmV} \oplus 1))$
denote the canonical section. 
 Then, we proceed to show that 
the sections $s$ and $t$  are  compatible in the
sense that the diagram
\be \begin{equation}
     \label{compat.sections.V}
\xymatrix{{{\widetilde \rmE}_{\rmY}}  \ar@<1ex>[r]^(.2){s}  \ar@<1ex>[dr]_(.3){t} & {{\widetilde {\rmE}}\times_{\rmG} (\rmY \times \rmT_{\rmV}) \wedge^{{\widetilde \rmE}_{\rmY}} S(\pi^*(\eta^{\rmV} \oplus 1))} \ar@<1ex>[d]^{(id\times_{\rmG}tr'^{\group}(\rmf_{\rmY+})(T_{\rmV}))\wedge^{{\widetilde \rmE}_{\rmY}} id}\\
             & {{\widetilde {{\rmE}}}\times_{\rmG} (\rmY \times (\rmX_+ \wedge \rmT_{\rmV}))\wedge^{{\widetilde \rmE}_{\rmY}} S(\pi^*(\eta^{\rmV} \oplus 1)))}}
    \end{equation} \ee
\vskip .2cm \noindent
commutes, that is, in the sense discussed next. Here $(id{\underset {\rmG}\times}tr_{\group}(\rmf_{\rmY})'(T_{\rmV})$ is the component of
the map of spectra $id\times_{\rmG}tr_{\group}(\rmf_{\rmY})'$ indexed by $\rmT_{\rmV}$. 
One may break this map into a sequence of maps 
\be \begin{multline}
     \begin{split}
   \label{zigzag.tr}
\Y_0(\rmT_{\rmV})={\widetilde {\rmE}}\times_{\rmG}(\rmY \times \rmT_{\rmV}) \ra \Y_1(\rmT_{\rmV})={\widetilde {\rmE}}\times_{\rmG} (\rmY \times \X_1(\rmT_{\rmV})) \leftarrow 
	\Y_2(\rmT_{\rmV})={\widetilde {{\rmE}}}\times_{\rmG}(\rmY  \times  \X_2(\rmT_{\rmV}) )\\
	\ra \Y_3(\rmT_{\rmV})={\widetilde {{\rmE}}}\times_{\rmG}(\rmY \times (\rmX_+ \wedge \rmT_{\rmV})), \end{split}
\end{multline} \ee
	where the maps $\{\rmT_{\rmV} \ra \X_1(\rmT_{\rmV}) \leftarrow \X_2(\rmT_{\rmV}) \ra \rmX_+ \wedge \rmT_{\rmV}|\rmV\}$ define the $\rmG$-equivariant pre-transfer considered in 
	 ~\eqref{G.equiv.pretransfer}. Observe that each of the objects in ~\eqref{zigzag.tr}
	 is pointed over ${\widetilde {\rmE}}_{\rmY}$. (When $\rmG$ is non-special, the quotient sheaves in 
	 the diagram ~\eqref{zigzag.tr}, and in the discussion below, are all taken in the \'etale topology on ${\widetilde {\rmE}}_{\rmY}$ 
	 and one will have to replace the diagram in ~\eqref{compat.sections.V} with $\rmR\epsilon_*$ applied to all the terms there.)  One may observe that 
	 the corresponding sections from
	 ${\widetilde \rmE}_{\rmY}$ are all compatible as the group action leaves the  base points of $\rmT_{\rmV},
	 \X_1(\rmT_{\rmV}), \X_2(\rmT_{\rmV})$ and $\rmX_+ \wedge {\rmT_{\rmV}}$ fixed. This results in the following commutative diagram 
	 over ${\widetilde \rmE}_{\rmY}$: 
 \fontsize{8}{12}	 
\be \begin{equation}
\label{zigzag.tr.1}
\xymatrix{{\Y_0(\rmT_{\rmV}) \wedge^{{\widetilde \rmE}_{\rmY}} S(\pi^*(\eta^{\rmV} \oplus 1))} \ar@<1ex>[r]& {\Y_1(\rmT_{\rmV}) \wedge^{{\widetilde \rmE}_{\rmY}} S(\pi^*(\eta^{\rmV} \oplus 1))} & {\Y_2(\rmT_{\rmV}) \wedge^{{\widetilde \rmE}_{\rmY}} S(\pi^*(\eta^{\rmV} \oplus 1))} \ar@<-1ex>[l] \ar@<1ex>[r]& {\Y_3(\rmT_{\rmV}) \wedge^{{\widetilde \rmE}_{\rmY}} S(\pi^*\eta^{\rmV} \oplus 1))} \\
	{{\widetilde \rmE}_{\rmY}} \ar@<1ex>[u]^{y_0=s} \ar@<1ex>[ur]^(.27){y_1}  \ar@<1ex>[urr]^(.27){y_2} \ar@<1ex>[urrr] _(.4){y_3=t}}
\end{equation} \ee
\normalsize
\vskip .1cm \noindent
By the commutativity of the triangle in ~\eqref{compat.sections.V}, we mean the commutativity of all the corresponding triangles that make up the diagram in ~\eqref{zigzag.tr.1} and this is now clear in view of the above observations.
When ${\widetilde {\rmE}}_{\rmY}= \cE_{m, \rmY}= {\widetilde {{\EG}^{\it gm,m}}}\times_{\rmG}\rmY$, one may again observe that 
	 the corresponding sections from
	 $\cE_{m, \rmY}$ are all compatible as the group action leaves the  base points of $\rmT_{\rmV},
	  \wedge\X_1(\rmT_{\rmV}),  \X_2(\rmT_{\rmV})$ and $ \rmX_+ \wedge {\rmT_{\rmV}}$ fixed. This results in a corresponding diagram over each $\cE_{m, \rmY}$
	 and the arguments in Step 2 above show that such commutative triangles are compatible as $m$ varies.
\vskip .2cm	
 Moreover, the commutativity of the diagram
 ~\eqref{zigzag.tr.1} shows that there is an induced map on the quotients by the sections $y_i$, $i=0, 1,2, 3$. Observe that on taking smash product {\it over} ${\widetilde \rmE}_{\rmY}$
 with ${\widetilde {\rmE}}\times_{\rmG} ((\rmY \times \rmX)_+ \wedge \rmT_{\rmW})\wedge^{{\widetilde \rmE}_{\rmY}} S(\pi^*(\eta^{\rmW} \oplus 1)) = ({\widetilde \rmE}_{\rmY} \times \T^ {dim( \rmW)+ rank(\eta^{\rmW})})_+ $ , one obtains a map of the
 	diagram in ~\eqref{zigzag.tr.1} to the corresponding diagram with $\rmV\oplus \rmW$ in the place of $\rmV$. This observation shows that if we 
define spectra $\Z_i$, $i=0, 1,2, 3$ in $\Spt_S$ by
 \be \begin{equation}
   \Z_{i, \rmN_{\rmV}} = (\Y_i(\rmT_{\rmV}) \wedge^{{\widetilde \rmE}_{\rmY}} S(\pi^*(\eta^{\rmV} \oplus 1)))   /y_i({{\widetilde \rmE}_{\rmY}}), \rmN_{\rmV} = dim (\rmV) + rank(\eta^{\rmV})
 \end{equation} \ee
 and if $\rmN_{\rmW} = dim(\rmW) + rank(\eta^{\rmV})$, the smash product
 pairings $\T^{\rmN_{\rmW}} \wedge \Z_{i, \rmN_{\rmV}} \ra \Z_{i, \rmN_{\rmV\oplus \rmW}}$  are compatible with the maps between the $\Z_i$ considered above. (Note that these spectra are indexed by the integers $\{\rmN_{\rmV}| \rmV\}$ and not by all the non-negative integers. However, since $\{\rmN_{\rmV}| \rmV\}$ is cofinal in ${\mathbb N}$, this suffices.)
 One may also observe that
 the wrong-way map $\Z_2 \ra \Z_1$ is a stable equivalence.
 Therefore, collapsing out the sections $y_i$,  $i=0, 3$, then provides the stable map (which in fact is a composition of several maps, with the ones going in the wrong direction being stable weak-equivalences) 
\be \begin{equation}
\label{tr(f)G.2}
\tr(\rmf_{\rmY}): {\Sigma^{\infty}_{\T}} ({\widetilde {{\rmE}}}\times_{\rmG} \rmY)_+ \ra {\Sigma^{\infty}_{\T}} ({\widetilde {{\rmE}}}\times_{\rmG} (\rmY \times \rmX))_+, \quad \tr(\rmf_{\rmY})^m: {\Sigma^{\infty}_{\T}} ({\widetilde {{\EG}^{\it gm,m}}}\times_{\rmG} \rmY)_+ \ra {\Sigma^{\infty}_{\T}} ({\widetilde {{\EG}^{\it gm,m}}}\times_{\rmG} (\rmY \times \rmX))_+
\end{equation} \ee
in case the group $\rmG$ is special, and the following stable map (which in fact is a composition of several maps, with the ones going in the wrong direction being stable weak-equivalences) in case $\rmG$ is not special:
\be \begin{align}
    \label{tr(f)G.3}
 \tr(\rmf_{\rmY}): \rmR\epsilon_*(\epsilon^*{\mbS_{\k}}) \wedge  \rmR\epsilon_*({\widetilde {\rmE}}\times_{\rmG}^{et} \rmY)_+ &\ra \rmR\epsilon_*(\epsilon^*{\mbS_{\k}}) \wedge \rmR\epsilon_*({\widetilde {\rmE}}\times_{\rmG}^{et}(\rmY \times  \rmX))_+, \\
 \tr(\rmf_{\rmY})^m:  \rmR\epsilon_*(\epsilon^*{\mbS_{\k}}) \wedge \rmR\epsilon_*({\widetilde {{\EG}^{\it gm,m}}}\times_{\rmG}^{et} \rmY)_+ &\ra \rmR\epsilon_*(\epsilon^*{\mbS_{\k}}) \wedge \rmR\epsilon_*({\widetilde {{\EG}^{\it gm,m}}}\times_{\rmG}^{et}(\rmY \times  \rmX))_+.  \notag
\end{align}
\ee
\vskip .1cm \noindent
These maps are also compatible as $m$ varies, as observed above and in Step 2. The pairings $\T \wedge \rmR\epsilon_*(\epsilon^*(\T^{\wedge ^n}) \ra \rmR\epsilon_*(\epsilon^*(\T) \wedge \rmR\epsilon_*(\epsilon^*(\T^{\wedge n}) \ra\rmR\epsilon_*(\epsilon^*(\T ^{\wedge ^{n+1}}) $ shows
that $\rmR\epsilon_*(\epsilon^*{\mbS_{\k}})$ is indeed a motivic spectrum. 
\vskip .2cm 
 \begin{definition} (The transfer.) 
\label{transfer:def}
Therefore, taking the colimit over $m \ra \infty$, 
one obtains the following stable transfer map (in $\SH(k)$) on the Borel construction when  $\rmG$  is {\it special}: \index{Becker-Gottlieb transfer}
\be \begin{align}
     \label{tr(f)G.4}
     \tr(\rmf_{\rmY}): {\Sigma^{\infty}_{\T}}  ({\widetilde {\rmE}}\times_{\rmG} \rmY)_+ \ra {\Sigma^{\infty}_{\T}}  ({\widetilde {\rmE}}\times_{\rmG} (\rmY \times \rmX))_+, \\
     \tr(\rmf_{\rmY}): {\Sigma^{\infty}_{\T}}  ({\widetilde {{\EG}^{gm}}}\times_{\rmG} \rmY)_+ \ra {\Sigma^{\infty}_{\T}}  ({\widetilde {{\EG}^{gm}}}\times_{\rmG} (\rmY \times \rmX))_+.  \notag
\end{align} \ee
Next we consider the when the group $\rmG$ is non-special. One may observe from the commutative diagram ~\eqref{zigzag.tr.1}
that all the maps involved in the definition of the transfer maps $\widetilde \tr(\rmf_{\rmY})$ and $\widetilde \tr(\rmf_{\rmY})^m$ in ~\eqref{tr(f)G.3} 
are maps of module spectra over 
the motivic ring spectrum $\rmR\epsilon_*(\epsilon^*{\mbS_{\k}})$. Therefore, we will now define the transfer maps, when $\rmG$ is {\it non-special} to be
\be \begin{align}
     \label{tr(f)G.5}
\tr(\rmf_{\rmY}) = \rmR\epsilon_*(\epsilon^*{\mbS_{\k}}) \wedge  \rmR\epsilon_*({\widetilde {\rmE}}\times_{\rmG}^{et} \rmY)_+ &\ra \rmR\epsilon_*(\epsilon^*{\mbS_{\k}}) \wedge \rmR\epsilon_*({\widetilde {\rmE}}\times_{\rmG}^{et}(\rmY \times  \rmX))_+, \mbox{ and}\\
\tr(\rmf_{\rmY})= \colimm \tr(\rmf_{\rmY})^m: \rmR\epsilon_*(\epsilon^*{\mbS_{\k}}) \wedge \rmR\epsilon_*({\widetilde {{\EG}^{\it gm, m}}}\times_{\rmG}^{et} \rmY)_+ &\ra \rmR\epsilon_*(\epsilon^*{\mbS_{\k}}) \wedge \rmR\epsilon_*({\widetilde {{\EG}^{\it gm,m}}}\times_{\rmG}^{et}(\rmY \times  \rmX))_+.  \notag
\end{align} \ee 
\vskip .1cm \noindent
Henceforth we will let ${\widetilde {{\EG}^{gm}}}\times_{\rmG} \rmY =
\colimm \rmR\epsilon_*{\widetilde {{\EG}^{\it gm,m}}}\times_{\rmG} \rmY$ and ${\widetilde {{\EG}^{gm}}}\times_{\rmG}^{et} (\rmY \times \rmX) =
\colimm \rmR\epsilon_* {\widetilde {{\EG}^{\it gm,m}}}\times_{\rmG}(\rmY \times  \rmX)$.
\vskip .2cm
If $\cE^{\rmG}$ denotes a commutative ${\group}$-equivariant ring spectrum as in  ~\eqref{choice.ring.spectra}, $\cE= i^*(\tilde {\mathbb P}\tilde \rmU(\cE^{\rmG}))$ is the
corresponding ring spectrum in $\Spt$, and $\cE \wedge X_+$ is dualizable in $\SH(\k, \cE)$, 
 the same constructions applied to the ${\group}$-equivariant pre-transfer 
~\eqref{G.equiv.pretransfer} and making use of smashing with the spectrum $\cE$ in the place of smashing with $\mbS$ 
provides us with the transfer map (in $\SH(k, \cE)$):
\be \begin{align}
     \label{tr(f)G.E}
\tr(\rmf_{\rmY})_{\cE}:\cE \wedge  ({\widetilde {{\rmE}}}\times_{\rmG} \rmY)_+ \ra \cE \wedge ({\widetilde {{\rmE}}}\times_{\rmG} (\rmY \times \rmX))_+, \\ 
\tr(\rmf_{\rmY})_{\cE}:\cE \wedge  ({\widetilde {{\EG}^{gm}}}\times_{\rmG} \rmY)_+ \ra \cE \wedge ({\widetilde {{\EG}^{gm}}}\times_{\rmG}(\rmY \times \rmX))_+, &\mbox{ when } \rmG \mbox{ is special, and }\notag\\
\tr(\rmf_{\rmY})_{\cE}: \rmR\epsilon_* \epsilon^*(\cE )\wedge \rmR\epsilon_* ({\widetilde {\rmE}}\times_{\rmG}^{et} \rmY))_+ \ra  \rmR\epsilon_* \epsilon^*(\cE) \wedge \rmR\epsilon_* {\widetilde {\rmE}}\times_{\rmG}^{et} (\rmY \times \rmX))_+,\notag \\
\tr(\rmf_{\rmY})_{\cE}: \rmR\epsilon_* \epsilon^*(\cE) \wedge \rmR\epsilon_* ({\widetilde {{\EG}^{gm}}}\times_{\rmG}^{et} \rmY))_+  \ra  \rmR\epsilon_* \epsilon^*(\cE) \wedge \rmR\epsilon_* ({\widetilde {{\EG}^{gm}}}\times_{\rmG}^{et} (\rmY \times \rmX))_+, &\mbox{ when } \rmG \mbox{ is non-special}. \notag
    \end{align} \ee
\end{definition} \qed
\begin{remark} 
\label{transf.homog.spaces}
Suppose $\rmX = \rmG/\rmH$ for a closed linear algebraic subgroup and $\rmY = Spec\, \k$. Then the identification 
\[\rmR\epsilon_*({\widetilde {{\EG}^{\it gm}_{et}}}\times_{\rmG}^{et} \rmG/\rmH)_+ \simeq \rmR \epsilon_*(\BH_{et}^{\it gm}) \simeq \epsilon_*(\colimm {\widetilde {{\BH}^{\it gm,m}}})\]
shows that in this case the target of the transfer map in ~\eqref{tr(f)G.4}
is ${\Sigma^{\infty}_{\T}}({\widetilde {{\BH}^{\it gm}}})_+$ and the target of the transfer map in ~\eqref{tr(f)G.E} is $\cE\wedge ({\widetilde {\BH}^{gm}})_+$ or $\rmR\epsilon_*(\epsilon^*(\cE)\wedge ({\widetilde {\BH}^{gm}})_+$ depending on  whether  $\rmG$ and $\rmH$ are special or not.
 
\end{remark}
\begin{example}{\rm  The following provides a notable class of examples of such a transfer.
Let $i:\rmH \ra \rmG$ be as above and let $\rmY$ denote a smooth quasi-projective scheme over $\k$ 
with an action by $\rmG$. 
Assume further that $\cE$ is a commutative ring spectrum in $\Spt(\k_{\rm mot})$ and $\ell$ is a 
prime different from $char(\k)$ so that $\cE$ is $Z_{(\ell)}$-local.
\vskip .1cm
 Then, the $\rmG$-scheme $\rmG\times_{\rmH} \rmY$ identifies as a $\rmG$-scheme with $\rmG/\rmH \times \rmY$ (provided with the diagonal action by $\rmG$).
Clearly $\rmG/\rmH$ is dualizable in $\Spt(\k_{\rm mot})$ in case $char (\k)= {\rm 0}$ and $\cE \wedge \rmG/\rmH_+$ is dualizable in $\Spt(\k_{\rm mot}, \cE)$ in case $char (\k) =p> {\rm 0}$.  The corresponding transfer, when both $\rmG$ and $\rmH$ are special, is then
the stable map 
\[\tr(id_{\rmY}):{\rm \Sigma^{\infty}_{\T}} (\EG^{\it gm,m}\times _{\rmG} \rmY)_+ \ra {\Sigma^{\infty}_{\T}} ( \EG^{\it gm,m}\times _{\rmG}(\rmG\times_{\rmH} \rmY))_+ \simeq {\Sigma^{\infty}_{\T}} (\EH^{\it gm,m}\times_{\rmH} \rmY)_+\] 
in the first case and  the map 
\[\tr(id_{\rmY}): \cE \wedge (\EG^{\it gm,m}\times _{\rmG} \rmY)_+ \ra \cE \wedge ( \EG^{\it gm,m}\times _{\rmG}(\rmG\times_{\rmH}\rmY))_+ \simeq \cE \wedge (\EH^{\it gm,m}\times_{\rmH} \rmY)_+\]
in the second case. In case the groups $\rmG$ and $\rmH$  are not special, one obtains corresponding stable transfer
maps involving $\rmR\epsilon_*$ as in \eqref{tr(f)G.3} and ~\eqref{tr(f)G.E}.}
\end{example}

\vskip .2cm

\section{Appendix: Spherical fibrations and Thom-spaces  in the motivic and \'etale setting}
\vskip .1cm
The main {\it goal of this section} is to collect together various basic results on Thom spaces of algebraic vector bundles and relate them to Spanier-Whitehead duality in the
both the motivic and \'etale framework. Throughout the following discussion we will let
$\rmS$ denote a Noetherian affine smooth scheme defined and of finite type over a given perfect field $\k$: we will restrict to  smooth schemes of finite type over $\rmS$. 
\subsection{Basic results on Thom-spaces} \index{Thom space}
We begin with the following basic observation on vector bundles over affine schemes. $\Spc_*(\rmS_{\rm mot})$ will denote the category of pointed simplicial
presheaves on the Nisnevich site of $\rmS$ defined as in  ~\ref{Nis.presh}.
\vskip .2cm
\begin{proposition}
	\label{vect.bundles.over.affines}
	\begin{enumerate}[\rm(i)]
	\item Let $\rmX$ denote any affine scheme. Then any vector bundle ${\mathcal E}$ on $\rmX$ has a complement, i.e. there exists another vector bundle
	${\mathcal E}^{\perp}$ so that ${\mathcal E} \oplus {\mathcal E}^{\perp} $ is a trivial vector bundle.
	\item Assume $\rmX$ is again an affine scheme. Then, if ${\mathcal E}$ and ${\mathcal F}$ are two vector bundles on $\rmX$, then they represent the same
	class in the Grothendieck group $\rmK^0(\rmX)$ if and only if they are stably isomorphic, i.e., isomorphic after the addition of some trivial vector bundles.
	\item Let $\rmX$ denote a quasi-projective scheme, i.e., locally closed in some projective space over an affine base scheme $\rmS$. Then there exists an
	affine scheme $\tilde \rmX$ together with a surjective map $\tilde \rmX \ra \rmX$ so that $\tilde \rmX$ is an affine-space bundle  over $\rmX$. In particular,
	the map $\tilde \rmX \ra \rmX$ is an ${\mathbb A}^1$-equivalence.
	\end{enumerate}
\end{proposition}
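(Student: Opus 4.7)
The plan is to treat the three parts in sequence, each reducing to a standard commutative algebra or Jouanolou-trick argument.

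For (i), pass to $\rmR$-modules via $\rmX = \text{Spec}\,\rmR$: a vector bundle on $\rmX$ corresponds to a finitely generated projective $\rmR$-module $P$, and by the lifting property any surjection $\rmR^n \twoheadrightarrow P$ splits, producing $\rmR^n \cong P \oplus Q$ with $Q$ again finitely generated projective. Taking ${\mathcal E}^{\perp}$ to be the vector bundle attached to $Q$ finishes the argument.

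For (ii), the ``if'' direction is tautological since stably isomorphic bundles manifestly determine the same class in $\rmK^0(\rmX)$. For the converse, one uses the standard description of the group completion: an equality $[{\mathcal E}] = [{\mathcal F}]$ in $\rmK^0(\rmX)$ produces a finitely generated projective bundle ${\mathcal G}$ with ${\mathcal E} \oplus {\mathcal G} \cong {\mathcal F} \oplus {\mathcal G}$. Now invoke (i) to pick a complement ${\mathcal G}^{\perp}$ with ${\mathcal G} \oplus {\mathcal G}^{\perp}$ trivial; summing with ${\mathcal G}^{\perp}$ on both sides replaces ${\mathcal G}$ by a trivial bundle, which is exactly the required stable isomorphism.

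For (iii), the plan is Jouanolou's trick. First handle the model case $\mathbb{P}^n_{\rmS}$: consider the scheme $\widetilde{\mathbb{P}^n_{\rmS}}$ parametrizing $(n+1) \times (n+1)$ matrices $M$ with $M^2 = M$ and $\text{tr}(M) = 1$. This is closed in the affine space of matrices, hence affine, and the morphism $\widetilde{\mathbb{P}^n_{\rmS}} \to \mathbb{P}^n_{\rmS}$ sending $M$ to its image line realizes it as a torsor under a vector bundle, whose fiber over $[v]$ consists of projections of the ambient free module onto the line $kv$. For a general quasi-projective $\rmX$, embed it (locally closed) in some $\mathbb{P}^n_{\rmS}$ and set $\tilde \rmX := \rmX \times_{\mathbb{P}^n_{\rmS}} \widetilde{\mathbb{P}^n_{\rmS}}$, which is automatically an affine-space bundle over $\rmX$ and hence an ${\mathbb A}^1$-equivalence.

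The main obstacle is ensuring $\tilde \rmX$ itself is affine. For $\rmX$ projective over $\rmS$ this is immediate, since $\tilde \rmX$ is then a closed subscheme of the affine $\widetilde{\mathbb{P}^n_{\rmS}}$; in general one has to run Jouanolou's original argument (\cite{Joun}): write $\rmX = \rmZ \setminus \rmZ'$ with $\rmZ' \subset \rmZ$ closed in $\mathbb{P}^n_{\rmS}$, use the torsor structure to replace the complement by a principal open inside an affine scheme, and appeal to Serre's affineness criterion. The upshot is that $\tilde \rmX$ is both affine and an affine-space bundle over $\rmX$, giving the claimed ${\mathbb A}^1$-equivalence.
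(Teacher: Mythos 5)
Your proof is correct and follows the same route the paper takes, which simply cites the standard facts: (i) is the Serre–Swan correspondence plus the fact that a finitely generated projective module is a direct summand of a free one, (ii) is the usual description of the group completion (the paper cites \cite[Lemma 2.9]{Voev}), and (iii) is Jouanolou's device (\cite[Lemme 1.5]{Joun}), including the genuine subtlety you correctly flag that the naive pullback of the idempotent-matrix torsor to a merely locally closed $\rmX$ is only quasi-affine and needs an extra step. Your sketch just fleshes out the same references the paper relies on.
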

\begin{proof} (i) is clear from the fact that vector bundles on affine schemes correspond to projective modules over the corresponding
	coordinate ring. (ii) is discussed in \cite[Lemma 2.9]{Voev}.  (iii) is the construction discussed in \cite[Lemme 1.5]{Joun} and often referred
	to as the Jouanolou trick.
\end{proof}
We will next summarize some well-known facts about Thom-spaces in $\Spc_*({\rmS}_{\rm mot})$.
If $\alpha$ is a vector bundle over a smooth scheme $\rmX$ over the (base) scheme $\rmS$, then one needs to define the Thom-space of $\alpha$ to be the 
following canonical homotopy  pushout:
\be \begin{equation}
\label{Thom.space.over.S}
\xymatrix{{\rm{\rm E}(\alpha)-\rmX} \ar@<1ex>[d]\ar@<1ex>[r] & {\rmE(\alpha)} \ar@<1ex>[d]\\
	{\rmS} \ar@<1ex>[r] & {{\rm Th}(\alpha)}}
\end{equation} \ee
where $\rmE(\alpha)$ denotes the total space of the vector bundle $\alpha$. Since $\rmE(\alpha)$ and $\rmE(\alpha)-\rmX$ map to $\rmX$ and then to $\rmS$, ${\rm Th}(\alpha)$ maps to $\rmS$. The map $\rmS \ra {\rm Th}(\alpha)$ provides a section to the induced map ${{\rm Th}(\alpha)} \ra \rmS$, so that ${\rm Th}(\alpha)$ is pointed over $\rmS$ and hence is an object in $\Spc_*({\rmS}_{\rm mot})$.  
(We may often assume that injective maps are cofibrations, in which case the map in the top row is a cofibration, and therefore, it suffices to
take the ordinary pushout, in the place of the homotopy pushout.) 
\vskip .1cm
When we view $\rmE(\alpha)$ and $\rmE(\alpha)-\rmX$ as sheaves on the big \'etale site, the corresponding pushout of sheaves on the big \'etale site of $\rmS$ will be denoted
${\rm Th}(\alpha)_{et}$. 
\vskip .1cm
\begin{proposition}
	\label{props.Thom.spaces} Let $\alpha$ denote a vector bundle over the scheme $\rmX$.
	\begin{enumerate}[\rm(i)]
	\item Viewing $\rmP(\alpha \oplus \epsilon^1) ={\rm {Proj}}_{\rmX}(\alpha \oplus \epsilon^1)$ and $ \rmP(\alpha) ={\rm {Proj}}_{\rmX}(\alpha)$ as simplicial presheaves over the base scheme $\rmS$ and taking the quotient presheaf, 
	$\rmP(\alpha \oplus \epsilon^1) /\rmP(\alpha) \simeq {\rm Th}(\alpha)$ where $\rmP(\beta)$ denotes the projective space bundle associated to a vector bundle
	$\beta$ and $\epsilon^1$ denotes a trivial bundle of rank $1$. Viewing $\rmP(\alpha \oplus \epsilon^1)$ and $\rmP(\alpha)$ as simplicial presheaves over 
	$\rmX$ and taking the quotient presheaf over $\rmX$, 
	$\rmP(\alpha \oplus \epsilon^1) /_{\rmX}\rmP(\alpha) \simeq S(\alpha \oplus \epsilon^1)$, a {\it sphere bundle over $\rmX$}. (This may be called the ``{\it one-point compactification
		of the vector bundle $\alpha$}''.)  The obvious projection $\rmS(\alpha \oplus \epsilon^1) \ra X$ has a section s that sends a point in $\rmX$ to the 
	point at $\infty$ in the fiber over that point. Now $\rmS(\alpha \oplus \epsilon ^1)/s(\rmX) \cong {\rm Th}(\alpha)$.
	\item If $ \rmX \ra \rmY$ is a closed immersion of smooth schemes with $\cN$ denoting the corresponding normal bundle, then ${\rm Th}(\cN) \simeq \rmX/\rmX-\rmY$.
	\item Let $g:\rmS' \ra \rmS$ denote a map of smooth schemes and let $g^*(\alpha)$ denote the induced vector bundle on $\rmX'= \rmX{\underset {\rmS} \times}\rmS'$. Then $g$ induces a map ${\rm Th}(g^*(\alpha)) \ra {\rm Th}(\alpha)$
	compatible with the given map $g:\rmS' \ra \rmS$. Moreover, the induced map ${\rm Th}(g^*(\alpha)) \ra {\rm Th}(\alpha)$ is natural in $g$ and $\alpha$.
	\end{enumerate}
	\end{proposition}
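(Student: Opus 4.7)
The plan is to handle (i) and (iii) by direct constructions using the pushout definition of the Thom space and standard facts about projective bundles, and to handle (ii) by invoking the Morel--Voevodsky homotopy purity theorem (\cite[Theorem 3.2.23]{MV}), whose proof via deformation to the normal cone will be the main technical input.

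For (i), the starting observation is the classical fact that $\rmP(\alpha\oplus \epsilon^1)$ admits a Zariski-open decomposition into $\rmP(\alpha)$ (the hyperplane at infinity) and its complement, and this complement is canonically isomorphic as an $\rmX$-scheme to the total space $\rmE(\alpha)$ via the map sending a vector $v\in\alpha_x$ to the line through $(v,1)\in(\alpha\oplus\epsilon^1)_x$. First I would produce the commutative diagram of simplicial presheaves over $\rmS$
\[
\xymatrix{\rmE(\alpha)-\rmX \ar@<1ex>[r] \ar@<1ex>[d] & \rmE(\alpha)\ar@<1ex>[d]\ar@<1ex>[r]^(.4){\cong} & \rmP(\alpha\oplus\epsilon^1)-\rmP(\alpha)\ar@<1ex>[d]\\
\rmS\ar@<1ex>[r] & {\rm Th}(\alpha) \ar@<1ex>[r] & \rmP(\alpha\oplus\epsilon^1)/\rmP(\alpha),}
\]
and identify the outer square as a homotopy pushout using that the top-right map is an open immersion with complement $\rmP(\alpha)$, while $\rmE(\alpha)$ deformation retracts in the $\A^1$-sense onto $\rmX$. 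The identification with the sphere bundle description is obtained by redoing the same pushout in the slice category $\PSh/\rmX$: the quotient over $\rmX$ produces $\rmS(\alpha\oplus \epsilon^1)$ fiberwise (a family of one-point compactifications), and the canonical section $s$ is exactly the one induced by the summand $\epsilon^1$. Collapsing $s(\rmX)$ is the same as collapsing the zero section after swapping the roles of $\rmP(\alpha)$ and $\rmP(\epsilon^1)$ inside $\rmP(\alpha\oplus\epsilon^1)$, which recovers ${\rm Th}(\alpha)$ by a second application of the open/closed pushout.

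For (ii), I would invoke the homotopy purity theorem of Morel--Voevodsky: for a closed immersion $i:\rmX\hookrightarrow\rmY$ of smooth $\rmS$-schemes with normal bundle $\cN$, the deformation to the normal cone supplies a diagram of smooth schemes over $\A^1_{\rmS}$ whose generic fiber realizes $\rmY/(\rmY-\rmX)$ and whose special fiber realizes $\rmE(\cN)/(\rmE(\cN)-\rmX)={\rm Th}(\cN)$, and $\A^1$-invariance identifies the two restrictions in $\HSpt_{mot}$. (Note the typo in the statement: the correct assertion is ${\rm Th}(\cN)\simeq \rmY/(\rmY-\rmX)$.) The main obstacle here is purely one of citation rather than reproof: the full details of the deformation to the normal cone and the verification of $\A^1$-local triviality are precisely what is established in \cite[\S 3, Theorem 2.23]{MV}, and the same argument, applied site-theoretically, carries over to the big \'etale site to give the analogous statement for ${\rm Th}(\cN)_{et}$.

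For (iii), I would use that the homotopy pushout defining ${\rm Th}(\alpha)$ is a functor in the pair $(\rmE(\alpha)-\rmX, \rmE(\alpha))$ of simplicial presheaves over $\rmS$, together with the fact that pullback of vector bundles commutes with taking total spaces and complements of the zero section. Given $g:\rmS'\to\rmS$, the canonical maps $\rmE(g^*\alpha)\to\rmE(\alpha)$ and $\rmE(g^*\alpha)-\rmX'\to\rmE(\alpha)-\rmX$ covering $g$ assemble into a map of pushout squares in $\PSh$, hence a map ${\rm Th}(g^*\alpha)\to {\rm Th}(\alpha)$. Naturality in $g$ is immediate from the universal property of the pushout; naturality in $\alpha$ follows in the same way, applied to a map of vector bundles rather than a base-change. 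No serious obstacles arise here; the only point worth checking is that the pushout squares used to define the Thom spaces can be chosen functorially, which is automatic if one works with the injective model structure as in~\ref{model.struct.0} so that every object is cofibrant.
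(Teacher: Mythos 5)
Your overall strategy matches the paper's: part (i) is the standard identification of ${\rm Th}(\alpha)$ with $\rmP(\alpha\oplus\epsilon^1)/\rmP(\alpha)$ (the paper simply points to the pushout square ~\eqref{S.alpha.plus.1}), part (ii) is the Morel--Voevodsky homotopy purity theorem (the paper cites \cite{MV}), and part (iii) is functoriality of the defining pushout (the paper says exactly this). You also correctly flag the typo in (ii): the statement should read ${\rm Th}(\cN)\simeq \rmY/(\rmY-\rmX)$.

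However, the argument you sketch for (i) has a genuine gap. The ``outer square'' in your diagram does not commute, so there is no natural map ${\rm Th}(\alpha)\to\rmP(\alpha\oplus\epsilon^1)/\rmP(\alpha)$ produced by the universal property: to get such a map one would need the composite $\rmE(\alpha)-\rmX\hookrightarrow\rmE(\alpha)\cong\rmP(\alpha\oplus\epsilon^1)-\rmP(\alpha)\hookrightarrow\rmP(\alpha\oplus\epsilon^1)\to\rmP(\alpha\oplus\epsilon^1)/\rmP(\alpha)$ to factor through the basepoint, but $\rmE(\alpha)-\rmX$ is disjoint from $\rmP(\alpha)$, so it is not collapsed. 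Similarly, the observation that $\rmE(\alpha)$ retracts onto $\rmX$ is true but is not what the argument needs. The correct comparison is the standard two-step argument of \cite[Prop.~3.2.17]{MV}: first, the closed immersion $\rmP(\alpha)\hookrightarrow\rmP(\alpha\oplus\epsilon^1)-\rmX$ is the zero section of a line bundle and hence an $\A^1$-equivalence, which gives $\rmP(\alpha\oplus\epsilon^1)/\rmP(\alpha)\simeq\rmP(\alpha\oplus\epsilon^1)/(\rmP(\alpha\oplus\epsilon^1)-\rmX)$; second, Nisnevich excision applied to the open subscheme $\rmE(\alpha)\subseteq\rmP(\alpha\oplus\epsilon^1)$ containing $\rmX$ gives $\rmE(\alpha)/(\rmE(\alpha)-\rmX)\simeq\rmP(\alpha\oplus\epsilon^1)/(\rmP(\alpha\oplus\epsilon^1)-\rmX)$. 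Combining these two equivalences (one running through a zigzag, not a single pushout map) yields ${\rm Th}(\alpha)\simeq\rmP(\alpha\oplus\epsilon^1)/\rmP(\alpha)$. The fiberwise version giving $\rmS(\alpha\oplus\epsilon^1)$ over $\rmX$ is then obtained by the same argument run in $\PSh/\rmX$, which is the content of ~\eqref{S.alpha.plus.1} in the paper.
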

\begin{proof} We skip the proof as the above statements are rather well-known. 
\end{proof}
\begin{notation}
	When we view $\rmP(\alpha)$ and $\rmP(\alpha \oplus \epsilon^1)$ as sheaves on the big \'etale site of $\rmX$ the corresponding
	quotient $\rmP(\alpha \oplus \epsilon^1)_{et}/_{\rmX}\rmP(\alpha)_{et}$ will be denoted $\rmS(\alpha \oplus 1)_{et}$.
\end{notation}
\subsection{The fiber-wise join of simplicial presheaves (spectra) fibering over another simplicial presheaf (spectrum)}
\label{AppA.fiberwise.join}
Given maps of simplicial presheaves $\rmY \ra \rmX $ and $\rmZ \ra \rmX$, the {\it fiber-wise join} $\rmY*_{\rmX}\rmZ$ is the simplicial presheaf defined as the (canonical) homotopy pushout
\be \begin{equation}
\label{fib.join}
\xymatrix{{\rmY{\underset {\rmX} \times} \rmZ} \ar@<1ex>[r] \ar@<1ex>[d] & \rmZ \ar@<1ex>[d]\\
	\rmY \ar@<1ex>[r] & {\rmY*_{\rmX}\rmZ}}
\end{equation} \ee
One may readily verify that the above construction extends readily to spectra.
We elaborate a bit on the above construction and its application to Thom-spaces. First we show that the fiber-wise join indeed does
what it is supposed to do.
\begin{lemma}
	\label{AppA.fiberwise.join.1}
	Assume $\rmX$, $\rmY$ and $\rmZ$ are simplicial presheaves as above. Then: 
	\begin{enumerate}[\rm(i)]
	\item there is an induced map $\rmY*_{\rmX}\rmZ \ra \rmX$. 
	\item If $\rmY_x, \rmZ_x$ denote the fibers over ${\rm x} \in \rmX$, $(\rmY*_{\rmX}\rmZ)_x \simeq {\rm S}^1 \wedge (\rmY_x \wedge \rmZ_x)$.
	\item Therefore, if $\rmX$ denotes the simplicial presheaf represented by a smooth scheme, and $\rmY$, $\rmZ$ are pointed simplicial presheaves over $\rmX$, $\rmY*_{\rmX}\rmZ \simeq ({\rm S}^1\times \rmX)\wedge^{\rmX} (\rmY\wedge ^{\rmX} \rmZ) \simeq 
	\rmY \wedge ^{\rmX} ((\rmS^1 \times \rmX) \wedge ^{\rmX} Z)$.
	\end{enumerate}
\end{lemma}
\begin{proof} This is a well-known result. See for example, \cite[Lemma 2.1]{CS}. 
\end{proof}
\vskip .2cm
If $\beta$ is a vector
bundle over the smooth scheme $\rmX$, we let ${\overset o \beta} \ra X$ denote the associated bundle $\beta -0 \ra X$. Now with 
$\obeta_+ = \obeta \sqcup X$, one obtains $(\rmS^1 \times \rmX) \wedge^{\rmX} \obeta_+ \simeq (\beta {\underset  {\obeta} {\sqcup} } X ) \cong \rmS(\beta \oplus 1)$.
The last $\cong$ is an isomorphism as simplicial presheaves over $\rmX$ while the $\simeq$ is a weak-equivalence
of such simplicial presheaves. The last isomorphism may be seen by working locally on $\rmX$, so that $\beta$
is trivial. The $\simeq$ follows from the observation that the fibers of $\beta $ are acyclic so that
$(\rmS^1 \times \rmX) \wedge^{\rmX} \obeta_+ \simeq ( \beta {\underset {\obeta}{\sqcup}} \rmX)$ as simplicial presheaves over $\rmX$.

\begin{lemma}
	\label{AppA.fiberwise.join.2}
	Let $\alpha$ and $\beta$ denote two vector bundles over the scheme $\rmX$. Then we obtain the identifications:
	\begin{enumerate}[\rm(i)]
	\item $\rmS(\alpha \oplus \epsilon^1) *_{\rmX} {\overset o \beta}_+ \simeq \rmS(\alpha \oplus \epsilon^1) \wedge^{\rmX} S(\beta \oplus \epsilon^1) \simeq \rmS(\alpha \oplus \beta \oplus \epsilon^1)$ where ${\overset o \beta}$ denotes $\beta -0 \ra X$, the
	associated sphere bundle.
	\item The map $\rmS(\alpha \oplus \beta \oplus \epsilon^1) \ra X$ has a section $s$ sending each point of $\rmX$ to the point at $\infty$ in the
	fiber over that point. Then the quotient $\rmS(\alpha \oplus \beta \oplus \epsilon^1)/s(\rmX) = {\rm Th}(\alpha \oplus \beta)$, which is the 
	Thom-space of $\alpha \oplus \beta$.
	\end{enumerate}
\end{lemma}
\begin{proof} Since (ii) is rather straightforward, we will discuss only (i).
	In view of the weak-equivalences above between $(\rmS^1 \times \rmX) \wedge^{\rmX} \obeta_+$ and $\rmS(\beta \oplus \epsilon^1)$, and the 
	observation that $\wedge ^{\rmX}$ is a homotopy pushout of simplicial presheaves over $\rmX$ (in the injective model
	structure), it follows that
	$\rmS(\alpha \oplus \epsilon^1) *_{\rmX} \obeta_+ \simeq S(\alpha \oplus \epsilon^1) \wedge^{\rmX} ((S^1 \times \rmX) \wedge^{\rmX} \obeta_+) \simeq
	\rmS(\alpha \oplus \epsilon^1) \wedge ^{\rmX} \rmS(\beta \oplus \epsilon^1)$. Since the last fibers over $\rmX$, one may work locally on
	$\rmX$ and show readily that it identifies with $\rmS(\alpha \oplus \beta \oplus \epsilon^1)$.
\end{proof}
\begin{lemma}
	\label{rel.smash} 
	Let $\rmB = Spec\, \k$ denote the base field.  Let $\group$ denote a linear algebraic group defined over $\rmB$ and acting on the simplicial presheaves $\rmE$ and $\rmX$ over the base scheme $\rmB$. Assume that
	$\rmE$ is in fact a smooth scheme of finite type over $\rmB$ so that the (geometric) quotient $\rmE/G$ exists and is in fact a scheme of finite type over $\rmB$. Let $\rmP$ denote
	a pointed simplicial presheaf in $\Spc_*(\rmB)$ together with a $\group$-action that leaves the base point of $\rmP$ fixed. Then
	$\rmE\times_{\rmG}(\rmP \wedge \rmX_+) \cong \rmP_{\rmE/\rmG}\wedge^{\rmE/\rmG} (\rmE \times _{\rmG} \rmX_+)$, where $\rmP_{\rm E/\rmG} = \rmP\times_{\rmG}\rmE$.
\end{lemma}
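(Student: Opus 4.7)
The plan is to unpack both sides to the Cartesian product $\rmE \times_{\rmG}(\rmP \times \rmX)$ and reduce the whole statement to a single product-decomposition identity of Beck--Chevalley type, after which everything is bookkeeping of basepoints. First I would rewrite the smash product on the left as a quotient: since $\rmX_+ = \rmX \sqcup \{*\}$ with $\rmG$ acting trivially on the added basepoint and fixing $*_\rmP$, there is a $\rmG$-equivariant identification $\rmP \wedge \rmX_+ \cong (\rmP \times \rmX)/(\{*_\rmP\} \times \rmX)$, in which the denominator is a $\rmG$-invariant subsheaf. Since $\rmE \times_{\rmG}(-)$ is a colimit and thus preserves quotients by $\rmG$-invariant subsheaves,
\[
\rmE \times_{\rmG}(\rmP \wedge \rmX_+) \;\cong\; \bigl(\rmE \times_{\rmG}(\rmP \times \rmX)\bigr)\big/\bigl(\rmE \times_{\rmG}(\{*_\rmP\} \times \rmX)\bigr),
\]
and the denominator identifies with $\rmE \times_{\rmG}\rmX$ because $\{*_\rmP\}$ has trivial $\rmG$-action.

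The key step will then be to establish the isomorphism
\[
\rmE \times_{\rmG}(\rmP \times \rmX) \;\cong\; (\rmE \times_{\rmG} \rmP) \times_{\rmE/\rmG} (\rmE \times_{\rmG} \rmX).
\]
There is an evident $\rmG$-equivariant map $\rmE \times \rmP \times \rmX \to (\rmE \times_{\rmG}\rmP)\times_{\rmE/\rmG}(\rmE \times_{\rmG}\rmX)$ sending $(e,p,x)\mapsto ([e,p],[e,x])$, which descends through the diagonal $\rmG$-action. To show the descended map is an isomorphism I would argue locally on $\rmE/\rmG$: the hypothesis that the geometric quotient $\rmE/\rmG$ exists as a scheme of finite type, together with freeness of the action, makes $\rmE \to \rmE/\rmG$ into a $\rmG$-torsor (Zariski locally if $\rmG$ is special, \'etale locally in general), and on any cover $\{U_i \to \rmE/\rmG\}$ that trivialises the torsor one has $(\rmE \times_{\rmG} W)_{|U_i} \cong U_i \times W$ for every simplicial presheaf $W$ with $\rmG$-action. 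Under these trivialisations the map reduces to the tautology $U_i \times \rmP \times \rmX \cong (U_i \times \rmP) \times_{U_i}(U_i \times \rmX)$, and these local isomorphisms glue via the torsor cocycle by functoriality of the Borel construction in $\rmG$-equivariant maps.

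Finally I would match the quotients. Under the isomorphism just established, the subsheaf $\rmE \times_{\rmG}(\{*_\rmP\} \times \rmX)$ corresponds to $s_{\rmP_{\rmE/\rmG}}(\rmE/\rmG) \times_{\rmE/\rmG}(\rmE \times_{\rmG}\rmX)$, where $s_{\rmP_{\rmE/\rmG}}\colon \rmE/\rmG \to \rmP_{\rmE/\rmG}$ is the section induced by $*_\rmP$. On the right-hand side, $\rmE \times_{\rmG}\rmX_+ \cong (\rmE \times_{\rmG}\rmX) \sqcup \rmE/\rmG$ in $\PSh/(\rmE/\rmG)$, with the $\rmE/\rmG$ summand serving as the fibrewise section $s_Q$. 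Expanding $\rmP_{\rmE/\rmG} \wedge^{\rmE/\rmG}(\rmE \times_{\rmG}\rmX_+)$ by the definition in~\eqref{wedgeS}, the piece $\rmP_{\rmE/\rmG}\times_{\rmE/\rmG} s_Q(\rmE/\rmG) = \rmP_{\rmE/\rmG}$ collapses the $\rmE/\rmG$-summand of $\rmE \times_{\rmG}\rmX_+$ to the basepoint, and what remains in the quotient is precisely
\[
\bigl(\rmP_{\rmE/\rmG} \times_{\rmE/\rmG}(\rmE \times_{\rmG}\rmX)\bigr)\big/\bigl(s_{\rmP_{\rmE/\rmG}}(\rmE/\rmG)\times_{\rmE/\rmG}(\rmE \times_{\rmG}\rmX)\bigr),
\]
agreeing with the presentation of the left-hand side from the first paragraph. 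The hard part is the middle step: justifying the product-decomposition as a genuine isomorphism of simplicial presheaves (not merely after sheafification, and not merely a weak equivalence) is exactly what the torsor hypothesis is there to supply, and one must be careful that the local trivialisations glue on the nose because the functor $\rmE \times_{\rmG}(-)$ sends $\rmG$-equivariant isomorphisms to isomorphisms over $\rmE/\rmG$.
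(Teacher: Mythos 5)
Your proof is correct, and it supplies exactly the verification the paper declines to write out (the paper's ``proof'' of Lemma~\ref{rel.smash} is the single sentence ``The proof is skipped as one may readily verify the above conclusions''). Your strategy --- rewrite $\rmP \wedge \rmX_+$ as the fibrewise quotient $(\rmP\times\rmX)/(\{*_\rmP\}\times\rmX)$, use that $\rmE\times_{\rmG}(-)$ is a colimit (presheaf coequalizer followed by sheafification) so preserves the pushout, reduce to the product decomposition $\rmE\times_{\rmG}(\rmP\times\rmX)\cong(\rmE\times_{\rmG}\rmP)\times_{\rmE/\rmG}(\rmE\times_{\rmG}\rmX)$ established by trivialising the torsor locally, and then match the collapsed subobjects under that isomorphism with the two ingredients of the relative smash $\wedge^{\rmE/\rmG}$ from~\eqref{wedgeS} --- is the natural one and lines up with the way the paper handles similar identifications in Step 4 of the transfer construction. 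Two small points you should be explicit about if you write this up: first, the isomorphism $\rmE\times_{\rmG}\rmX_+\cong(\rmE\times_{\rmG}\rmX)\sqcup\rmE/\rmG$ uses that $\rmE\times_{\rmG}(-)$ preserves coproducts as well as quotients, and the resulting matching of sections $\rms_Q$ and $\rms_{\rmP_{\rmE/\rmG}}$ is what makes both the ``$\rms_{\rmP}\times_{\rmS}\rmQ$'' and the ``$\rmP\times_{\rmS}\rms_{\rmQ}$'' pieces of the pushout~\eqref{wedgeS} come out correctly; second, when $\rmG$ is not special the paper's Borel construction is really $\rmR\epsilon_*(\rmE\times_{\rmG}^{et}(-))$ as in~\eqref{et.case.Borel.const.1}--\eqref{et.case.Borel.const.2}, so the ``glue on the nose'' step must be carried out at the level of \'etale sheaves before applying $\rmR\epsilon_*$ (compare Lemma~\ref{et.case}, which handles an analogous compatibility). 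Neither point changes the argument.
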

\begin{proof} The proof is skipped as one may readily verify the above conclusions.
\end{proof}
\subsection{Motivic Atiyah duality} \index{Atiyah duality}
\label{motivic.Atiyah}
The rest of this section will be devoted to summarizing  a version of Atiyah-duality (see \cite{At}) that applies to the motivic and also the \'etale context: accordingly, we will assume 
 that for any smooth projective 
scheme $\rmX$ over a perfect field $\k$, there exists a vector bundle over the scheme $\rmX$ (which we call the {\it virtual normal bundle})
so that the $\T$-suspension spectrum of its Thom-space is a Spanier-Whitehead dual of the  suspension spectrum  ${\Sigma^{\infty}_{\T}}\rmX_+$  in the category $\Spt({\k}_{\rm mot})$.
 The idea of the proof may be summarized as follows: the Voevodsky collapse considered in Definition ~\ref{V.collapse} (see below), provides a 
 co-evaluation map $c:\T^{\rm n} \ra \rm\rmX_+ \wedge {\rm Th}(\nu_{\rmX})$. Here $\nu_{\rmX}$ is the {\it virtual normal bundle} considered 
 in Definition ~\ref{V.collapse}. One defines an evaluation map dual to this and with these, one shows that ${\rm Th}(\nu_{\rmX})$ is
 in fact a Spanier-Whitehead dual of $\Sigma_{\T}\rmX_+$ modulo certain shifts.
\vskip .2cm
Under the assumption that the base scheme $\rmS =\k$ is a perfect field satisfying the finiteness hypothesis as in ~\eqref{etale.finiteness.hyps}, we may readily 
observe that the pullback functors
$\epsilon^*:\Spt(\k_{\rm mot}) \ra \Spt(\k_{et}),  \bar \epsilon^*:\Spt(\bar \k_{\rm mot}) \ra \Spt(\bar \k_{et}), \mbox{ and } \eta^*: \Spt(\k_{et}) \ra \Spt(\bar \k_{et})$ considered in 
 ~\eqref{maps.topoi.2} as well as  the functors $\theta$ and $\phi _{\cE}$ (discussed in the paragraph below
 ~\eqref{maps.topoi.2}) send suspension spectra of the Thom-spaces in the framework of the source,  to suspension spectra of the corresponding Thom-spaces in the  framework of the target, are compatible
with the smash-products and internal Homs in these categories and also
send maps that are homotopic to the identity to maps that are homotopic to the identity. Therefore, the discussion below carries over from
the framework of $\Spt(\k_{\rm mot})$ to all of the other frameworks (at least after inverting ${\mathbb A}^1$ in all these frameworks). Thus,
the construction of a Spanier-Whitehead dual from the Thom-space of a vector bundle worked out below in the motivic framework 
carries over to the \'etale setting after smashing with an $\ell$-complete spectrum, $\ell$ being prime to the characteristic.
\vskip .2cm
Over  algebraically closed fields of arbitrary characteristic, there is already a different construction
valid in the \'etale setting and making strong of use of \'etale tubular neighborhoods: see 
\cite{J86} and \cite{J87}.
\vskip .2cm
\begin{definition} (The diagonal map.)
 \label{Delta}
Next we consider the following {\it diagonal map}. Let $ \alpha, \beta$ denote two vector bundles on the scheme $\rmX$. Then there is a diagonal map
${\rm Th}(\alpha \oplus \beta) \ra {\rm Th}(\alpha) \wedge {\rm Th}(\beta)$. This map is induced by the map ${\rm E}(\alpha \oplus \beta) \ra {\rm E}(\alpha) \times {\rm E}(\beta)$
lying over the diagonal map $\rmX \ra X \times X$. In this case, one may verify that $\rmE(\alpha \oplus \beta)-\{0\}$ maps
to  $ (\rmE(\alpha)-\{0\}) \times \rmE(\beta) \cup \rmE(\alpha) \times (\rmE(\beta)-\{0\})$.  Taking $\alpha$ to be a zero-dimensional bundle, one obtains the diagonal map
\be \begin{equation}
   \label{diag.map.0}
\Delta: {\rm Th}(\beta) \ra  \rmX_+ \wedge {\rm Th}(\beta) .
\end{equation} \ee
\vskip .2cm
One may interpret the above diagonal map in terms of the associated disk and sphere bundles as follows:
\vskip .2cm
$\Delta':{\rm Th}(\beta) = \rmP(\beta \oplus \epsilon ^1) /\rmP(\beta) \ra \rmP(\beta \oplus \epsilon^1)_+ \wedge \rmP(\beta \oplus \epsilon ^1)/\rmP(\beta)  
= (\rmP(\beta \oplus \epsilon^1) \times \rmP(\beta \oplus \epsilon ^1))/\rmP(\beta \oplus \epsilon^1) \times \rmP(\beta )$
\vskip .2cm \noindent
Now one composes with the projection $\rmP(\beta \oplus \epsilon ^1) \ra \rmX$ to define the diagonal map in ~\eqref{diag.map.0}.
\end{definition}

\subsection{Basic framework: the projective case}
\label{projective.case}
Assume next that $\rmX$ and $\rmY$ are smooth projective schemes with $\rmX$ provided with a closed immersion into $\rmY$ over $\k$. 
$\rmY$ will usually denote a projective space over $\k$, but we denote it by $\rmY$ for simplicity of notation.
Let $\tau_{\rmX}$ ($\tau_Y$, $\cN$) denote the tangent bundle to $\rmX$ (the tangent bundle to $\rmY$ and the normal bundle associated to the imbedding of $\rmX$ in $\rmY$, \resp).
Then one 
 obtains the short exact sequence 
\be \begin{equation}
     \label{tangent.normal}
0 \ra \tau_{\rmX} \ra \tau_{\rmY|\rmX} \ra \cN \ra 0.
\end{equation} \ee
 Let $\pi_{\rmY}: \tilde \rmY \ra \rmY$ denote the affine replacement provided by Jouanolou's construction. 
 Let $\tilde \rmX = \rmX \times_{\rmY}\tilde \rmY$ and let  $\pi_{\rmX}: \tilde \rmX \ra \rmX$ denote the induced map. Then the following are proven in
\cite[Proposition 2.7 through Theorem 2.11]{Voev}:
 \begin{enumerate}
     \item There exists a vector bundle $\rmV$ on $\rmY$ so that $\pi_{\rmY}^*(\rmV) \oplus \pi_{\rmY}^*(\tau_{\rmY})$ is stably isomorphic to a trivial vector bundle.
So we will assume that $\pi_{\rmY}^*(\rmV) \oplus \pi_{\rmY}^*(\tau_{\rmY}) \oplus \epsilon^m \cong \epsilon^{n}$ for some ${\rm m}$ and ${\rm n}$. We will replace $\rmV$ by $\rmV \oplus 
\epsilon ^{\rm m}$ so that $\pi_{\rmY}^*(\rmV) \oplus \pi_{\rmY}^*(\tau_{\rmY})$ is the trivial bundle $\epsilon^{\rm n}$.
\item There exists a collapse map $\rmV:\T^{n} \ra {\rm Th}(\rmV)$. See \cite[Lemma 2.10 and Theorem 2.11]{Voev}.
    \end{enumerate}
\vskip .1cm
One may observe that  $\pi_{\rmX}^*(\cN \oplus \rmV_{|X}) \oplus \pi_{\rmX}^*(\tau_{\rmX})$ is also stably trivial. If 
$\pi_{\rmX}^*(\cN \oplus \rmV_{|X}) \oplus \pi_{\rmX}^*(\tau_{\rmX}) \oplus \epsilon^m \cong \epsilon^{\rm N}$ for some $m$ and ${\rm N}$, we will replace $\rmV$ by $\rmV \oplus \epsilon^{\rm m}$ and
we will make the following definition.
\begin{definition} 
	\label{V.collapse}
	({\it Virtual normal bundle in the projective case, the Voevodsky collapse and the corresponding co-evaluation map})
\label{virt.norm.bundle.proj} \index{Virtual normal bundle} \index{Voevodsky collapse}
We let  $\nu_{\rmX}= \cN \oplus \rmV_{|\rmX}$ and call it {\it the virtual normal bundle}
to $\rmX$ in $\rmY$. Taking $\rmY=\rmX$, we see that $\nu_{Y}$ has the property that $\pi_{\rmY}^*(\nu_{\rmY})$ is a complement to $\pi_{\rmY}^*(\tau_{\rmY})$ in some
trivial bundle over $\tilde \rmY$. 
\vskip .2cm
Clearly ${\rm Th}(\nu_{\rmX}) \simeq \rmV/\rmV-\rmX$ where $\rmX$ is imbedded in $\rmV$ by the composite imbedding $\rmX \ra \rmY {\overset {0-section} \ra} {\rm E}(\rmV)$. Therefore, one obtains
a collapse map ${\rm Th}(\rmV) = \rmV/\rmV-\rmY \ra \rmV/\rmV-\rmX \simeq {\rm Th}(\nu_{\rmX})$. Composing with the collapse $\rmV: \T^{\rm n} \ra {\rm Th}(\rmV)$ one obtains the collapse $\rmV_{\rmX}:\T^{\rm n} \ra {\rm Th}(\nu_{\rmX})$. Composing with the diagonal map $\Delta$ (considered above), one obtains a map $c:\T^{\rm n} \ra \rm\rmX_+ \wedge {\rm Th}(\nu_{\rmX})$. The main result we need is that this map is
indeed a co-evaluation map in the sense of \cite[1.3 Theorem]{DP}, so that ${\Sigma^{\infty} \wedge {\T}}^{\rm -n} \wedge{\rm Th}(\nu_{\rmX})$ is indeed a Spanier-Whitehead dual of 
 ${\Sigma^{\infty}_{\T}}\rm\rmX_+$. This is rather well-known
 by now, as discussed for example in  \cite{Hu-Kr}. 
\end{definition}


\vskip .4cm

\end{document}